\numberwithin{equation}{section}
\newtheorem{theorem}{Theorem}[section] 
\newtheorem{lemma}{Lemma}[section] 
\newtheorem{proposition}{Proposition}[section]
\newcommand*{\C}{\mathbb{C}}
\newcommand*{\R}{\mathbb{R}}
\newcommand*{\Q}{\mathbb{Q}}
\newcommand*{\Z}{\mathbb{Z}}
\newcommand*{\N}{\mathbb{N}}
\newcommand{\comment}[1]{}
\title[Hamiltonians arising from $L$-functions in the Selberg class]%
      {Hamiltonians arising from \\ $L$-functions in the Selberg class} 
\author[M. Suzuki]{Masatoshi Suzuki}
\subjclass[2010]{11M41, 34A55, 11M26}
\keywords{}
\begin{abstract}
%
We establish a new equivalent 
condition for the Grand Riemann Hypothesis 
for $L$-functions in a wide subclass of the Selberg class
in terms of canonical systems of differential equations. 
A canonical system is determined by a real symmetric matrix-valued function called a Hamiltonian.  
To establish the equivalent condition, 
we use an inverse problem for canonical systems of a special type. 
\end{abstract}
\begin{document}

%
%
\section{Introduction} \label{section_1} 
%
%

The Riemann Hypothesis (RH) asserts that all nontrivial zeros of the Riemann zeta-function $\zeta(s)$ lie on the critical line $\Re(s)=1/2$, 
and it has been generalized to wider classes of zeta-like functions. 
Especially, an analogue of RH for $L$-functions in the Selberg class is often called the Grand Riemann Hypothesis (GRH)\footnote{
The abbreviation GRH is often used for the Generalized Riemann Hypothesis in literature 
but we use it for the Grand Riemann Hypothesis throughout this paper.}. 

The present paper aims to establish a new equivalent condition for 
GRH for $L$-functions in a wide subclass of the Selberg class 
in terms of canonical systems by using general results in the preliminary paper \cite{Su19_1} 
studying an inverse problem of canonical systems of a special type. 
We explain the relation by dealing with the case of the Riemann zeta function in the introduction. 
The present study was mainly stimulated by the works of 
Lagarias~\cite{La05, La06, La09} 
(and also Burnol \cite{Burnol2002, Burnol2004, Burnol2007, Burnol2011}
as well as \cite{Su19_1}). 
\medskip

The Riemann xi-function 
\begin{equation*}
\xi(s)=\frac{1}{2}s(s-1)\pi^{-s/2}\Gamma\left(\frac{s}{2}\right)\zeta(s)
\end{equation*}
is an entire function taking real-values on the critical line such that 
the zeros coincide with nontrivial zeros of $\zeta(s)$. 
Therefore RH is equivalent that all zeros of 
the entire function $\xi(s)$ lie on the critical line.  
Noting this, we start with considering an additive decomposition 
\begin{equation} \label{s101}
\xi\Bigl(\frac{1}{2}-iz\Bigr) = \frac{1}{2}(E(z)+E^\sharp(z))
\end{equation}
by an entire function $E$, where 
$i=\sqrt{-1}$, $F^\sharp(z)=\overline{F(\bar{z})}$ 
for an entire function $F$ 
and the bar stands for the complex conjugate. 
It is easily confirmed that \eqref{s101} holds for 
\begin{equation} \label{s102}
E_\xi(z)= 
\left( 
\xi(s) + \left.
\frac{d}{ds}\xi(s)\right)\right|_{\displaystyle{s=\tfrac{1}{2}-iz}}. 
\end{equation}
by the functional equations $\xi(s)=\xi(1-s)$ and $\xi(s)=\xi^\sharp(s)$
and thus \eqref{s101}  also holds for infinitely many entire functions $E=E_\xi+F$, 
where 
$F$ is an entire function satisfying $F^\sharp=-F$. 

The advantage of the decomposition \eqref{s101} stands on 
the theory of the {\it Hermite--Biehler class} of entire functions. 
We denote by $\overline{\mathbb{HB}}$ the set of all entire functions satisfying 
\begin{equation} \label{s103}
|E^\sharp(z)| < |E(z)|  \quad \text{for all $z \in \C_+$},
\end{equation}
where $\C_+=\{z=u+iv\,|\, u,v \in \R,\, v>0\}$. 
The Hermite--Biehler class $\mathbb{HB}$ 
consists of all $E \in \overline{\mathbb{HB}}$ having no real zeros. 
If $E \in \overline{\mathbb{HB}}$, the two entire functions 
\begin{equation} \label{s104}
A(z) := \frac{1}{2}(E(z)+E^\sharp(z)) \quad \text{and} \quad 
B(z) := \frac{i}{2}(E(z)-E^\sharp(z)) 
\end{equation}
have only real zeros. 
Further, all (real) zeros of $A$ and $B$ are simple 
if $E \in \mathbb{HB}$ (\cite{MR0114002}). 
Therefore, 
the existence of $E \in \mathbb{HB}$ satisfying \eqref{s101} 
implies that RH holds together with the Simplicity Conjecture (SC) 
which asserts that all nontrivial zeros of $\zeta(s)$ are simple. 
Conversely, 
there exists $E \in \mathbb{HB}$ satisfying \eqref{s101} if we assume that RH and SC hold. 
In fact, $E_\xi$ of \eqref{s102} belongs to $\mathbb{HB}$ under RH and SC (\cite{La06}). 
\smallskip

The above discussion suggests the following strategy to the proof of RH and SC: 
first, find an entire function $E$ satisfying \eqref{s101};  
second, prove that $E$ belongs to $\mathbb{HB}$. 
Then these two conditions conclude RH and SC.  
More simply, 
we may start from the second step by using $E_\xi$ of \eqref{s102}. 
The obvious difficulty with this strategy is in the second step, 
because the reason why an entire function $E$ satisfying \eqref{s101} (or $E_\xi$) 
belongs to $\mathbb{HB}$ is not known other than RH and SC, 
so we do not know how to prove $E \in \mathbb{HB}$ without RH and SC. 
In this paper, we propose a strategy to prove $E \in \mathbb{HB}$ 
with the help of de Branges' theory of canonical systems 
as with Lagarias \cite{La05, La06, La09} 
in which the applicability of the theory of de Branges  
to the study of the zeros of $L$-functions is suggested. 
\medskip

To start with, 
we explain how canonical systems generate functions of the class $\overline{\mathbb{HB}}$. 
A $2 \times 2$ real symmetric matrix-valued function $H$ defined on     
$I=[t_0,t_1)$ $(-\infty<t_0<t_1 \leq \infty)$ is called a {\it Hamiltonian} 
if  $H(t)$ is positive semidefinite for almost every $t \in I$, 
$H\not\equiv 0$ on any subset of $I$ with positive Lebesgue measure, 
and $H$ is locally integrable on $I$ with respect to the Lebesgue measure. 
The first-order system 
\begin{equation} \label{s105} 
-\frac{d}{dt}
\begin{bmatrix}
A(t,z) \\ B(t,z)
\end{bmatrix}
= z 
\begin{bmatrix}
0 & -1 \\ 1 & 0
\end{bmatrix}
H(t)
\begin{bmatrix}
A(t,z) \\ B(t,z)
\end{bmatrix}, \quad 
z \in \C
\end{equation}
associated with a Hamiltonian $H$ on $I$ is called a {\it canonical system} on $I$ 
(See \cite[Section 1]{Su19_1} for a difference with the usual definition).  
For a solution ${}^{\rm t}[A(t,z),B(t,z)]$ of a canonical system, we define  
\begin{equation} \label{s106}
J(t;z,w) 
 := \frac{\overline{A(t,z)}B(t,w)-A(t,w)\overline{B(t,z)}}{\pi(w-\bar{z})}.
\end{equation}
Then, $E(t,z):=A(t,z)-iB(t,z)$ is an entire function of $\overline{\mathbb{HB}}$ for every  $t \in I$ 
if we suppose that 
$\lim_{t \to t_1}J(t;z,z)=0$ for every $z \in \C_+$, 
unless $\det H(t)=0$ for almost every $t \in I$ 
(see \cite[Proposition 2.4]{Su19_1} for details). 
Conversely, for $E \in \overline{\mathbb{HB}}$, 
there exists a Hamiltonian $H$ defined on a (possibly unbounded) interval $I=[t_0,t_1)$ 
such that for a unique solution ${}^{\rm t}[A(t,z),B(t,z)]$ of  
\eqref{s105} satisfying  $E(z)=A(t_0,z)-iB(t_0,z)$, 
the function 
$E(t,z)=A(t,z)-iB(t,z)$ belongs to $\overline{\mathbb{HB}}$ for every $t \in I$, 
and $J(t;z,w)$ defined by \eqref{s106} satisfies 
$\lim_{t \to t_1}J(t;z,z)=0$ for every $z \in \C_+$ 
(\cite[Theorem 40]{MR0229011}). 
\smallskip

From the above, if we assume that RH and SC hold, 
there exists a Hamiltonian $H_\xi$ defined on some interval $I$ 
such that $E_\xi$ of \eqref{s102} is recovered from a solution of the canonical system 
associated with $H_\xi$. 
Using the conjectural Hamiltonian $H_\xi$, 
the above naive strategy to the proof of RH 
is now refined as follows: 
\begin{enumerate}
\item[(1-1)] Constructing the Hamiltonian $H_\xi$ on $I=[t_0,t_1)$ without RH; 
\item[(1-2)] Constructing the solution ${}^{\rm t}[A_\xi(t,z),B_\xi(t,z)]$ 
of the canonical system on $I$ associated with $H_\xi$ 
satisfying $E_\xi(z)=A_\xi(t_0,z)-iB_\xi(t_0,z)$; 
\item[(1-3)] Showing that $\lim_{t \to t_1}J_\xi(t;z,z)=0$ for every $z \in \C_+$, 
where $J_\xi(t;z,w)$ is the function defined by \eqref{s106} for ${}^{\rm t}[A_\xi(t,z),B_\xi(t,z)]$. 
\end{enumerate}

We can conclude that $E_\xi$ belongs to $\overline{\mathbb{HB}}$
if the above three steps are completed, 
and hence all zeros of $\xi(1/2-iz)=A_\xi(z)=(E_\xi(z)+E_\xi^\sharp(z))/2$ 
are real; this is nothing but RH. 
However, this approach faces a serious obstacle from the first step, 
because the inverse problem of canonical systems, 
which explicitly recovers $H$ from a given $E \in \mathbb{HB}$, 
is difficult in general. 
As mentioned in the introduction to \cite{Su19_1}, 
there are several methods for constructing a Hamiltonian $H$ 
for a large class of $E \in \overline{\mathbb{HB}}$, 
but they are inconvenient to use in the above strategy, 
because such methods are usually not applicable 
if it is not known whether $E \in \overline{\mathbb{HB}}$. 

In order to avoid such obstacles, we consider the family of entire functions
\begin{equation*}
E_\xi^\omega(z) := \xi(\tfrac{1}{2}+\omega-iz), \quad \omega >0 
\end{equation*}
instead of the single function $E_\xi$. 
(Note that $E_\xi\not=E_\xi^0$, but $E_\xi^\omega$ for small $\omega>0$ is similar to $E_\xi$ 
in the sense that 
$E_\xi^\omega(z)= \xi(\tfrac{1}{2}+\omega-iz) + \omega\,\xi'(\tfrac{1}{2}+\omega-iz) +O(\omega^2)$ 
for small $\omega>0$ if $z$ in a compact set, 
where ``$O$'' is the Landau symbol.) 
Then we find that a necessary and sufficient condition for RH (not require SC) 
is that $E_\xi^\omega \in \mathbb{HB}$ for every $\omega>0$ 
(Propositions \ref{prop_2_1} and \ref{prop_2_2}). 
In particular, there exists a Hamiltonian $H_\xi^\omega$ for every $\omega>0$ under RH. 
Therefore, RH is proved by completing the following three steps for every $\omega>0$: 
\begin{enumerate}
\item[(2-1)] Constructing the Hamiltonian $H_\xi^\omega$ on $I=[t_0,t_1)$ without RH; 
\item[(2-2)] Constructing the solution ${}^{\rm t}[A_\xi^\omega(t,z),B_\xi^\omega(t,z)]$ of the canonical system on $I$ associated with $H_\xi^\omega$ 
satisfying $E_\xi^\omega(z)=A_\xi^\omega(t_0,z)-iB_\xi^\omega(t_0,z)$; 
\item[(2-3)] Showing that $\lim_{t \to t_1}J_\xi^\omega(t;z,z)=0$ for all $z \in \C_+$. 
\end{enumerate}

There are two advantages to the second strategy. 
The first advantage is that 
a Hamiltonian $H_\xi^\omega$ on $I=[0,\infty)$ and 
a solution ${}^{\rm t}[A_\xi^\omega(t,z),B_\xi^\omega(t,z)]$ 
of the associate canonical system 
are explicitly constructed in \cite{Su12} 
under the restriction $\omega>1$ 
by applying the method of Burnol \cite{Burnol2011} introduced for the study of the Hankel transform. 
The second  advantage is the avoiding of SC 
and the central zero, that is, multiple zeros on the critical line 
and the zero at the central point $s=1/2$ are allowed to $\xi$ in the second strategy. 
This point is important to generalize the above strategy to the other $L$-functions, 
because they often have a multiple zero at the central point $s=1/2$.  

In \cite{Su12}, $H_\xi^\omega$ and ${}^{\rm t}[A_\xi^\omega(t,z),B_\xi^\omega(t,z)]$  
are constructed by using solutions $\varphi_t^\pm$ of the integral equations
%
$
\varphi_t^\pm(x) \pm \int_{-\infty}^{t} K_\xi^\omega(x+y) \varphi_t^\pm(y) \, dy = K_\xi^\omega(x+t), 
$
%
where $K_\xi^\omega$ is the kernel defined by 
\begin{equation*}
K_\xi^\omega(x) = \frac{1}{2\pi} \int_{\Im(z)=c} \frac{(E_\xi^\omega)^\sharp(z)}{E_\xi^\omega(z)} e^{-izx} \, dz  
\end{equation*}
for large $c>0$. 
The behavior of ${}^{\rm t}[A_\xi^\omega(t,z),B_\xi^\omega(t,z)]$ at $t = \infty$ and its role 
in the proof of $E_\xi^\omega \in \mathbb{HB}$ 
were not studied in \cite{Su12}.   
However, if the construction of $H_\xi^\omega$ 
and ${}^{\rm t}[A_\xi^\omega(t,z),B_\xi^\omega(t,z)]$ is extended to $0<\omega \leq 1$ 
together with an additional reasonable result 
on the behavior of  ${}^{\rm t}[A_\xi^\omega(t,z),B_\xi^\omega(t,z)]$ at $t=\infty$,  
we obtain $E_\xi^\omega \in \mathbb{HB}$ for every $\omega>0$, 
which implies RH. 
Unfortunately, there were several technical difficulties in \cite{Su12} 
to extend the construction of $H_\xi^\omega$ and 
${}^{\rm t}[A_\xi^\omega(t,z),B_\xi^\omega(t,z)]$  to $0<\omega \leq 1$. 
For instance, $K_L^\omega$ is far from continuous functions and $L^2$-functions if $\omega>0$ is small, 
and this fact is a serious obstacle for the construction in \cite{Su12}. 
\medskip

In this paper, we resolve the above technical difficulties 
by introducing the additional discrete parameter $\nu$:
\begin{equation*}
E_\xi^{\omega,\nu}(z) = \xi(\tfrac{1}{2}+\omega-iz)^\nu, \quad \omega \in \R_{>0}, \quad \nu \in \Z_{>0}.
\end{equation*}
The parameter $\nu$ does not affect to study 
whether $E_{L}^{\omega,\nu} \in \mathbb{HB}$ by definition of $\mathbb{HB}$, 
but plays an important role in constructing 
the conjectural Hamiltonian $H_\xi^{\omega,\nu}$ 
obtained by applying the method in \cite{Su19_1} to $E_\xi^{\omega,\nu}$. 
We will show in Section \ref{section_4} 
that $E_L^{\omega,\nu}$ satisfies all conditions required for the method in \cite{Su19_1} 
due to the existence of parameter $\nu$. 
In this way, a large part of (2-1) and (2-2) are achieved successfully for each $\omega>0$. 
To study (2-3), we need more preparation, as described in Section \ref{section_2}.
\medskip

Summarizing the above discussion, we will obtain an equivalent condition for RH 
in terms of canonical systems associated with Hamiltonians. 
This framework to establish the equivalent condition for RH 
applies to more general zeta and $L$-functions. 
We apply it to $L$-functions in the Selberg class which was introduced in Selberg \cite{Sel} 
together with a sophisticated consideration about the question of what is an $L$-function. 
Then we obtain an equivalent condition for an analogue of RH 
for $L$-functions in the Selberg class as in Section \ref{section_2}. 
This is the goal of this paper. 
\medskip

Before concluding the introduction, we comment on the Hilbert-P{\'o}lya conjecture, 
a conjectural possible approach to RH. 
It claims that the imaginary parts of the nontrivial zeros of $\zeta(s)$ 
are eigenvalues of some unbounded self-adjoint operator $\mathsf{D}$ acting on a Hilbert space $\mathcal{H}$. 
The Montgomery-Odlyzko conjecture on the vertical distribution of the nontrivial zeros of $\zeta(s)$ 
and the resemblance between the Weil explicit formula and the Selberg trace formula 
are strong evidence to the Hilbert-P{\'o}lya conjecture. 
No pair $(\mathcal{H},\mathsf{D})$ of space and operator had been found, 
although the conjectural pair were suggested by several authors. 
Among them, the idea of Connes~\cite{Connes99} for the conjectural Hilbert-P{\'o}lya pair $(\mathcal{H},\mathsf{D})$ 
is very attractive in the sense that 
it stands on the local--global principle in number theory (adeles and ideles), 
it enables us to understand the Weil explicit formula as a trace formula,  
and it is stated not only for $\zeta(s)$ but also Dedekind zeta-functions and Hecke $L$-functions. 
In addition, his idea is compatible with the Berry--Keating model~\cite{BK99} 
which  is an attempt to explain RH by using a physical model. 

If $E_\xi^{\omega,\nu} \in \mathbb{HB}$, we can construct a family of pairs 
$\{(\mathcal{H}^{\omega,\nu(\omega)},\mathsf{D}^{\omega,\nu(\omega)})\}_{\omega>0}$ 
of Hilbert spaces and self-adjoint operators. 
The family may be regarded as a possible realization 
of Connes' Hilbert-P{\'o}lya pair $(\mathcal{H},\mathsf{D})$ 
by allowing the perturbation parameter $\omega$. 
This topic will be treated more precisely in Section \ref{section_8}.  
\medskip

The paper is organized as follows. 
In Section \ref{section_2}, we state the main results 
Theorem \ref{thm_1}, Theorem \ref{thm_2}, Theorem \ref{thm_3} and Theorem \ref{thm_4} 
after a small preparation of notation. 
The first two theorems 
associate the above inverse problem of canonical systems to GRH. 
The third theorem is related to the necessity of GRH 
in the equivalent condition in the fourth theorem. 
The fourth theorem is the goal of this paper which is an equivalent condition for GRH 
in terms of the inverse problem of canonical systems. 
In Section \ref{section_3}, 
we briefly review the method of \cite{Su19_1} 
for the inverse problem of canonical systems to use it in the proof of the main results.
In Section \ref{section_4}, 
we prove that \cite[Theorem 1.1]{Su19_1} applies to $E_\xi^{\omega,\nu}$ 
and its generalization $E_L^{\omega,\nu}$ to $L$-functions 
in the Selberg class. 
The proof of Theorem \ref{thm_1}  is a part of this process. 
Then, we obtain Theorem \ref{thm_2} by applying \cite[Theorem 1.1]{Su19_1}
to $E_L^{\omega,\nu}$. 
In Section \ref{section_6}, we prove Theorem \ref{thm_3} 
by using the theory of de Branges spaces. 
In Section \ref{section_7}, we prove Theorem \ref{thm_4} by combining several results in former sections. 
In Section \ref{section_8}, we comment on the Hilbert-P{\'o}lya conjecture and Connes' approach about it from the viewpoint of the theory of canonical systems. 
In Section \ref{section_10}, we give miscellaneous remarks on the results and contents of this paper.

Basically, we attempt as much as possible to prove the main results in Section \ref{section_2} 
by applying general results to $L$-functions in the Selberg class 
for the convenience of applications to other class of $L$-functions. 
\medskip

\noindent
{\bf Acknowledgments}~
This work was supported by JSPS KAKENHI Grant Number JP25800007 and JP17K05163.  
Also, this work was partially supported 
by French-Japanese Projects ``Zeta Functions of Several Variables and  Applications''
in Japan-France Research Cooperative Program supported by JSPS and CNRS. 

%
%
\section{Main Results} \label{section_2}
%
%

%
%
\subsection{Selberg class and GRH}      
%
%
Let $s=\sigma+it$ be the complex variable.  
The Selberg class $\mathcal{S}$ consists of the Dirichlet series 
\begin{equation} \label{s201}
L(s) = \sum_{n=1}^{\infty} \frac{a_L(n)}{n^s}
\end{equation}
satisfying the following five axioms\footnote{
As a matter of fact, (S1) is unnecessary to define $\mathcal{S}$ 
because it is derived from (S4). 
But we put it into the axiom of $\mathcal{S}$ according to other literature. 
A positive reason is that it is convenient to define the extended Selberg class 
which is the class of Dirichlet series satisfying (S1)$\sim$(S3). 
}: 
\begin{enumerate}
\item[(S1)] The Dirichlet series \eqref{s201} converges absolutely if $\sigma>1$. 
\item[(S2)] {\bf Analytic continuation} -- There exists an integer $m \geq 0$ 
such that $(s - 1)^mL(s)$ extends to an entire function of finite order.
\item[(S3)] {\bf Functional equation} -- $L$ satisfies the functional equation
\begin{equation*}
\Lambda_L(s)=\epsilon_L \Lambda_L^\sharp(1-s),
\end{equation*}
where
\begin{equation*}
\Lambda_L(s) = Q^s \prod_{j=1}^r \Gamma(\lambda_j s + \mu_j) \cdot L(s) = \gamma_L(s)\cdot L(s), 
\end{equation*}
$\Gamma$ is the gamma function 
and $r \geq 0$, $Q > 0$, $\lambda_j > 0$, $\mu_j \in \C$ with $\Re(\mu_j)\geq 0$, 
$\epsilon_L \in \C$ with  $|\epsilon_L| = 1$ are parameters depending on $L$. 
\item[(S4)] {\bf Ramanujan conjecture} -- For every $\varepsilon > 0$, $a_L(n) \ll_\varepsilon n^\varepsilon$. 
\item[(S5)] {\bf Euler product} -- For every sufficiently large $\sigma$,  
\[
\log L(s) = \sum_{n=1}^{\infty} \frac{b_L(n)}{n^s},  
\]
where $b_L(n) = 0$ unless $n = p^m$ with $m \geq 1$, and $b_L(n) \ll n^\theta$ for some $\theta < 1/2$.
\end{enumerate}
The Riemann zeta-function and Dirichlet $L$-functions 
associated with primitive Dirichlet characters are typical members of the Selberg class $\mathcal{S}$. 
As with these examples, 
it is conjectured that all major zeta- and $L$-functions appearing in number theory, such as automorphic $L$-functions and Artin $L$-functions, 
are members of $\mathcal{S}$. 
Considering this conjecture, 
$\mathcal{S}$ is a proper class of $L$-functions 
in studying the analogue of RH for number theoretic $L$-functions. 
See the survey article~\cite{Perelli2005} of A. Perelli 
for an overview of results, conjectures, and problems relating to the Selberg class.  
\medskip

We define the degree $d_L$ of $L \in \mathcal{S}$ by 
$d_L = 2 \sum_{j=1}^{r} \lambda_j$, 
where $\lambda_j$ are numbers in (S3). 
From (S5), we have $a_L(1) = 1$ and find that 
coefficients $a_L(n)$ define a multiplicative arithmetic function. 
From (S3) and (S5), 
$L \in \mathcal{S}$ has no zeros outside the critical strip $0 \leq \sigma \leq 1$ 
except for zeros in the half-plane $\sigma \leq 0$ located at poles of the involved gamma factors. 
The zeros lie in the critical strip are called the nontrivial zeros. 
The nontrivial zeros are infinitely many unless $L \equiv 1$ 
and coincide with the zeros of the entire function\footnote{
The quantity $2 \sum_{j=1}^r(\mu_j - \frac{1}{2})$ 
is usually referred to as $\xi$-invariant and is often written as $\xi_L$ in the theory of the Selberg class, 
but we do not use the letter $\xi$ for the $\xi$-invariant of $L \in \mathcal{S}$ to avoid confusion.
}
\begin{equation} \label{s202}
\xi_L(s) = s^{m_L}(s-1)^{m_L} \Lambda_L(s)
\end{equation}
of order one, where $m_L$ is the minimal nonnegative integer $m$ in (S2).  
It is conjectured that the analogue of RH holds for all $L$-functions in $\mathcal{S}$: 
\medskip

\noindent
{\bf Grand Riemann Hypothesis (GRH).} For $L \in \mathcal{S}$, $\xi_L(s)\not=0$ unless $\Re(s)=1/2$. 
\medskip

\noindent
For $L \in \mathcal{S}$, we abbreviate to ${\rm GRH}(L)$ 
the assertion that all zeros of $\xi_L(s)$ lie on the critical line $\Re(s)=1/2$. 
\bigskip

The main subject of this paper is 
the study of ${\rm GRH}(L)$ for $L$-functions in the subclass $\mathcal{S}_{\R}$ of $\mathcal{S}$ defined by 
\begin{equation*}
\mathcal{S}_{\R} = \{ L \in \mathcal{S} \,|\, L \not\equiv 1,~L(\R) \subset \R,~\Lambda_L(\R) \subset \R \}. 
\end{equation*}
If $L \in \mathcal{S}_\R$, the gamma factor $\gamma_L$ in (S3) is not an exponential function, 
$\xi_L$ are non-constant entire functions satisfying functional equations
\begin{equation} \label{s203}
\xi_L(s)=\epsilon_L\,\xi_L(1-s) \quad \text{and} \quad \xi_L(s)=\xi_L^\sharp(s), 
\end{equation}
where the root number $\epsilon_L$ must be $1$ or $-1$. 
The second equality of \eqref{s203} means that 
$\xi_L$ is a real entire function, 
which means an entire function $F$ satisfying $F=F^\sharp$.

%
%
\subsection{Auxiliary functions for GRH} 
%
%
Let $z=u+iv$ be the complex variables relating to the variable $s$ by $s=1/2-iz$. 
To work on ${\rm GRH}(L)$ for $L \in \mathcal{S}_{\R}$, 
we introduce the two parameter family of entire functions
\begin{equation} \label{s204}
E_{L}^{\omega,\nu}(z) := \xi_L(\tfrac{1}{2}+\omega-iz)^\nu = \xi_L(s+\omega)^\nu 
\end{equation}
parametrized by $\omega \in \R_{>0}$ and $\nu \in \Z_{>0}$. 
Then, 
\begin{equation*}
(E_{L}^{\omega,\nu})^\sharp(z) = E_{L}^{\omega,\nu}(-z)
\end{equation*}
by the second equation in \eqref{s203}, and then 
$(E_{L}^{\omega,\nu})^\sharp(z) = \epsilon_L^\nu \, \xi_L(\tfrac{1}{2}-\omega-iz)^\nu$ 
by the first equation in \eqref{s203}. 
Therefore, real entire functions $A_{L}^{\omega,\nu}$ and $B_{L}^{\omega,\nu}$ 
defined by \eqref{s104} for $E=E_{L}^{\omega,\nu}$ 
are even and odd, respectively. 
The following proposition is trivial 
from definition \eqref{s204} and functional equations \eqref{s203}. 

\begin{proposition} \label{prop_2_1}
For $L \in \mathcal{S}_\R$,  
${\rm GRH}(L)$ holds if $E_L^{\omega,1} \in \overline{\mathbb{HB}}$ for all $\omega>0$. 
\end{proposition}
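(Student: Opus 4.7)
The plan is a short contrapositive argument. Suppose $\mathrm{GRH}(L)$ fails, so that $\xi_L$ has a zero $s_0 = \sigma_0 + it_0$ with $\sigma_0 \neq 1/2$. Using the symmetries recorded in \eqref{s203} -- namely $\xi_L = \xi_L^\sharp$, which gives conjugate symmetry of the zero set, and $\xi_L(s) = \epsilon_L\,\xi_L(1-s)$, which reflects zeros through $s = 1/2$ -- I may assume without loss of generality that $\sigma_0 > 1/2$.

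The substitution $s = 1/2 + \omega - iz$ used in the definition \eqref{s204} of $E_L^{\omega,1}$ then sends the zero at $s_0$ to a zero of $E_L^{\omega,1}$ at
\[
z_0 \;=\; i\bigl(\tfrac12 + \omega - s_0\bigr) \;=\; -t_0 \,+\, i\,(\sigma_0 - \tfrac12 - \omega).
\]
For any $\omega$ in the nonempty interval $(0,\,\sigma_0 - 1/2)$, this point lies in the open upper half-plane $\C_+$. Thus for every such $\omega$, $E_L^{\omega,1}$ possesses a zero in $\C_+$.

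To close the argument I invoke the elementary observation that any $E \in \overline{\mathbb{HB}}$ is automatically zero-free on $\C_+$: at a putative zero $z_0 \in \C_+$ of $E$, the defining strict inequality \eqref{s103} would read $|E^\sharp(z_0)| < 0$, which is absurd. Consequently, $E_L^{\omega,1} \notin \overline{\mathbb{HB}}$ for all sufficiently small $\omega > 0$, contradicting the hypothesis of the proposition. Hence $\sigma_0 = 1/2$ for every zero $s_0$ of $\xi_L$.

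There is effectively no obstacle: the proposition is a direct unwinding of the affine change of variables $s \leftrightarrow z$ together with the standard fact that members of $\overline{\mathbb{HB}}$ have no zeros in $\C_+$. The only points that deserve a moment of care are the reduction to $\sigma_0 > 1/2$ via \eqref{s203} and the verification that the admissible window $\omega \in (0,\,\sigma_0 - 1/2)$ is nonempty, both of which are immediate. This explains the author's remark that the statement is ``trivial from definition \eqref{s204} and functional equations \eqref{s203}''.
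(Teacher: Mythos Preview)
Your proof is correct and is precisely the unwinding the paper has in mind when it declares the proposition ``trivial from definition \eqref{s204} and functional equations \eqref{s203}''; the paper gives no explicit proof beyond that remark. The computation of $z_0$, the reduction to $\sigma_0>1/2$ via the functional equation, and the observation that \eqref{s103} forbids zeros of $E$ in $\C_+$ are all exactly what is needed.
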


As easily found from \eqref{s204} and definition of $\overline{\mathbb{HB}}$, 
for fixed $\omega>0$, 
if $E_L^{\omega,\nu} \in \overline{\mathbb{HB}}$ for some $\nu \in \Z_{>0}$, 
then $E_L^{\omega,\nu} \in \overline{\mathbb{HB}}$ for arbitrary $\nu \in \Z_{>0}$. 
Therefore, $E_L^{\omega,1}$ in Proposition \ref{prop_2_1} can be replaced by $E_L^{\omega,\nu_\omega}$ 
defined for positive integers $\nu_\omega$ indexed by $\omega>0$.  
On the other hand, 
$\overline{\mathbb{HB}}$ in the statement 
can be replaced by $\mathbb{HB}$ 
without the changing of the meaning of the statement, 
although $E_L^{\omega,1} \in \overline{\mathbb{HB}}$ 
is different from $E_L^{\omega,1} \in \mathbb{HB}$ for individual $\omega>0$. 

As the converse of Proposition \ref{prop_2_1}, we have the following.

\begin{proposition} \label{prop_2_2}
Let $L \in \mathcal{S}_\R$ and $\nu \in \Z_{>0}$. 
Then, $E_{L}^{\omega,\nu}$ belongs to $\mathbb{HB}$ 
for every $\omega>1/2$ unconditionally and for every $0<\omega \leq 1/2$ under ${\rm GRH}(L)$. 
\end{proposition}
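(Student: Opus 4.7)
The plan is to reduce to the case $\nu=1$, verify that $E:=E_L^{\omega,1}$ has no zeros in $\overline{\C_+}$ (which yields the ``no real zeros'' half of $\mathbb{HB}$), and then establish $|E^\sharp(z)|<|E(z)|$ on $\C_+$ by a direct Hadamard-product computation. The reduction is immediate: $E_L^{\omega,\nu}=(E_L^{\omega,1})^\nu$, $(F^\nu)^\sharp=(F^\sharp)^\nu$, the strict inequality $|F^\sharp|<|F|$ is preserved under positive integer powers, and real zeros of $F^\nu$ coincide with those of $F$. Thus $E_L^{\omega,\nu}\in\mathbb{HB}$ iff $E_L^{\omega,1}\in\mathbb{HB}$.

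A zero $z=u+iv$ of $E$ corresponds to a zero $\rho$ of $\xi_L$ with $\Re\rho=\tfrac12+\omega+v$. Unconditionally $\Re\rho\leq 1$, which is incompatible with $\Re\rho\geq\tfrac12+\omega>1$ when $\omega>\tfrac12$; under ${\rm GRH}(L)$ we have $\Re\rho=\tfrac12<\tfrac12+\omega$ for any $\omega>0$. Hence $E$ has no zeros in $\overline{\C_+}$.

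For the modulus inequality, real-entireness of $\xi_L$ combined with the functional equation gives $|E(z)|=|\xi_L(\sigma_++i\tau)|$ and $|E^\sharp(z)|=|\xi_L(\sigma_-+i\tau)|$ for $z=u+iv\in\C_+$, with $\sigma_\pm=\tfrac12+\omega\pm v$ and $\tau=-u$. I would then invoke the symmetric Hadamard factorization
\[
\xi_L(s)=C(s-\tfrac12)^m\prod_{[\rho]}\Bigl(1-\frac{(s-\tfrac12)^2}{(\rho-\tfrac12)^2}\Bigr),
\]
obtained from the genus-one Hadamard product by pairing $\rho\leftrightarrow 1-\rho$; here $m\geq 0$ is the order of vanishing of $\xi_L$ at $\tfrac12$ and the product runs over one representative per orbit $\{\rho,1-\rho\}$ with $\rho\neq\tfrac12$. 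The pairing absorbs the Weierstrass factors $e^{s/\rho}e^{s/(1-\rho)}=e^{s/\rho(1-\rho)}$ into a global linear exponent, whose $s$-coefficient is forced to vanish by $\xi_L(s)=\epsilon_L\xi_L(1-s)$, and convergence of the residual product is guaranteed by $\xi_L$ having order $\leq 1$. Writing $W=\omega+i\tau$, $\tilde\rho=\rho-\tfrac12$, $\alpha=\tilde\rho-W$, $\beta=\tilde\rho+W$, the squared ratio takes the form
\[
\frac{|\xi_L(\sigma_++i\tau)|^2}{|\xi_L(\sigma_-+i\tau)|^2}=\Bigl(\frac{|W+v|}{|W-v|}\Bigr)^{\!2m}\prod_{[\rho]}\frac{|\alpha-v|^2\,|\beta+v|^2}{|\alpha+v|^2\,|\beta-v|^2}.
\]

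Using the identity $|X\mp v|^2-|X\pm v|^2=\mp 4v\Re X$, each factor in the infinite product exceeds $1$ precisely when $\Re\alpha<0$ and $\Re\beta>0$, i.e.\ when $|\Re\rho-\tfrac12|<\omega$; this condition holds for every zero $\rho$ under ${\rm GRH}(L)$ (since $|\Re\rho-\tfrac12|=0$) and unconditionally when $\omega>\tfrac12$ (since $|\Re\rho-\tfrac12|\leq\tfrac12<\omega$), matching the two hypotheses exactly. The central factor is $\geq 1$ because $|W+v|^2-|W-v|^2=4\omega v>0$. Convergence of the product is controlled by the expansion $|\alpha-v|^2|\beta+v|^2/(|\alpha+v|^2|\beta-v|^2)=1+8\omega v|\tilde\rho|^{-2}+O(|\tilde\rho|^{-3})$ together with $\sum_{[\rho]}|\tilde\rho|^{-2}<\infty$. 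Hence the product is strictly greater than $1$, giving $|E^\sharp(z)|<|E(z)|$ on $\C_+$ and completing the proof. I expect the only delicate step to be the justification of the symmetric Hadamard factorization — in particular the use of the functional equation to kill the residual linear exponent produced by pairing — while the remaining estimates are purely routine.
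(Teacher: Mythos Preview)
Your proof is correct. The paper's own proof is a two-line appeal to \cite[Theorem~4]{LS} for the inequality $|E^\sharp|<|E|$, together with the same zero-location observation you make; your argument unpacks that black box by carrying out the symmetric Hadamard-product computation directly, which is exactly how such a lemma is proved. Two minor remarks: (i) when you center at $s=\tfrac12$ and pair $\tilde\rho$ with $-\tilde\rho$, the Weierstrass factors $e^{\tilde s/\tilde\rho}e^{-\tilde s/\tilde\rho}$ cancel identically, so the only residual linear exponent is the one already present in the Hadamard form, and it is this that the functional equation kills --- your phrasing ``absorbs into a global linear exponent'' is a slightly roundabout way of saying the same thing; (ii) ``precisely when $\Re\alpha<0$ and $\Re\beta>0$'' should read ``whenever'', since those conditions are sufficient but not necessary for the factor to exceed $1$ --- but sufficiency is all you use. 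The payoff of your route is a self-contained argument that makes transparent why the threshold is $\omega>\tfrac12$ (namely $|\Re\rho-\tfrac12|\le\tfrac12$); the paper's route is shorter but relies on an external reference.
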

\begin{proof} 
First, we suppose that $\omega>1/2$. 
Then $E_L^{\omega,\nu}$ has no real zeros, 
since all zeros of $\xi_L(s)$ lie in the vertical strip $0 \leq \sigma \leq 1$. 
On the other hand, we find that $E_L^{\omega,\nu}$ satisfies \eqref{s103} by applying \cite[Theorem 4]{LS} to $\xi_L(s)^\nu$, 
since $\xi_L(s)$ satisfies \eqref{s203} and has only zeros in the strip $0 \leq \sigma \leq 1$. 
Hence $E_L^{\omega,\nu} \in \mathbb{HB}$. 
The case of $0< \omega  \leq 1/2$ is proved similarly under ${\rm GRH}(L)$. 
\end{proof}

The value $\omega=1/2$ in Proposition \ref{prop_2_2} comes from the trivial zero-free region of $L \in \mathcal{S}$. 
More precise relation between the zero-free region of $L$ 
and the property $E_{L}^{\omega,\nu} \in \mathbb{HB}$ 
will be discussed in Proposition \ref{prop_4_3}. 
\medskip

From Propositions \ref{prop_2_1} and \ref{prop_2_2}, 
the validity of ${\rm GRH}(L)$ for $L \in \mathcal{S}_{\R}$ is equivalent to  
the existence of a section $\R_{>0} \to \R_{>0} \times \Z_{>0};\,\omega \mapsto (\omega,\nu)$ such that 
$E_L^{\omega,\nu} \in \overline{\mathbb{HB}}$ for every $\omega>0$. 
Hence ${\rm GRH}(L)$ will be established 
if the following three steps are completed for every point of a section: 
\begin{enumerate}
\item[(3-1)] Constructing the Hamiltonian $H_L^{\omega,\nu}$ on $I=[t_0,t_1)$ from $E_L^{\omega, \nu}$ without ${\rm GRH}(L)$; 
\item[(3-2)] Constructing the solution ${}^{\rm t}[A_L^{\omega,\nu}(t,z),B_L^{\omega,\nu}(t,z)]$ 
of the canonical system on $I$ associated with $H_L^{\omega,\nu}$ 
satisfying $E_L^{\omega,\nu}(z)=A_L^{\omega,\nu}(t_0,z)-iB_L^{\omega,\nu}(t_0,z)$; 
\item[(3-3)] Showing that $\lim_{t \to t_1}J_L^{\omega,\nu}(t;z,z)=0$ for all $z \in \C_+$, 
where $J_L^{\omega,\nu}(t;z,w)$ is the function defined by \eqref{s106} 
for ${}^{\rm t}[A_L^{\omega,\nu}(t,z),B_L^{\omega,\nu}(t,z)]$. 
\end{enumerate} 
The main results stated below concern each step in (3-1)$\sim$(3-3). 

%
%
\subsection{Results on (3-1)}
%
%

We define the function $K_L^{\omega,\nu}$ on the real line by 
\begin{equation} \label{s205}
\Theta_L^{\omega,\nu}(z) 
:= 
\frac{(E_L^{\omega,\nu})^\sharp(z)}{E_L^{\omega,\nu}(z)}
= \frac{E_L^{\omega,\nu}(-z)}{E_L^{\omega,\nu}(z)} 
= \epsilon_L^\nu \, \frac{\xi_L(\tfrac{1}{2}-\omega-iz)^\nu}{\xi_L(\tfrac{1}{2}+\omega-iz)^\nu}
\end{equation}
and 
\begin{equation} \label{s206}
K_L^{\omega,\nu}(x)
:=
\frac{1}{2\pi}\int_{-\infty}^{\infty}  \Theta_L^{\omega,\nu}(u+iv)\, e^{-ix(u+iv)} \, du
\end{equation}
for $L \in \mathcal{S}_\R$ and $(\omega,\nu) \in \R_{>0}\times\Z_{>0}$.  
The integral on the right-hand side of \eqref{s206} converges absolutely if $v>0$ is sufficiently large, 
and we have 
\begin{equation} \label{s207}
K_L^{\omega,\nu}(x) 
= \epsilon_L^{\nu} \sum_{n=1}^{\lfloor \exp(x) \rfloor} 
\frac{q_L^{\omega,\nu}(n)}{\sqrt{n}} \, 
G_L^{\omega, \nu}(x-\log n)
\end{equation}
for $x > 0$ and $K_L^{\omega,\nu}(x) = 0$ for $x<0$,  
where $\epsilon_L$ is the root number in \eqref{s203}, 
$q_L^{\omega,\nu}(n)$ is an arithmetic function determined by the Dirichlet coefficients $a_L(n)$ (non-archimedean information), 
and $G_L^{\omega,\nu}$ is a certain explicit real-valued function having support in $[0,\infty)$ 
determined by the gamma factor $\gamma_L$ (archimedean information). 

We find that $K_L^{\omega,\nu}$ is a continuous function 
if $(\omega,\nu) \in \R_{>0} \times \Z_{>0}$ satisfies 
\begin{equation} \label{s208}
\nu \omega d_L >1, 
\end{equation} 
where $d_L$ is the degree of $L \in \mathcal{S}_{\R}$ (Proposition \ref{prop_4_1}). 
This condition for $(\omega,\nu)$ is technical but essential to the following construction of $H_L^{\omega,\nu}$.  
If $(\omega,\nu) \in \R_{>0} \times \Z_{>0}$ satisfies \eqref{s208}, 
\begin{equation} \label{s209}
{\mathsf K}_L^{\omega,\nu}[t]:\,f(x) \mapsto 
{\mathbf 1}_{(-\infty,t)}(x)
\, \int_{-\infty}^{t} K_L^{\omega,\nu}(x+y) \, f(y) \, dy
\end{equation} 
defines a bounded operator on $L^2(-\infty,t)$ for every $t \in \R$, 
where ${\mathbf 1}_A$ is the characteristic function of a set $A$.  
The study of ${\mathsf K}_L^{\omega,\nu}[t]$ yields a canonical system as follows. 
\medskip

\begin{theorem} \label{thm_1}
Let $L \in \mathcal{S}_{\R}$ 
and $(\omega,\nu) \in \R_{>0} \times \Z_{>0}$ be such that \eqref{s208} holds. 
Then, for every $t > 0$, \eqref{s209} defines a Hilbert-Schmidt type self-adjoint operator on $L^2(-\infty,t)$ 
having a continuous kernel,   
and ${\mathsf K}_L^{\omega,\nu}[t]=0$ for $t \leq 0$. 
Moreover, there exists $\tau=\tau(L;\omega,\nu)>0$ such that  
both $\pm 1$ are not the eigenvalues of $\mathsf{K}_L^{\omega,\nu}[t]$ 
for every $t \in [0,\tau)$. In particular, 
the Fredholm determinant 
$\det(1 \pm \mathsf{K}_L^{\omega, \nu}[t])$ 
does not vanish for every $t \in [0,\tau)$. 
\end{theorem}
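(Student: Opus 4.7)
The plan splits the theorem into three pieces handled in order. First, formula \eqref{s207} shows that $K_L^{\omega,\nu}$ is supported in $[0,\infty)$, so for $t \leq 0$ the kernel $(x,y) \mapsto K_L^{\omega,\nu}(x+y)$ on $(-\infty,t]^2$ vanishes identically and $\mathsf{K}_L^{\omega,\nu}[t]=0$. For $t>0$, the same support property forces the kernel to be supported in the compact set $\{(x,y) : -t \leq x, y \leq t,\ x+y \geq 0\} \subset [-t,t]^2$. Self-adjointness is automatic: the kernel is symmetric in $(x,y)$ because it depends only on $x+y$, and it is real-valued since, under $L \in \mathcal{S}_\R$, the archimedean factor $G_L^{\omega,\nu}$ is real and the coefficients $q_L^{\omega,\nu}(n)$ arising from the real numbers $a_L(n)$ are real as well.

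Second, I would invoke Proposition \ref{prop_4_1}, announced just before \eqref{s208}, which asserts that $K_L^{\omega,\nu}$ is continuous on $\R$ whenever $\nu \omega d_L > 1$. Combined with the compact support established above, the kernel of $\mathsf{K}_L^{\omega,\nu}[t]$ is continuous on $(-\infty,t]^2$ with support in the compact region above. In particular it is bounded and square-integrable, which simultaneously gives the continuous-kernel claim and the Hilbert--Schmidt property.

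Third, I would bound the operator norm by the Hilbert--Schmidt norm and show the latter is small for small $t$. The change of variable $u=x+y$ (for fixed $y$) gives
\begin{equation*}
\|\mathsf{K}_L^{\omega,\nu}[t]\|_{\mathrm{HS}}^2
= \int_{-\infty}^t\!\!\int_{-\infty}^t |K_L^{\omega,\nu}(x+y)|^2\,dx\,dy
= \int_0^{2t}(2t-u)\,|K_L^{\omega,\nu}(u)|^2\,du
\leq 4t^2 \sup_{0 \leq u \leq 2t} |K_L^{\omega,\nu}(u)|^2,
\end{equation*}
which tends to $0$ as $t \to 0^+$ by continuity of $K_L^{\omega,\nu}$. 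Choosing $\tau > 0$ so that $\|\mathsf{K}_L^{\omega,\nu}[t]\| < 1$ on $[0,\tau)$ excludes $\pm 1$ from the spectrum, so they are not eigenvalues. The classical Fredholm determinants $\det(1 \pm \mathsf{K}_L^{\omega,\nu}[t])$, well-defined for integral operators with continuous kernel on a bounded interval, are then nonvanishing on $[0,\tau)$.

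The genuine difficulty here is not the operator-theoretic bookkeeping but establishing continuity of $K_L^{\omega,\nu}$ under the sharp threshold $\nu \omega d_L > 1$. This is essentially a statement about the inverse Fourier transform in \eqref{s206}: the decay of $\Theta_L^{\omega,\nu}(u+iv)$ in the variable $u$ along a high horizontal line is controlled, via Stirling's formula applied to the ratio of gamma factors, precisely by the product $\nu\omega d_L$, and the threshold $\nu\omega d_L > 1$ is exactly what renders the Fourier inversion absolutely convergent and produces a continuous function. This analytic input, the content of Proposition \ref{prop_4_1} to be proved in Section \ref{section_4}, is the only non-formal ingredient in the argument above.
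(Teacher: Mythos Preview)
Your proposal is correct. The first two pieces---support of $K_L^{\omega,\nu}$ in $[0,\infty)$ giving $\mathsf{K}_L^{\omega,\nu}[t]=0$ for $t\leq 0$, and the Hilbert--Schmidt/self-adjoint/continuous-kernel properties for $t>0$ via Proposition~\ref{prop_4_1}---match the paper's route (the argument of Section~\ref{section_3_1} applied under Proposition~\ref{prop_4_1}) essentially verbatim.

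For the existence of $\tau$, however, you take a genuinely simpler path than the paper. The paper's proof (Proposition~\ref{prop_4_2}) argues by contradiction on the Fourier side: assuming $\mathsf{K}_L^{\omega,\nu}[t]f=\pm f$ for some $f\neq 0$, it shows $f$ is supported in $[-t,t]$, passes to $\mathsf{F}$, and invokes the uniform decay $|\Theta_L^{\omega,\nu}(u+iv)|\ll v^{-\nu\omega d_L}$ of Lemma~\ref{lem_4_2} to force $M_v^2 e^{4vt}<1$ for small $t$, a contradiction. You instead bound the operator norm by the Hilbert--Schmidt norm and observe that $\|\mathsf{K}_L^{\omega,\nu}[t]\|_{\mathrm{HS}}^2\leq 4t^2\sup_{[0,2t]}|K_L^{\omega,\nu}|^2\to 0$, using only the continuity of $K_L^{\omega,\nu}$ already supplied by Proposition~\ref{prop_4_1}. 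This is more elementary and avoids the $\Theta$-estimate entirely. What the paper's argument buys is a link between $\tau$ and the decay exponent $\nu\omega d_L$ through $\Theta_L^{\omega,\nu}$ on horizontal lines, which keeps it methodologically aligned with the conditional result Proposition~\ref{prop_4_4}; your argument ties $\tau$ instead to the local size of $K_L^{\omega,\nu}$ near the origin.
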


The first half will be proved by applying the argument of \cite[Section 1]{Su19_1}
to $K_L^{\omega,\nu}$ under Proposition \ref{prop_4_1}. 
The latter half is Proposition \ref{prop_4_2}. 
%
By \eqref{s209}, to understand the operator 
$\mathsf{K}_L^{\omega,\nu}[t]$, 
we need only the values of $K_L^{\omega,\nu}$ on $[0,2t)$ 
that are determined by the information on the gamma factor $\gamma_L$ and 
finitely many coefficient $q_L^{\omega,\nu}(n)$'s by \eqref{s207}. 
\medskip

By Theorem \ref{thm_1}, 
the Hamiltonian $H_L^{\omega,\nu}$ on $[0,\tau)$ is defined by 
\begin{equation} \label{s210}
H_L^{\omega,\nu}(t):=
\begin{bmatrix}
1/\gamma_L^{\omega,\nu}(t) &  0 \\ 
0 & \gamma_L^{\omega,\nu}(t)
\end{bmatrix}, \quad 
\gamma_L^{\omega,\nu}(t):= \left(
\frac{\det(1+{\mathsf K}_L^{\omega,\nu}[t])}{\det(1-{\mathsf K}_L^{\omega,\nu}[t])} 
\right)^2,
\end{equation}
and it defines the canonical system
\begin{equation} \label{s211}
-\frac{d}{dt}
\begin{bmatrix}
A(t,z) \\ B(t,z)
\end{bmatrix}
= z 
\begin{bmatrix}
0 & -1 \\ 1 & 0
\end{bmatrix}
H_L^{\omega,\nu}(t)
\begin{bmatrix}
A(t,z) \\ B(t,z)
\end{bmatrix}, \quad z \in \C  
\end{equation}
on $[0,\tau)$. 
Next, we construct a unique solution of the canonical system recovering 
the entire function $E_L^{\omega,\nu}$. 

%
%
\subsection{Results on (3-2)}
%
%

Let $\tau$ be the number in Theorem \ref{thm_1}. 
Then, two integral equations 
\begin{equation*}
(1 \pm \mathsf{K}_L^{\omega,\nu}[t])\varphi_t^{\pm}(x)
=
\mathbf{1}_{(-\infty,t)}(x) K_L^{\omega,\nu}(x+t)
\end{equation*}
for unknown functions $\varphi_t^{\pm} \in L^2(-\infty,t)$  
have unique solutions 
for every $t \in [0,\tau)$, 
since $1 \pm \mathsf{K}_L^{\omega,\nu}[t]$ are invertible. 
We extend the solutions $\varphi_t^\pm$ to continuous functions on $\R$ by 
\begin{equation*}
\phi^{\pm}(t,x)  = K_L^{\omega,\nu}(x+t) \mp \int_{-\infty}^{t} K_L^{\omega,\nu}(x+y) \varphi_t^\pm(y) \, dy.
\end{equation*}
Then, we find that $|\phi^\pm(t,x)| \ll e^{c|x|}$ for some $c>0$ which may depend on $t$ 
(Proposition \ref{prop_4_1} and \cite[Lemma 2.7]{Su19_1}).  
Therefore, the two functions defined by 
\begin{equation} \label{s212}
\aligned
A_L^{\omega,\nu}(t,z) 
&:= \frac{1}{2}\,\gamma_L^{\omega,\nu}(t)^{1/2}E_L^{\omega,\nu}(z) \left( e^{izt} + \int_{t}^{\infty} \phi^+(t,x)e^{izx} \, dx \right), \\
-iB_L^{\omega,\nu}(t,z) 
&:= \frac{1}{2}\,\gamma_L^{\omega,\nu}(t)^{-1/2}E_L^{\omega,\nu}(z) \left( e^{izt} - \int_{t}^{\infty} \phi^-(t,x)e^{izx} \, dx \right)
\endaligned
\end{equation}
are analytic in the upper half-plane $\Im(z)>c'$ 
for $t \in [0,\tau)$.  

\begin{theorem} \label{thm_2}
Let $L \in \mathcal{S}_{\R}$ and 
let $(\omega, \nu) \in \R_{>0} \times \Z_{>0}$ be such that \eqref{s208} holds. 
We define $E_L^{\omega,\nu}$ by \eqref{s204}, and then define $A_{L}^{\omega,\nu}$ and $B_{L}^{\omega,\nu}$ 
by \eqref{s104} for $E=E_{L}^{\omega,\nu}$. 
Let $\tau=\tau(L;\omega,\nu)$ be the positive real number in Theorem \ref{thm_1}. 
Let $A_L^{\omega,\nu}(t,z)$ and $B_L^{\omega,\nu}(t,z)$ be the families of functions defined by \eqref{s212}. 
Then,  
\begin{enumerate}
\item $A_L^{\omega,\nu}(t,z)$ and $B_L^{\omega,\nu}(t,z)$ 
extend to real entire functions as a function of $z$ for every $t \in [0,\tau)$, 
\item $A_L^{\omega,\nu}(t,z)$ is even and $B_L^{\omega,\nu}(t,z)$ is odd as a function of $z$ for every $t \in [0,\tau)$, 
\item $A_L^{\omega,\nu}(t,z)$ and $B_L^{\omega,\nu}(t,z)$ are continuous and 
piecewise continuously differentiable functions as functions of $t$ for every $z \in \C$,  
\item ${}^{\rm t}[A_L^{\omega,\nu}(t,z), B_L^{\omega,\nu}(t,z)]$ 
solves the canonical system \eqref{s211} on $[0,\tau)$ 
associated with the Hamiltonian $H_L^{\omega,\nu}$,
\item $A_L^{\omega,\nu}(0,z)=A_L^{\omega,\nu}(z)$, $B_L^{\omega,\nu}(0,z)=B_L^{\omega,\nu}(z)$, and thus  
\begin{equation} \label{s213}
E_L^{\omega,\nu}(z)=A_L^{\omega,\nu}(0,z) - i B_L^{\omega,\nu}(0,z).  
\end{equation}
\end{enumerate} 
\end{theorem}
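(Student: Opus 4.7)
The plan is to derive Theorem 2 as a direct specialization of the general inverse spectral construction of Section 3 (i.e. Theorem 5) to the entire function $E=E_L^{\omega,\nu}$, using Theorem 1 together with the auxiliary facts to be proved in Section 4 (Propositions 4.1 and 4.2) to verify the hypotheses. In other words, once one checks that $E_L^{\omega,\nu}$ fits the five-condition framework of Theorem 5, formulas \eqref{s212} are exactly the formulas that Theorem 5 outputs for the solution of the canonical system, and the five conclusions (1)--(5) become the conclusions of Theorem 5 in this special case. The task therefore is not to rebuild the theory but to translate it carefully.

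First I would record that, by Theorem 1, for every $t\in[0,\tau)$ the operator $\mathsf{K}_L^{\omega,\nu}[t]$ is Hilbert--Schmidt, self-adjoint, and $1\pm\mathsf{K}_L^{\omega,\nu}[t]$ are invertible, so the integral equations defining $\varphi_t^\pm$ have unique $L^2(-\infty,t)$ solutions. The extensions $\phi^\pm(t,x)$ are then continuous on $\R$, with the bound $|\phi^\pm(t,x)|\ll e^{c|x|}$ supplied by Lemma 3.7 and Proposition 4.1; in particular the integrals in \eqref{s212} converge absolutely and define analytic functions of $z$ in some upper half-plane $\Im(z)>c'$. The Fredholm determinant formula \eqref{s210} guarantees that $\gamma_L^{\omega,\nu}(t)$ is continuous and positive on $[0,\tau)$, which makes the prefactors in \eqref{s212} well-defined and continuous in $t$. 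Differentiability of $\phi^\pm(t,x)$ in $t$ comes from differentiating the resolvent $(1\pm\mathsf{K}_L^{\omega,\nu}[t])^{-1}$ in $t$, yielding the piecewise-$C^1$ regularity in part (3), and the canonical system equations in (4) drop out of the two standard Krein--Gohberg--type identities linking $\phi^+$, $\phi^-$, and the logarithmic derivative of $\gamma_L^{\omega,\nu}$. The initial-value identity (5) is the $t=0$ case of \eqref{s212}, where the integrals vanish because $\mathsf{K}_L^{\omega,\nu}[0]=0$, leaving precisely $A_L^{\omega,\nu}(z)$ and $-iB_L^{\omega,\nu}(z)$ as required.

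For (2), the evenness of $A_L^{\omega,\nu}(t,z)$ and oddness of $B_L^{\omega,\nu}(t,z)$ in $z$ are inherited at $t=0$ from the observation already made in the paper that $(E_L^{\omega,\nu})^\sharp(z)=E_L^{\omega,\nu}(-z)$, and are preserved for $t>0$ because the canonical system, with its real symmetric Hamiltonian and the specific diagonal form \eqref{s210}, commutes with the involution $z\mapsto -z$ applied componentwise with signs $(+,-)$ to $(A,B)$. Equivalently, $(A_L^{\omega,\nu}(t,-z),-B_L^{\omega,\nu}(t,-z))$ is another solution with the same initial data at $t=0$, hence equals $(A_L^{\omega,\nu}(t,z),B_L^{\omega,\nu}(t,z))$ by uniqueness.

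The main obstacle is part (1): the integrals in \eqref{s212} converge \emph{a priori} only in an upper half-plane, so entire extension is not immediate from the defining formula. The resolution, which is exactly the point of coupling the two integral equations with the factor $E_L^{\omega,\nu}(z)$ and the Fredholm determinants in front, is to show that the apparent poles of $E_L^{\omega,\nu}(z)$-times-integral combinations are cancelled and that $A_L^{\omega,\nu}(t,z)$ and $B_L^{\omega,\nu}(t,z)$ admit entire representations obtained by pulling the integrals against $e^{izx}$ back to contour integrals involving $\Theta_L^{\omega,\nu}$. This is the step in which the arithmetic structure \eqref{s207} of $K_L^{\omega,\nu}$ — in particular its support in $[0,\infty)$ — is essential, and it parallels the corresponding step in the general construction of Section 3; the real-entireness then follows because the kernel $K_L^{\omega,\nu}$ is real-valued. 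Once (1) is in hand, (2)--(5) follow as outlined, completing the proof by invoking Theorem 5.
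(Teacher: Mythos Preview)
Your overall strategy is correct and matches the paper exactly: Theorem~\ref{thm_2} is obtained by applying the general Theorem~\ref{thm_5} to $E=E_L^{\omega,\nu}$, with Propositions~\ref{prop_4_1} and~\ref{prop_4_2} (equivalently Theorem~\ref{thm_1}) supplying the hypotheses (K1)--(K5).

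Two corrections to your sketch. For (5), the integrals in \eqref{s212} do \emph{not} vanish at $t=0$: by the convention following \eqref{s307} one has $\phi^\pm(0,x)=K_L^{\omega,\nu}(x)$, so $\int_0^\infty\phi^\pm(0,x)e^{izx}\,dx=\Theta_L^{\omega,\nu}(z)$ and, since $\gamma_L^{\omega,\nu}(0)=1$, formula \eqref{s212} gives $A_L^{\omega,\nu}(0,z)=\tfrac12 E_L^{\omega,\nu}(z)(1+\Theta_L^{\omega,\nu}(z))=\tfrac12(E_L^{\omega,\nu}(z)+(E_L^{\omega,\nu})^\sharp(z))=A_L^{\omega,\nu}(z)$, and similarly for $B$; this is Proposition~\ref{prop_3_3}. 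For (1), the entire extension in Section~\ref{section_3} is not obtained by contour shifting in $\Theta_L^{\omega,\nu}$ but via Proposition~\ref{prop_3_1} and Corollary~\ref{cor_3_1}: one rewrites $\mathfrak{A}(t,z),\mathfrak{B}(t,z)$ as Fourier transforms of $\mathfrak{F}(t,\cdot),\mathfrak{G}(t,\cdot)$, and the alternative representations \eqref{s328} in terms of $\varrho$ from (K1) show that these integrands decay like $e^{-n|x|}$ for every $n$, yielding entire Fourier transforms with the stated parities.
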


Theorem \ref{thm_2} 
is a generalization of \cite[Theorem 2.3]{Su12} 
which only deal with the case of $L=\zeta$ and $\nu=1$. 
The condition \eqref{s208} is $\omega>1$ 
for $L=\zeta$ and $\nu=1$ by $d_\zeta=1$. 

Theorem \ref{thm_2} will be proved by applying \cite[Theorem 1.1]{Su19_1} to 
the entire function $E_L^{\omega,\nu}$ 
under Propositions \ref{prop_4_1} and \ref{prop_4_2} 
that are proved in Section \ref{section_4}. 

%
%
\subsection{Results on (3-3)}
%
%

By the above results, we obtain the Hamiltonian $H_L^{\omega,\nu}$ defined on $[0,\tau)$ and   
the solution ${}^{\rm t}[A_L^{\omega,\nu}(t,z), B_L^{\omega,\nu}(t,z)]$ of the canonical system associated with $H_L^{\omega,\nu}$
satisfying \eqref{s213} 
{\it without} ${\rm GRH}(L)$ for $L \in \mathcal{S}_{\R}$. 
On the other hand, \cite[Theorem 40]{MR0229011} and 
\cite[Theorem 2.3]{Su12} 
suggest that their domain extends to $[0,\infty)$ under ${\rm GRH}(L)$. 
Recall that  ${\rm GRH}(L)$ implies $E_L^{\omega,\nu} \in \mathbb{HB}$ for every $(\omega, \nu)$ 
by Proposition \ref{prop_2_2}. 
If we suppose the weaker condition $E_L^{\omega,\nu} \in \overline{\mathbb{HB}}$, 
the Hamiltonian $H_L^{\omega,\nu}$ of \eqref{s210} 
and 
the solution ${}^{\rm t}[A_L^{\omega,\nu}(t,z), B_L^{\omega,\nu}(t,z)]$ of \eqref{s212} 
extend to $I=[0,\infty)$, 
because the Fredholm determinant $\det(1 \pm \mathsf{K}_L^{\omega,\nu}[t])$ 
does not vanish for every $t \geq 0$ (Proposition \ref{prop_4_4}). 
The following result asserts that 
the condition in (3-3) with $t_1=\infty$ 
is necessary for ${\rm GRH}(L)$. 

\begin{theorem} \label{thm_3}
Let $L \in \mathcal{S}_{\R}$ and let $(\omega, \nu) \in \R_{>0} \times \Z_{>0}$ 
be such that \eqref{s208} holds. 
Assume that $E_L^{\omega,\nu} \in \overline{\mathbb{HB}}$ 
if $0<\omega \leq 1/2$ and define 
$J_L^{\omega,\nu}(t;z,w)$ 
by \eqref{s106} using $A_L^{\omega,\nu}(t,z)$ and $B_L^{\omega,\nu}(t,z)$ 
of Theorem \ref{thm_1} 
for $t \geq 0$. 
Then $J_L^{\omega,\nu}(t;z,z) \not\equiv 0$ as a function of $z$ for any $t \geq 0$ and 
$\displaystyle{\lim_{t \to \infty}J_L^{\omega,\nu}(t;z,w) = 0}$ 
for every fixed $z,w \in \C$.
\end{theorem}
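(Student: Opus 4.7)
The plan is to read $J_L^{\omega,\nu}(t;z,w)$ as the reproducing kernel of the de Branges space $\mathcal{H}(E(t,\cdot))$ attached to $E(t,z) := A_L^{\omega,\nu}(t,z) - iB_L^{\omega,\nu}(t,z)$, and then argue that this chain of spaces shrinks to $\{0\}$. Under the hypothesis $E_L^{\omega,\nu} \in \mathbb{HB}$, Proposition \ref{prop_4_4} extends $H_L^{\omega,\nu}$ and the solution to all of $[0,\infty)$ with no $H_L^{\omega,\nu}$-indivisible intervals, so by the standard canonical-system/de Branges correspondence $E(t,\cdot) \in \overline{\mathbb{HB}}$ for every regular $t \geq 0$ and the family $\{\mathcal{H}(E(t,\cdot))\}_{t \geq 0}$ forms a monotonically decreasing chain of isometrically embedded closed subspaces of $\mathcal{H}(E_L^{\omega,\nu})$. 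By monotone convergence of reproducing kernels, $\lim_{t \to \infty}J_L^{\omega,\nu}(t;z,w) = 0$ for every $(z,w)$ is then equivalent to $\bigcap_{t \geq 0} \mathcal{H}(E(t,\cdot)) = \{0\}$.

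For the non-vanishing assertion $J(t;\cdot,\cdot) \not\equiv 0$, I would show that each $\mathcal{H}(E(t,\cdot))$ is nontrivial, equivalently $E(t,\cdot)$ does not reduce to a pure exponential $Ce^{-iaz}$. Since $E_L^{\omega,\nu}(z) = \xi_L(\tfrac12 + \omega - iz)^\nu$ has infinitely many zeros and the solution $(A_L^{\omega,\nu}(t,\cdot), B_L^{\omega,\nu}(t,\cdot))$ depends continuously on $t$ by Theorem \ref{thm_2}, only finitely many zeros of $E(t,\cdot)$ can disappear on any bounded $t$-interval, so $E(t,\cdot)$ retains infinitely many zeros and the corresponding de Branges space is never the trivial space.

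The convergence $\lim_{t \to \infty} J_L^{\omega,\nu}(t;z,w) = 0$ is the main content. I would extract growth information from \eqref{s212}: the factor $e^{izt}$ forces the mean type of $E(t,\cdot)$ in $\C_+$ to drift toward $-\infty$ at rate $t$, so that any $F$ lying in $\mathcal{H}(E(t,\cdot))$ for every $t \geq 0$ must have mean type $-\infty$ in $\C_+$; combined with membership in $\mathcal{H}(E_L^{\omega,\nu})$ (which controls the mean type in $\C_-$), a Krein-type argument on entire functions of bounded type then forces $F \equiv 0$. The principal obstacle is to show that the tail integrals $\int_t^\infty \phi^\pm(t,x)\,e^{izx}\,dx$ in \eqref{s212} do not spoil the $e^{-t\,\Im z}$ decay coming from the $e^{izt}$ term, which requires uniform-in-$t$ estimates on the resolvent functions $\phi^\pm$ and on the Fredholm determinants $\gamma_L^{\omega,\nu}(t)$, drawing on the Hilbert-Schmidt structure of $\mathsf{K}_L^{\omega,\nu}[t]$ from Theorem \ref{thm_1}, the explicit finite-sum expansion \eqref{s207} of the kernel, and the nonvanishing of the determinants on $[0,\infty)$ supplied by Proposition \ref{prop_4_4}.
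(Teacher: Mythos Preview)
Your high-level framing --- that $J_L^{\omega,\nu}(t;z,w)$ is the reproducing kernel of a de Branges space and that the claim is equivalent to the chain $\{\mathcal{B}(E(t,\cdot))\}_{t\geq 0}$ being nontrivial for each $t$ and shrinking to $\{0\}$ --- matches the paper's viewpoint. But the paper's execution is quite different from yours, and it completely sidesteps the obstacle you yourself flag in your last paragraph.

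Rather than estimate the mean type of $E(t,\cdot)$ via uniform control of $\gamma_L^{\omega,\nu}(t)$ and $\int_t^\infty \phi^\pm(t,x)e^{izx}\,dx$, the paper passes to the ``time side'': it introduces $\mathcal{V}_t = L^2(t,\infty)\cap \mathsf{K}\bigl(L^2(t,\infty)\bigr)$ and proves (Theorem~\ref{thm_6}) that $E(z)\mathsf{F}(\mathcal{V}_t)=\mathcal{B}(E(t,\cdot))$ and $J(t;z,w)=\tfrac{1}{2\pi}\langle X_z^t,X_w^t\rangle$, where $X_z^t\in\mathcal{V}_t$ is the Riesz vector for the evaluation $f\mapsto E(z)(\mathsf{F}f)(z)$. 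From $\mathcal{V}_t\subset\mathcal{V}_0$ one gets $\|X_z^t\|^2=\langle X_z^t,X_z^0\rangle$, and a single Cauchy--Schwarz step yields $\|X_z^t\|^2\leq \int_t^\infty |X_z^0(x)|^2\,dx\to 0$ because $X_z^0\in L^2(0,\infty)$. No asymptotics of $\gamma_L^{\omega,\nu}(t)$ or of $\phi^\pm(t,\cdot)$ as $t\to\infty$ are needed at all. The non-vanishing $J(t;\cdot,\cdot)\not\equiv 0$ is likewise obtained on the time side (Lemma~\ref{lem_6_2}): one shows $\mathcal{V}_t^\perp=L^2(-\infty,t)+\mathsf{K}(L^2(-\infty,t))$ is a \emph{proper} closed subspace of $L^2(\R)$, hence $\mathcal{V}_t\neq\{0\}$.

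Your proposed mean-type route would in principle also give $\bigcap_t\mathcal{B}(E(t,\cdot))=\{0\}$, but the ``principal obstacle'' you name is a real one, and none of the ingredients you list resolve it. Proposition~\ref{prop_4_4} gives only nonvanishing of $\det(1\pm\mathsf{K}_L^{\omega,\nu}[t])$, with no growth information on $\gamma_L^{\omega,\nu}(t)$ as $t\to\infty$; the Hilbert--Schmidt bound on $\mathsf{K}_L^{\omega,\nu}[t]$ from Theorem~\ref{thm_1} grows with $t$; and the finite-sum formula~\eqref{s207} gives pointwise control of $K_L^{\omega,\nu}$ but not the uniform resolvent bounds you would need for $\phi^\pm(t,\cdot)$. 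Your argument for $J(t;\cdot,\cdot)\not\equiv 0$ is also incomplete: continuity of $(A(t,\cdot),B(t,\cdot))$ in $t$ does not by itself preclude infinitely many zeros of $E(t,\cdot)$ escaping, and in any case one would still need to rule out $E(t,\cdot)$ degenerating to a function with trivial de Branges space for a \emph{single} $t$. The paper's time-side approach replaces all of this with soft $L^2$ arguments.
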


%
%
\subsection{Equivalent condition for GRH}
%
%

The condition in (3-3) with $t_1=\infty$ 
is a sufficient condition for $E_L^{\omega,\nu} \in \overline{\mathbb{HB}}$ 
by \cite[Proposition 2.4]{Su19_1}. 
Therefore, $E_L^{\omega,\nu} \in \overline{\mathbb{HB}}$ is equivalent that 
$H_{L}^{\omega,\nu}$ is extended to a Hamiltonian on $[0,\infty)$ 
and $\lim_{t \to \infty}J_L^{\omega,\nu}(t;z,w)=0$ if $\nu\omega d_L>1$. 
Hence we obtain the following equivalent condition for ${\rm GRH}(L)$ for $L \in \mathcal{S}_{\R}$ 
by noting that the range of $\omega$ in Proposition \ref{prop_2_1} 
can be relaxed to a decreasing sequence tending to zero. 
%
\comment{
Considering Theorem \ref{thm_2}, Theorem \ref{thm_3} and \cite[Proposition 2.4]{Su19_1}, 
$E_L^{\omega,\nu} \in \overline{\mathbb{HB}}$ is equivalent to the condition that 
$H_{L}^{\omega,\nu}$ is extended to a Hamiltonian on $[0,\infty)$ 
and $\lim_{t \to \infty}J_L^{\omega,\nu}(t;z,w)=0$ if $\nu\omega d_L>1$. 
Hence we obtain the following equivalent condition for ${\rm GRH}(L)$ for $L \in \mathcal{S}_{\R}$ 
by noting that the range of $\omega$ in Proposition \ref{prop_2_1} 
can be relaxed to a decreasing sequence tending to $0$. 
}
%
\begin{theorem} \label{thm_4}
The validity of ${\rm GRH}(L)$ for $L \in \mathcal{S}_{\R}$ is equivalent to the condition that 
there exists a sequence $(\omega_n,\nu_n) \in \R_{>0} \times \Z_{>0}$, $n \geq 1$, such that  
\begin{enumerate}
\item $\omega_m < \omega_n$ if $m>n$ and $\omega_n \to 0$ as $n \to \infty$, 
\item $\nu_n \omega_n d_L>1$, 
\item $\det(1 \pm \mathsf{K}_{L}^{\omega_n,\nu_n}[t])\not=0$ for every $t \geq 0$, 
and 
\item $\displaystyle{\lim_{t \to \infty}J_{L}^{\omega_n,\nu_n}(t;z,z)=0}$ for every $z \in \C_+$.  
\end{enumerate}
\end{theorem}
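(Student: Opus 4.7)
The plan is to combine the inverse-spectral machinery already assembled in Theorems \ref{thm_1}, \ref{thm_2}, \ref{thm_3} and Propositions \ref{prop_2_1}, \ref{prop_2_2}, \ref{prop_3_4}, and then to observe that the range of $\omega$ in Proposition \ref{prop_2_1} can be thinned out to any sequence tending to $0$. Both implications are essentially bookkeeping once those earlier results are in hand.

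For the forward implication, assume ${\rm GRH}(L)$. I would take any strictly decreasing sequence $\omega_n\to 0^+$ (for instance $\omega_n=1/n$) and, for each $n$, pick $\nu_n\in\Z_{>0}$ large enough that $\nu_n\omega_n d_L>1$; this secures (1) and (2). Under ${\rm GRH}(L)$, Proposition \ref{prop_2_2} gives $E_L^{\omega_n,\nu_n}\in\mathbb{HB}$ for every $n$, and Proposition~\ref{prop_4_4} (quoted between Theorems \ref{thm_2} and \ref{thm_3}) then yields $\det(1\pm\mathsf{K}_L^{\omega_n,\nu_n}[t])\ne 0$ for every $t\ge 0$, which is (3) and extends $H_L^{\omega_n,\nu_n}$ to a Hamiltonian on all of $[0,\infty)$ with no indivisible intervals. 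Condition (4) is then precisely the conclusion of Theorem \ref{thm_3}.

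For the converse, suppose such a sequence $(\omega_n,\nu_n)$ exists and fix $n$. Because of (2), Theorems \ref{thm_1} and \ref{thm_2} apply and give a Hamiltonian $H_L^{\omega_n,\nu_n}$ together with a solution ${}^t(A_L^{\omega_n,\nu_n}(t,z),B_L^{\omega_n,\nu_n}(t,z))$ of \eqref{s211} on an initial interval $[0,\tau)$, satisfying the recovery identity \eqref{s213}. Condition (3) removes the obstruction to continuing the definitions \eqref{s210} and \eqref{s212} across every $t>0$, so $H_L^{\omega_n,\nu_n}$ extends to a Hamiltonian on $[0,\infty)$ with all points regular, and the solution extends to $[0,\infty)$ with the same initial data. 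Condition (4) then supplies precisely the boundary hypothesis $\lim_{t\to\infty}J_L^{\omega_n,\nu_n}(t;z,z)=0$ required by Proposition \ref{prop_3_4}, which lets me conclude $E_L^{\omega_n,\nu_n}\in\overline{\mathbb{HB}}$ for every $n$.

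It remains to deduce ${\rm GRH}(L)$ from this family of containments. Since a member of $\overline{\mathbb{HB}}$ has no zeros in $\C_+$, the identity $E_L^{\omega_n,\nu_n}(z)=\xi_L(\tfrac{1}{2}+\omega_n-iz)^{\nu_n}$ shows that $\xi_L(s)$ cannot vanish in the open half-plane $\Re(s)>\tfrac{1}{2}+\omega_n$; letting $n\to\infty$ rules out zeros with $\Re(s)>\tfrac{1}{2}$, and the functional equation \eqref{s203} then confines all zeros to the critical line. The only real obstacle in this argument is the extension from $[0,\tau)$ to $[0,\infty)$ in the converse direction; this requires that all the estimates in Theorems \ref{thm_1}--\ref{thm_2} are local in $t$ and depend only on the invertibility of $1\pm\mathsf{K}_L^{\omega_n,\nu_n}[t]$, so that the extension is forced by hypothesis (3) alone—everything else is a straightforward synthesis of earlier results.
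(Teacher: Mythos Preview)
Your proposal is correct and follows essentially the same approach as the paper's proof in Section~\ref{section_7}: for necessity you invoke Proposition~\ref{prop_2_2}, Proposition~\ref{prop_4_4}, and Theorem~\ref{thm_3} exactly as the paper does, and for sufficiency you use hypothesis (3) to extend the construction of Theorems~\ref{thm_1}--\ref{thm_2} to $[0,\infty)$, apply Proposition~\ref{prop_3_4} via hypothesis (4), and then read off the zero-free region from $E_L^{\omega_n,\nu_n}\in\overline{\mathbb{HB}}$. The only cosmetic difference is that the paper first passes from $\nu_n$ to $\nu=1$ before discussing zeros, whereas you observe directly that $\xi_L(\tfrac{1}{2}+\omega_n-iz)^{\nu_n}$ having no zeros in $\C_+$ forces $\xi_L$ to be zero-free in $\Re(s)>\tfrac{1}{2}+\omega_n$; both are equally valid.
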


\noindent
The detailed proof of Theorem \ref{thm_4} will be given in Section \ref{section_7}. 
Also, a variant of Theorem \ref{thm_4} is stated in Section \ref{section_9_1}. 

%
\section{Inverse problem for some special canonical system} \label{section_3}
%
%

We review the way of construction of Hamiltonians in \cite{Su19_1}. 
As usual, we denote by 
\begin{equation*}
({\mathsf F}f)(z) 
 = \int_{-\infty}^{\infty} f(x) \, e^{ixz} \, dx, \quad 
({\mathsf F}^{-1} g)(z) 
 = \frac{1}{2\pi}\int_{-\infty}^{\infty} g(u) \, e^{-ixu} \, du
\end{equation*}
the Fourier integral and inverse Fourier integral, respectively, 
and use the Vinogradov symbol ``$\ll$'' to estimate a function 
as well as the Landau symbol ``$O$''. 

Let $E$ be an entire function satisfying the following five conditions: 
\begin{enumerate}
\item[(K1)] there exists a real-valued 
continuously differentiable function $\varrho$ on $\R$ 
such that $|\varrho(x)| \ll e^{-n|x|}$ for any $n>0$ 
and $E(z)=(\mathsf{F}\varrho)(z)$ for all $z \in \C$;  
\item[(K2)] there exists a real-valued continuous function $K$ on $\R$ such that 
$|K(x)| \ll \exp(c|x|)$ for some $c \geq 0$ and $E^\sharp(z)/E(z)=(\mathsf{F}K)(z)$ holds for $\Im(z) > c$; 
\item[(K3)] $K$ vanishes on $(-\infty,0)$;  
\item[(K4)] $K$ is continuously differentiable outside a discrete subset $\Lambda \subset \R$ and $|K'|$ is locally integrable on $\R$;  
\item[(K5)] there exists $0<\tau \leq \infty$ such that both $\pm 1$ are not eigenvalues of 
the operator 
$
\mathsf{K}[t]: f(x) \mapsto
\mathsf{1}_{(-\infty,t)}(x)\, \int_{-\infty}^{t} K(x+y) \, f(y) \, dy
$
 on $L^2(-\infty,t)$ for every $0 \leq t < \tau$. 
\end{enumerate}
Then, the matrix-valued function $H$ on $[0,\tau)$ defined by 
\begin{equation*} 
H(t):= \begin{bmatrix} 1/\gamma(t) & 0 \\ 0 & \gamma(t) \end{bmatrix}, 
\quad \gamma(t)=\left( \frac{\det(1+\mathsf{K}[t])}{\det(1-\mathsf{K}[t])} \right)^2 
\end{equation*}
is a Hamiltonian on $[0,\tau)$. 
We can also construct a unique solution ${}^{\rm t}[A(t,z),B(t,z)]$ 
of the canonical system associated with this $H$ 
satisfying $(A(0,z),B(0,z))=(A(z),B(z))$ 
as in \cite[Theorem 1.1]{Su19_1} 
by a way similar to \cite{Burnol2011} and \cite{Su12}, 
where $A$ and $B$ are entire functions defined by \eqref{s104}.

\comment{

In Section \ref{section_4} we show that $E_L^{\omega,\nu}$ of \eqref{s204} satisfies five conditions 
(K1)$\sim$(K5) for every $L \in \mathcal{S}_{\R} \not=\emptyset$ under \eqref{s208}. 

Let $A$ and $B$ be entire functions defined by \eqref{s104} for an entire function $E$. 
In this section, according to \cite{Su19_1}, we review the construction of a triple ${\rm Inv}(E)^\flat$ 
consisting of some possibly infinite interval $I=[0,\tau)$ ($0 <\tau \leq \infty$), 
${\rm Sym}_2^+(\R)$-valued function $H$ defined on $I$ 
and the unique solution ${}^t(A(t,z),B(t,z))$ of the canonical system \eqref{s105} on $I$ associated with $H$ 
satisfying 
$(A(0,z),B(0,z))=(A(z),B(z))$, 
where ${\rm Sym}_2^+(\R)$ is the set of all positive definite matrices in ${\rm Sym}_2(\R)$. 
The way of construction is similar to \cite{Burnol2011} and \cite{Su12}. 

{\color{red} 
The most important point is that 
we will impose several conditions for $E$ but $E \in \overline{\mathbb{HB}}$ is not necessary to the following construction of ${\rm Inv}(E)^\flat$ 
differ from de Branges' theory for the inversel problem ${\rm Inv}(E)$. 
}

{\color{blue} (duplication)
\begin{equation*}
\Theta(z)=\Theta_E(z):=\frac{E^\sharp(z)}{E(z)}.
\end{equation*} 
The set of entire functions satisfying (K1)$\sim$(K5) is not empty. 
}
In fact, it will be shown in Section \ref{section_4} that $E_L^{\omega,\nu}$ of \eqref{s204} satisfies these five conditions 
for every $L \in \mathcal{S}_{\R} \not=\emptyset$ under \eqref{s208}. 
}

%
%
\section{Proof of Theorems \ref{thm_1} and \ref{thm_2} } \label{section_4}
%
%

In this section, we prove that $E_L^{\omega,\nu}$ of \eqref{s204} satisfies (K1)$\sim$(K5) in Section \ref{section_3}
(Propositions \ref{prop_4_1} and \ref{prop_4_2}). 
Then, we obtain Theorem \ref{thm_1} 
as a consequence of  Propositions \ref{prop_4_1} and \ref{prop_4_2}. 
Also, we obtain Theorem \ref{thm_2} as a consequence of \cite[Theorem 1.1]{Su19_1}. 

%
%
\subsection{Analytic properties of $E_L^{\omega,\nu}$ and $\Theta_L^{\omega,\nu}$} \label{subsec_3_1}
%
%

\begin{lemma} \label{lem_4_1}
The entire function $E_L^{\omega,\nu}$ define by \eqref{s204} 
for $L \in \mathcal{S}_\R$ and $(\omega,\nu) \in \R_{>0} \times \Z_{>0}$ 
satisfies (K1). 
More precisely, if we define 
\begin{equation} \label{s401}
\varrho_L^{\omega, \nu}(x) = \frac{1}{2\pi} \int_{\Im(z)=c} E_L^{\omega,\nu}(z) e^{-izx} dz 
\end{equation}
by taking some $c \in \R$, then it is a real-valued function in $C^\infty(\R)$ satisfying 
the estimate 
$|\varrho_L^{\omega, \nu}(x)| \ll_n e^{-n|x|}$ for every $n \in \N$ such that 
\begin{equation*}
E_L^{\omega,\nu}(z) 
=(\mathsf{F}\varrho_L^{\omega,\nu})(z)=\int_{-\infty}^{\infty} \varrho _L^{\omega, \nu}(x) e^{izx} \, dx 
\end{equation*}
holds for all $z \in \C$. 
\end{lemma}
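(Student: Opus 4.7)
The plan is to check (K1) by a contour-shift argument based on the standard growth properties of $\xi_L$ on vertical lines.

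First I would establish the key analytic input: for every fixed $v \in \R$, there is an exponential decay estimate
\[
|E_L^{\omega,\nu}(u+iv)| = |\xi_L(\tfrac{1}{2}+\omega+v-iu)|^{\nu} \ll_{v,N} (1+|u|)^{-N}\, e^{-\frac{\pi d_L\nu}{4}|u|} \qquad (|u|\to\infty),
\]
uniformly when $v$ stays in a compact set. This follows from three ingredients: Stirling's formula applied to $\gamma_L$ gives the factor $e^{-\frac{\pi d_L}{4}|u|}$ (times a polynomial in $|u|$); the Phragmén--Lindelöf convexity bound in vertical strips gives at most polynomial growth of $L(s)$ on vertical lines; and the prefactor $s^{m_L}(s-1)^{m_L}$ is harmless. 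Raising to the $\nu$th power and combining, we get the displayed decay, which is much faster than any polynomial, so $\int_{\Im z=c}|E_L^{\omega,\nu}(z)|\,|dz|<\infty$ for every $c\in\R$. In particular \eqref{s401} converges absolutely, and the integrand also decays exponentially on any vertical segment $\Re z=\pm R$ inside a horizontal strip (since $|u|=R$ is large), so Cauchy's theorem on the rectangle $[-R,R]\times[c_1,c_2]$ and $R\to\infty$ show that the value of the integral does not depend on $c$.

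Next I would exploit this freedom of $c$ to derive the rapid decay in $x$. For $x>0$ I shift the contour to $\Im z=-N$, giving
\[
|\varrho_L^{\omega,\nu}(x)|
\le \frac{e^{-Nx}}{2\pi}\int_{-\infty}^\infty |\xi_L(\tfrac12+\omega-N-iu)|^{\nu}\,du
= C_N\,e^{-Nx},
\]
and for $x<0$ I shift to $\Im z=+N$, obtaining $|\varrho_L^{\omega,\nu}(x)|\le C_N' e^{Nx}$. Since $N$ is arbitrary, this is the required bound $|\varrho_L^{\omega,\nu}(x)|\ll_n e^{-n|x|}$. Smoothness and the $C^\infty$-bound for derivatives come by differentiating under the integral: each derivative only multiplies the integrand by $(-iz)^k$, and the Gaussian-type decay above absorbs this easily on any fixed contour.

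For realness, the assumption $L\in\mathcal S_\R$ makes $\xi_L$ a real entire function, i.e.\ $\overline{\xi_L(\bar w)}=\xi_L(w)$; computing on the horizontal contour $z=u+ic$ one checks
\[
\overline{E_L^{\omega,\nu}(-u+ic)} = \xi_L(\tfrac12+\omega+c-iu)^{\nu} = E_L^{\omega,\nu}(u+ic),
\]
which together with the substitution $u\mapsto -u$ in $\overline{\varrho_L^{\omega,\nu}(x)}$ gives $\overline{\varrho_L^{\omega,\nu}(x)}=\varrho_L^{\omega,\nu}(x)$. Finally, for the Fourier-inversion identity in (K1): setting $F_c(x):=\varrho_L^{\omega,\nu}(x)e^{cx}$, the definition \eqref{s401} states exactly that $F_c$ is the inverse Fourier transform of $u\mapsto E_L^{\omega,\nu}(u+ic)$, which is a Schwartz function by the decay established above, so standard Fourier inversion gives $E_L^{\omega,\nu}(u+ic)=\int_\R F_c(x)e^{iux}\,dx=\int_\R \varrho_L^{\omega,\nu}(x)e^{i(u+ic)x}\,dx$; since $\varrho_L^{\omega,\nu}$ has faster-than-exponential decay the right-hand side is entire in $z$, so the identity $E_L^{\omega,\nu}(z)=(\mathsf F\varrho_L^{\omega,\nu})(z)$ extends to all $z\in\C$ by analytic continuation.

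The main obstacle is the decay estimate on $|E_L^{\omega,\nu}(u+iv)|$ in the first paragraph: it requires a Stirling-plus-convexity estimate that is uniform in $v$ on compacts and allows the rectangular-contour shift, and one must also check that the convexity bound applied to $L$ is not too wasteful to swamp the exponential decay coming from $\gamma_L$. Once this uniform estimate is in hand, the remaining steps are essentially standard Fourier-analytic bookkeeping.
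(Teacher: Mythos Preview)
Your proof is correct and follows essentially the same approach as the paper: both combine the Phragm\'en--Lindel\"of polynomial bound for $L(s)$ in vertical strips with Stirling's formula for $\gamma_L$ to get rapid decay of $\xi_L$ on horizontal lines, then shift the contour to obtain the super-exponential decay of $\varrho_L^{\omega,\nu}$ and conclude via Fourier inversion. Your writeup is slightly more explicit about the exponential factor $e^{-\frac{\pi d_L\nu}{4}|u|}$ coming from Stirling (the paper just says ``decays faster than $|s|^{-n}$''), and you verify realness for general $c$ rather than specializing to $c=0$, but these are minor variations on the same argument.
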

\begin{proof}
From (S1), $L(s)$ is bounded in $\Re(s) \geq 2$. 
From (S2), (S3) and the Phragm{\'e}n--Lindel{\"o}f principle, 
$(s-1)^{m_L}L(s)$ is bounded by a polynomial of $s$ in any vertical strip of finite width. 
Thus, $\xi_L(s)$ decays faster than $|s|^{-n}$ for any $n \in \N$ in any vertical strip of finite width. 
Hence $\varrho_L^{\omega, \nu}$ is defined by \eqref{s401} independent of $c$ and belongs to $C^\infty(\R)$. 
From the second equation of \eqref{s203}, we have $(E_L^{\omega,\nu})^\sharp(z)=E_L^{\omega,\nu}(-z)$. 
Thus, by taking $c=0$ in \eqref{s401}, we find that $\varrho_L^{\omega, \nu}$ is real-valued. 
By moving the path of integration in \eqref{s401} to the upside or downside, we see that 
$|\varrho_L^{\omega, \nu}(x)| \ll_n e^{-n|x|}$ for every $n \in \N$. 
Taking the Fourier transform of \eqref{s401}, 
we obtain $E_L^{\omega,\nu}(z) =(\mathsf{F}\varrho_L^{\omega,\nu})(z)$. 
\end{proof}

\begin{lemma} \label{lem_4_2} 
Let $\Theta_L^{\omega,\nu}$ be the meromorphic function in $\C$ 
define by \eqref{s205} 
for $L \in \mathcal{S}_\R$ and $(\omega,\nu) \in \R_{>0} \times \Z_{>0}$. 
Then the estimate 
\begin{equation} \label{s402}
|\Theta_L^{\omega,\nu}(u+iv)| \ll_v (1+|u|)^{-\nu\omega d_L}
\end{equation}
holds for any fixed $v>1/2+\omega$, 
Moreover, the estimate 
\begin{equation} \label{s403}
|\Theta_L^{\omega,\nu}(u+iv)| \ll_\delta (1+v)^{-\nu\omega d_L}
\end{equation}
holds uniformly for $u \in \R$ and $v \geq 1/2+\omega+\delta$ for any fixed $\delta>0$. 
\end{lemma}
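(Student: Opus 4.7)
The plan is to unfold the definition of $\xi_L$ inside $\Theta_L^{\omega,\nu}$ and reduce both estimates to Stirling's asymptotic formula applied to the gamma factors, treating the $L$-function, polynomial and $Q^s$ contributions as uniformly bounded in the relevant region. Setting $s = \tfrac12 + v - iu$, the definitions \eqref{s202}, \eqref{s205} and (S3) give
\[
\Theta_L^{\omega,\nu}(u+iv) = \epsilon_L^\nu\, R(s)^\nu\, Q^{-2\nu\omega}\!\left(\frac{L(s-\omega)}{L(s+\omega)}\right)^{\!\nu} \prod_{j=1}^{r}\!\left(\frac{\Gamma(\lambda_j(s-\omega)+\mu_j)}{\Gamma(\lambda_j(s+\omega)+\mu_j)}\right)^{\!\nu},
\]
where $R(s) = \bigl[(s-\omega)(s-\omega-1)/((s+\omega)(s+\omega-1))\bigr]^{m_L}$. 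For $v > \tfrac12+\omega$ one has $\Re(s\pm\omega) > 1$, so axiom (S1) gives $|L(s-\omega)| \ll 1$ and the Euler product (S5) gives $|L(s+\omega)|^{-1} \ll 1$ uniformly; the rational factor $R(s)$ is bounded (indeed $R(s) \to 1$ as $|s| \to \infty$); and $Q^{-2\nu\omega}$ is a positive constant. Hence both estimates reduce to bounding the gamma ratio.

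Writing $w_j := \lambda_j s + \mu_j$, the key structural point is that $\Re(\mu_j) \geq 0$ by (S3), whence
\[
\Re(w_j \pm \lambda_j\omega) = \lambda_j\bigl(\tfrac12 + v \pm \omega\bigr) + \Re(\mu_j) > 0
\]
throughout the region under consideration, so $|\arg(w_j \pm \lambda_j\omega)| < \pi/2$ with a uniform gap. The uniform version of Stirling's expansion $\log\Gamma(w) = (w-\tfrac12)\log w - w + \tfrac12\log(2\pi) + O(|w|^{-1})$, valid on any sector bounded away from the negative real axis, then yields
\[
\left|\frac{\Gamma(w_j - \lambda_j\omega)}{\Gamma(w_j + \lambda_j\omega)}\right| \ll |w_j|^{-2\lambda_j\omega}
\]
with implied constants depending only on $\lambda_j,\mu_j,\omega$, provided $|w_j|$ is bounded below away from zero (which it is in our region). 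Taking the product over $j$ and the $\nu$-th power gives a bound of the form $\prod_j|w_j|^{-2\nu\lambda_j\omega}$, which combines to a single power of a lower bound for $|w_j|$ since $d_L = 2\sum_j\lambda_j$.

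For \eqref{s402}, with $v$ fixed and $|u| \to \infty$ one has $|w_j| \sim \lambda_j|u|$, producing the bound $|u|^{-\nu\omega d_L}$. For \eqref{s403}, uniformly over $u \in \R$ and $v \geq \tfrac12+\omega+\delta$, the lower bound
\[
|w_j| \geq \Re(w_j-\lambda_j\omega) \geq \lambda_j\bigl(\tfrac12 + v - \omega\bigr) + \Re(\mu_j) \geq c_j\, v
\]
(holding for $v$ above a threshold depending only on $\delta,\omega,\mu_j$, the finite residual range being absorbed into constants) delivers the desired uniform bound $v^{-\nu\omega d_L}$. The only delicate point is verifying that Stirling's error term is uniform as $u$ ranges over $\R$, but this follows from the uniform bound on $\arg(w_j\pm\lambda_j\omega)$, which is precisely where the condition $\Re(\mu_j)\geq 0$ from (S3) is essential — everything else is routine book-keeping.
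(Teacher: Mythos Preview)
Your proof is correct and follows essentially the same approach as the paper: both decompose $\Theta_L^{\omega,\nu}$ into the $L$-ratio, the polynomial factor, the $Q$-power, and the product of gamma ratios, bound the first three uniformly for $\Re(s)>1+\omega$, and then reduce everything to Stirling's formula applied to $\Gamma(\lambda_j(s-\omega)+\mu_j)/\Gamma(\lambda_j(s+\omega)+\mu_j)$. The only difference is presentational --- the paper carries out the Stirling expansion explicitly and tracks each term by hand, whereas you invoke the standard consequence $\Gamma(w-a)/\Gamma(w+a)\sim w^{-2a}$ directly; the substance and the use of $\Re(\mu_j)\geq 0$ to keep $w_j\pm\lambda_j\omega$ in a fixed sector are identical.
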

\begin{proof} 
We have $\xi_L(s)=\xi_L^{\infty}(s)L(s)$ by putting   
\begin{equation} \label{s404}
\xi_L^{\infty}(s)=s^{m_L}(s-1)^{m_L} \cdot \gamma_L(s) =s^{m_L}(s-1)^{m_L} \cdot Q^{s} \prod_{j=1}^{r}\Gamma(\lambda_j s + \mu_j). 
\end{equation}
Then, in order to prove \eqref{s402} and \eqref{s403}, it is sufficient to prove 
\begin{equation} \label{s405}
\left|
\frac{\xi_L^\infty(s-\omega)}{\xi_L^\infty(s+\omega)}
\right| \ll (1+|u|)^{-\omega d_L}
\quad 
\text{and} 
\quad 
\left|
\frac{\xi_L^\infty(s-\omega)}{\xi_L^\infty(s+\omega)}
\right| \ll (1+v)^{-\omega d_L}
\end{equation}
for $s=1/2-i(u+iv)$ with $v \geq 1/2+\omega+\delta$ by \eqref{s202} and \eqref{s205}, 
since $L(s-\omega)/L(s+\omega)$ 
is expressed as an absolutely convergent Dirichlet series 
in a right-half plane $\Re(s)>1+\omega$ and hence it is bounded there. 

Let $\lambda>0$, $\mu=\mu_0+i\mu_1 \in \C$ with $\mu_0 \geq 0$. 
Using the Stirling formula
\begin{equation*} 
\Gamma(s)=\sqrt{\frac{2\pi}{s}}\left(\frac{s}{e}\right)^s(1+O_\delta(|s|^{-1})) \quad 
|s| \geq 1, ~|\arg s| < \pi-\delta
\end{equation*}
for $\delta=\pi/4$ and $s=1/2-i(u+iv)$ ($u \in \R$, $v>0$), we have 
\begin{equation*} 
\aligned 
\left|
\frac{\Gamma(\lambda(s-\omega)+\mu)}{\Gamma(\lambda(s+\omega)+\mu)}
\right|
& = \sqrt{\left|1+\frac{2\lambda\omega}{\lambda(s-\omega)+\mu}\right|}
\cdot\frac{1+O(|\lambda(s-\omega)+\mu|^{-1})}{1+O(|\lambda(s+\omega)+\mu|^{-1})} \cdot e^{2\lambda\omega} \\
& \times 
\exp\Bigl[ - 2\lambda\omega\log\left|\lambda(\tfrac{1}{2}+v-\omega)+\mu_0-i(\lambda u-\mu_1)\right| \\
& \quad + \left(\lambda(\tfrac{1}{2}+v+\omega)+\mu_0\right) \log
\left| 
1- \frac{2\lambda\omega}{\lambda(\tfrac{1}{2}+v+\omega)+\mu_0-i(\lambda u-\mu_1)}
\right| \\
& \quad 
+ (\lambda u-\mu_1) \arg\left( 
1- \frac{2\lambda\omega}{\lambda(\tfrac{1}{2}+v+\omega)+\mu_0-i(\lambda u-\mu_1)} \right)
\Bigr]. 
\endaligned
\end{equation*}
On the right-hand side, 
\begin{equation*}
\sqrt{\left|1+\frac{2\lambda\omega}{\lambda(s-\omega)+\mu}\right|}
\cdot\frac{1+O(|\lambda(s-\omega)+\mu|^{-1})}{1+O(|\lambda(s+\omega)+\mu|^{-1})} \cdot e^{2\lambda\omega},
\end{equation*}
\begin{equation*}
\left(\lambda(\tfrac{1}{2}+v+\omega)+\mu_0\right) \log
\left| 
1- \frac{2\lambda\omega}{\lambda(\tfrac{1}{2}+v+\omega)+\mu_0-i(\lambda u-\mu_1)}
\right| 
\end{equation*}
and
\begin{equation*}
(\lambda u-\mu_1) \arg\left( 
1- \frac{2\lambda\omega}{\lambda(\tfrac{1}{2}+v+\omega)+\mu_0-i(\lambda u-\mu_1)} \right)
\end{equation*}
are uniformly bounded for $u \in \R$ and $v \geq 1/2+\omega+\delta$.  
Hence 
\begin{equation} \label{s406}
\aligned 
\left|
\frac{\Gamma(\lambda(s-\omega)+\mu)}{\Gamma(\lambda(s+\omega)+\mu)}
\right|
\ll
\exp\Bigl[ - 2\lambda\omega\log\left|\lambda(\tfrac{1}{2}+v-\omega)+\mu_1-i(\lambda u-\mu_2)\right| \Bigr],
\endaligned
\end{equation}
where the implied constant depends on $\omega$, $\lambda$, $\mu$ and $\delta >0$. 
This implies the first estimate of \eqref{s405} by \eqref{s404} and the definition of $d_L$. 
On the other hand, the right-hand side of \eqref{s406} takes the maximum 
$[\lambda(\tfrac{1}{2}+v-\omega)+\mu_1]^{-2\lambda\omega}$ 
at $u=\mu_1/\lambda$ as a function of $u$. 
Hence 
\begin{equation*}
\aligned 
\left|
\frac{\Gamma(\lambda(s-\omega)+\mu)}{\Gamma(\lambda(s+\omega)+\mu)}
\right|
\ll
(1+v)^{-2\lambda\omega}
\endaligned
\end{equation*}
holds uniformly for $u \in \R$ and $v \geq 1/2+\omega+\delta$, 
where the implied constant depends on $\omega$, $\lambda$, $\mu$ and $\delta>0$. 
This implies the second estimate of \eqref{s405} by \eqref{s404} 
and the definition of $d_L$. 
\end{proof}

Let $a_L$ be the arithmetic function defined by the Dirichlet series \eqref{s201} of $L \in \mathcal{S}_{\R}$. 
Then, by $a_L(1) = 1 \not=0$, the Dirichlet inverse $a_{L}^{-1}$ exists and is given by 
$a_{L}^{-1}(n) = - \sum_{d|n,\,d<n} a_L(n/d)a_L^{-1}(d)$ for $n>1$ and $a_L^{-1}(1)=1$. 
We have 
\begin{equation*}
L(s)^{\pm k} = \sum_{n=1}^{\infty} \frac{a_L^{\pm k}(n)}{n^s}, \quad 
a_L^{\pm k} = \underbrace{ a_L^{\pm 1} \ast \cdots \ast a_L^{\pm 1} }_{\text{$k$ times}}
\end{equation*}
for any positive integer $k$, where $a_L^1=a_L$ and $\ast$ is the Dirichlet convolution for arithmetic functions. 
Using these arithmetic functions, we define
\begin{equation} \label{s407}
q_L^{\omega,\nu}(n) := n^\omega \sum_{d|n} 
\frac{a_L^\nu(n/d) a_L^{-\nu}(d)}{d^{2\omega}} 
\end{equation}
for natural numbers $n$. 

Next, we introduce the function $g_{\omega,\lambda,\mu}$ defined on the positive real line by
\begin{equation*}
g_{\omega,\lambda,\mu}(y) 
= \frac{1}{\lambda \Gamma(2\lambda \omega)} \, 
y^{\omega - \frac{1}{2} - \frac{\mu}{\lambda}} (1- y^{-\frac{1}{\lambda}})^{2 \lambda \omega -1}
\end{equation*}
for $y>1$ and $g_{\omega,\lambda,\mu}(y)=0$ for $0<y<1$. Then, we define 
\begin{equation*}
\tilde{g}_L^{\omega} = g_{\omega,\lambda_1,\mu_1} \ast g_{\omega,\lambda_2,\mu_2} \ast \cdots \ast g_{\omega,\lambda_r,\mu_r},
\end{equation*}
\begin{equation*}
\tilde{g}_L^{\omega, \nu} = Q^{-2\nu\omega}\cdot\underbrace{\tilde{g}_L^{\omega} \ast \cdots \ast \tilde{g}_L^{\omega}}_{\text{$\nu$ times}}
\end{equation*}
by using quantities $r$, $\lambda_j$, $\mu_j$ and $Q$ in (S3), 
where $\ast$ is the multiplicative convolution $(f\ast g)(x)=\int_{0}^{\infty} f(x/y)g(y) \, \frac{dy}{y}$.  
In addition, using the partial fraction decomposition
\begin{equation*} 
\left( \frac{(s-\omega)(s-\omega-1)}{(s+\omega)(s+\omega-1)} \right)^{\nu m_L} 
= 1 + \sum_{k=1}^{\nu m_L} \left( \frac{X_k(\omega)}{(s+\omega-1)^k} + \frac{Y_k(\omega)}{(s+\omega)^k} \right), 
\end{equation*}
we define
\begin{equation*}
r_L^{\omega,\nu}(y)= \delta_{0}(y) + \sum_{k=1}^{\nu m_L} \frac{1}{(k-1)!}
\left( X_k(\omega) \, y^{1/2} + Y_{k}(\omega) \, y^{-1/2} \right) 
y^{-\omega}(\log y)^{k-1} \mathbf{1}_{[1,\infty)}(y)
\end{equation*}
for $y>1$ and $r_L^{\omega,\nu}(y)=0$ for $0<y<1$, 
where $\delta_0$ is the Dirac mass at the origin. 

We now define 
\begin{equation*}
g_L^{\omega,\nu} := r_L^{\omega,\nu} \ast \tilde{g}_L^{\omega, \nu}. 
\end{equation*}
Then $g_L^{\omega, \nu}$ is a $C^\infty$-function on $(1,\infty)$ and vanishes on $(0,1)$. 
The behavior of $g_L^{\omega, \nu}$ near $y=1$ is singular if $\nu>0$ is small, 
but if $\nu$ is sufficiently large with respect to $\omega$, 
$g_L^{\omega,\nu}$ is continuous at $y=1$. 

Finally, we define the function $K_L^{\omega,\nu}$ on the real line by 
\begin{equation} \label{s408}
\aligned
k_L^{\omega,\nu}(y) & = \epsilon_L^\nu \sum_{n=1}^{\lfloor y \rfloor} \frac{q_L^{\omega,\nu}(n)}{\sqrt{n}} \, 
g_L^{\omega,\nu}\left(\frac{y}{n}\right), \\ 
K_L^{\omega,\nu}(x) & = k_L^{\omega,\nu}(e^x)
= \epsilon_L^\nu \sum_{n=1}^{\lfloor \exp(x) \rfloor} \frac{q_L^{\omega,\nu}(n)}{\sqrt{n}} \, 
G_L^{\omega,\nu}(x-\log n)
\endaligned 
\end{equation}
for $x>0$, and $K_L^{\omega,\nu}(x)=0$ for $x<0$, 
where $G_L^{\omega,\nu}(x) = g_L^{\omega,\nu}(\exp(x))$. 
The value $K_L^{\omega,\nu}(0)$ may be undefined if $\nu$ is small with respect to $\omega>0$, 
but it is understood as $K_L^{\omega,\nu}(0)=0$ for large $\nu$,  
since $q_L^{\omega,\nu}(1)=1$ and $g_L^{\omega,\nu}(1) = 0$ if $\nu$ is large.

\begin{proposition} \label{prop_4_1} 
Define $K_L^{\omega,\nu}$ as above 
for $L \in \mathcal{S}_\R$ and $(\omega,\nu) \in \R_{>0} \times \Z_{>0}$. 
Then, 
\begin{enumerate}
\item $K_L^{\omega,\nu}$ is a continuous real-valued function on $\R \setminus \{ \log n \,|\, n \in \N \}$ 
vanishing on the negative real line $(-\infty,0)$,
\item $K_L^{\omega,\nu}$ is continuously differentiable on $\R \setminus \{ \log n \,|\, n \in \N \}$, 
\item the Fourier integral formula 
\begin{equation} \label{s409}
\Theta_L^{\omega,\nu}(z)
= (\mathsf{F}K_L^{\omega,\nu})(z) 
= \int_{0}^{\infty} K_L^{\omega,\nu}(x) \, e^{izx} \, dx 
\end{equation}
holds for $\Im(z)>1/2+\omega$ with the absolute convergence of the integral on the right-hand side. 
In particular, $K_L^{\omega,\nu}$ of \eqref{s408} coincides with the function defined in \eqref{s206} 
by taking the Fourier inversion formula of \eqref{s409}.
\end{enumerate}

Suppose that $(\omega, \nu)$ satisfies the condition \eqref{s208}. Then, 
\begin{enumerate}
\item[(4)] $K_L^{\omega,\nu}$ is a continuous function on $\R$, 
\item[(5)] $|K_L^{\omega,\nu}(x)| \ll \exp(c|x|)$ for some $c>0$, 
\item[(6)] $\vert \frac{d}{dx}K_L^{\omega,\nu}(x) \vert$ is locally integrable. 
\end{enumerate}
Hence, $E_L^{\omega,\nu}$ satisfies (K1)$\sim$(K4) under \eqref{s208} by combining with Lemma \ref{lem_4_1}. 

Moreover, if $\omega$ and $\nu$ satisfy $\nu \omega d_L > k+1$ for some $k \in \N$, 
$K_L^{\omega,\nu}$ belongs to $C^k(\R)$ 
\end{proposition}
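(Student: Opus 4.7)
The plan is to prove Proposition \ref{prop_4_1} by first deriving the explicit Dirichlet series form (4.8) from the Fourier integral definition (2.6), and then reading off the stated analytic properties from that explicit form, using the factorization $\xi_L = \xi_L^\infty \cdot L$ into archimedean and non-archimedean parts.

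\textbf{Phase 1 (deriving (4.8) from (2.6)).} For $\Im(z) = v > 1/2 + \omega$, set $s = 1/2 - iz$ and write
\[
\Theta_L^{\omega,\nu}(z)
= \epsilon_L^\nu \cdot \frac{\xi_L^\infty(s-\omega)^\nu}{\xi_L^\infty(s+\omega)^\nu} \cdot \frac{L(s-\omega)^\nu}{L(s+\omega)^\nu}.
\]
In this region $L(s\pm\omega)^{\pm\nu}$ are given by absolutely convergent Dirichlet series, and a Dirichlet convolution computation matches their quotient with $\sum_{n \geq 1} q_L^{\omega,\nu}(n) n^{-s}$ via definition (4.7). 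Substituting this into (2.6) and interchanging sum and integral (legal by Lemma \ref{lem_4_2} and absolute convergence), the integral reduces to
\[
\frac{1}{2\pi}\!\int_{\Im z = v}\! \frac{\xi_L^\infty(\tfrac12-\omega-iz)^\nu}{\xi_L^\infty(\tfrac12+\omega-iz)^\nu}\, e^{iz(\log n - x)}\, dz.
\]
I would identify this via Mellin--Barnes inversion with $G_L^{\omega,\nu}(x-\log n)$: each basic ratio $\Gamma(\lambda_j(s-\omega)+\mu_j)/\Gamma(\lambda_j(s+\omega)+\mu_j)$ inverts to $g_{\omega,\lambda_j,\mu_j}$ (a beta-integral calculation supported on $y \geq 1$), the product of gamma ratios becomes the multiplicative convolution $\tilde g_L^{\omega,\nu}$, and the $s^{m_L}(s-1)^{m_L}$ poles produce the $r_L^{\omega,\nu}$ correction via partial fractions. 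This yields (4.8) and simultaneously proves (3).

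\textbf{Phase 2 (claims (1), (2) and the Fourier formula).} Since $g_L^{\omega,\nu}$ is a convolution of functions supported on $[1,\infty)$, it is supported on $[1,\infty)$ and $C^\infty$ on $(1,\infty)$; hence $G_L^{\omega,\nu}$ vanishes on $(-\infty,0)$ and is $C^\infty$ on $(0,\infty)$. The defining sum in (4.8) is finite for each fixed $x$ (only $n \leq e^x$ contribute), each summand is smooth in $x$ except at $x = \log n$, where a new term enters but with argument $G_L^{\omega,\nu}(0^+)$; this yields (1) and (2).

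\textbf{Phase 3 (claims (4)--(6) and the $C^k$ statement).} The decisive issue is the behavior of $G_L^{\omega,\nu}$ near $x = 0^+$, equivalently $g_L^{\omega,\nu}(y)$ near $y=1^+$. The elementary asymptotic $g_{\omega,\lambda,\mu}(y) \sim c\,(\log y)^{2\lambda\omega-1}$ as $y \to 1^+$, combined with the beta-function identity $\int_0^t u^{a-1}(t-u)^{b-1}du = B(a,b)\,t^{a+b-1}$ applied repeatedly to the multiplicative convolution, gives
\[
\tilde g_L^{\omega,\nu}(y) \sim C\,(\log y)^{2\nu(\lambda_1+\cdots+\lambda_r)\omega - 1} = C\,(\log y)^{\nu\omega d_L - 1} \quad (y \to 1^+),
\]
and $r_L^{\omega,\nu}$ is smooth near $y=1$, so $g_L^{\omega,\nu}$ has the same asymptotic. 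Under \eqref{s208} this tends to $0$, killing the potential jump at each $x = \log n$ and proving (4); under $\nu\omega d_L > k+1$, derivatives up to order $k$ vanish at $y = 1^+$, proving the $C^k$ assertion. For (5), use the Ramanujan bound (S4) to obtain $|q_L^{\omega,\nu}(n)| \ll n^{\omega+\varepsilon}$ and the explicit form to get a polynomial bound on $|g_L^{\omega,\nu}(y)|$ for large $y$; summing (4.8) gives $|K_L^{\omega,\nu}(x)| \ll e^{cx}$. Claim (6) follows by differentiating (4.8) term-by-term: the derivative inherits a $(\log y)^{\nu\omega d_L-2}$ singularity at $y=1^+$, which is locally integrable under \eqref{s208}.

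\textbf{Main obstacle.} The hard part is Phase 1 combined with the local analysis at $y=1$ in Phase 3: rigorously identifying the inverse Fourier transform of the archimedean factor $\xi_L^\infty(s-\omega)^\nu/\xi_L^\infty(s+\omega)^\nu$ with the explicit convolution $r_L^{\omega,\nu}*\tilde g_L^{\omega,\nu}$ requires careful contour shifts that pick up residues at $s = 0,1$ (producing $r_L^{\omega,\nu}$) and an iterated Mellin-inversion/convolution argument for the gamma ratios. The subsequent control of endpoint behavior of this convolution, which governs the threshold $\nu\omega d_L > 1$ (and more generally $> k+1$), is the main technical content and is essential for everything in later sections that depends on the operator $\mathsf{K}_L^{\omega,\nu}[t]$ being well-defined with a continuous kernel.
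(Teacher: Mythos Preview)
Your approach is essentially correct and for parts (1)--(3) matches the paper's argument (the paper computes the Mellin/Fourier transform of the explicit convolution $k_L^{\omega,\nu}$ and identifies it with $\Theta_L^{\omega,\nu}$, which is exactly your Phase~1 in the forward direction rather than via inversion). One small inaccuracy: $r_L^{\omega,\nu}$ is not smooth near $y=1$---it carries a Dirac mass there (the identity for multiplicative convolution)---but your conclusion that $g_L^{\omega,\nu}$ inherits the $(\log y)^{\nu\omega d_L -1}$ behavior of $\tilde g_L^{\omega,\nu}$ at $y=1^+$ survives, since the delta contributes $\tilde g_L^{\omega,\nu}$ itself and the remaining smooth part of $r_L^{\omega,\nu}$ convolves to something smoother.

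Where your route genuinely diverges from the paper is in Phase~3. For (4), (5), and the $C^k$ statement the paper bypasses all endpoint analysis of the convolution and instead argues directly from the Fourier integral representation \eqref{s206} together with the decay estimate $|\Theta_L^{\omega,\nu}(u+iv)|\ll |u|^{-\nu\omega d_L}$ of Lemma~\ref{lem_4_2}: when $\nu\omega d_L>1$ the integrand is in $L^1(\R)$ in $u$, so $K_L^{\omega,\nu}$ is automatically continuous and bounded by $e^{vx}$; when $\nu\omega d_L>k+1$ one may differentiate under the integral $k$ times. This is considerably shorter than your beta-integral asymptotics, and it makes transparent why the single threshold $\nu\omega d_L$ governs everything. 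Your explicit local analysis has the compensating advantage of giving the precise singularity type of $K_L^{\omega,\nu}$ and its derivatives at each $x=\log n$, which the Fourier argument does not, and which is what you actually use to prove (6); the paper's proof of (6) instead appeals to monotonicity of $g_L^{\omega,\nu}$ near the singular points to pass from local integrability of $K'$ to that of $|K'|$.
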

\begin{proof} Properties (1) and (2) are trivial by definition, 
and (5) is a simple consequence of \eqref{s206} and \eqref{s402}. 
To prove (3), we use the variable $s=1/2-iz$ for convenience. 
If $\omega>0$, $\lambda>0$, $\Re(\mu) \geq 0$, we have
\begin{equation} \label{s410}
\frac{\Gamma(\lambda (s-\omega) + \mu)}{\Gamma(\lambda(s+\omega) + \mu)}
= \frac{1}{\lambda \Gamma(2\lambda \omega)}
\int_{1}^{\infty} y^{\omega - \frac{1}{2} - \frac{\mu}{\lambda}}(1-y^{-\frac{1}{\lambda}})^{2 \lambda \omega -1} \, y^{\frac{1}{2}-s} \frac{dy}{y} 
\end{equation}
for $\Re(s)>\omega - \Re(\mu)/\lambda$ by \cite[(5.35) of p.195]{Ob}. 
Therefore, we obtain 
\begin{equation} \label{s411}
\aligned
\int_{0}^{\infty} \tilde{g}_L^{\omega, \nu}(y) \cdot y^{\frac{1}{2}-s} \, \frac{dy}{y}
= 
\left( Q^{-2\omega}
\prod_{j=1}^{r}
\frac{\Gamma(\lambda_j (s-\omega) + \mu_j)}{\Gamma(\lambda_j(s+\omega) + \mu_j)}
\right)^\nu 
\endaligned
\end{equation}
for $\Re(s)>\max_{1 \leq j \leq r}(\omega-\Re(\mu_j)/\lambda_j)$ by applying \cite[Theorem 44]{Tit2} repeatedly to \eqref{s410}. 

Applying the formula 
\begin{equation*}
\frac{1}{(k-1)!}\int_{1}^{\infty} y^{-a-\frac{1}{2}}(\log y)^{k-1} \cdot y^{\frac{1}{2}-s} \frac{dy}{y} = \frac{1}{(s+a)^k}, \quad \Re(s+a)>0
\end{equation*}
to $r_L^{\omega,\nu}$, we have 
\begin{equation} \label{s412}
\int_{1}^{\infty} r_L^{\omega,\nu}(y) \cdot y^{\frac{1}{2}-s} \frac{dy}{y} 
= \left( \frac{(s-\omega)(s-\omega-1)}{(s+\omega)(s+\omega-1)} \right)^{\nu m_L}
\end{equation}
for $\Re(s)>1-\omega$ (we assumed $\omega>0$). 

Then we obtain 
\begin{equation*}
\aligned
\int_{0}^{\infty} & g_L^{\omega, \nu}(y) \cdot y^{\frac{1}{2}-s} \, \frac{dy}{y}
 =
\int_{1}^{\infty} g_L^{\omega, \nu}(y) \cdot y^{\frac{1}{2}-s} \, \frac{dy}{y} \\
= &
\left( \frac{(s-\omega)(s-\omega-1)}{(s+\omega)(s+\omega-1)} \right)^{\nu m_L}
\left( Q^{-2\omega}
\prod_{j=1}^{r}
\frac{\Gamma(\lambda_j (s-\omega) + \mu_j)}{\Gamma(\lambda_j(s+\omega) + \mu_j)}
\right)^\nu 
 = 
\left(
\frac{\xi_L^\infty(s-\omega)}{\xi_L^\infty(s+\omega)}
\right)^\nu
\endaligned
\end{equation*}
for $\Re(s)>{\rm max}(1-\omega,\max_{1 \leq j \leq r}(\omega-\Re(\mu_j)/\lambda_j))$ 
by applying \cite[Theorem 44]{Tit2} to \eqref{s411} and \eqref{s412}. 
On the other hand, we have  
\begin{equation*}
\aligned
\left( \frac{L(s-\omega)}{L(s+\omega)} \right)^\nu  
 & = \sum_{m=1}^{\infty}\frac{a_L^\nu(m)m^\omega}{m^s}\sum_{n=1}^{\infty}\frac{a_L^{-\nu}(n)n^{-\omega}}{n^s} \\
 & = \sum_{n=1}^{\infty} \frac{1}{n^s} \left( n^\omega \sum_{d|n} \frac{a_L^\nu(n/d) a_L^{-\nu}(d)}{d^{2\omega}} \right)
 = \sum_{n=1}^{\infty} \frac{q_L^{\omega,\nu}(n)}{n^s}
\endaligned
\end{equation*}
by definition \eqref{s407}, where the series converges absolutely for $\Re(s)>1+\omega$. 
By definition \eqref{s408}, we have formally 
\begin{equation*}
\aligned
\int_{0}^{\infty} k_L^{\omega,\nu}(y) \, y^{\frac{1}{2}-s} \, \frac{dy}{y} 
&= \epsilon_L^\nu\sum_{n=1}^{\infty} \frac{c_L^{\omega,\nu}(n)}{n^s} \int_{0}^{\infty} g_L^{\omega,\nu}(y/n) \, (y/n)^{\frac{1}{2}-s} \, \frac{dy}{y} \\
&= \epsilon_L^\nu\left( \frac{\xi_L^\infty(s-\omega)}{\xi_L^\infty(s+\omega)} \frac{L(s-\omega)}{L(s+\omega)} \right)^\nu,
\endaligned
\end{equation*}
and it is justified by Fubini's theorem for $\Re(s)>1+\omega$. 
By the changing of variables $y=e^x$ and $s=1/2-iz$, we obtain \eqref{s409} 
and complete the proof of (3). 

We prove (4) and the last line of Proposition \ref{prop_4_1}. 
By \eqref{s402}, $\Theta_L^{\omega,\nu}(u+iv)$ belongs to $L^1(\R)$ as a function of $u$ if $v$ is sufficiently large. 
Thus, $K_L^{\omega,\nu}$ is uniformly continuous on $\R$ by \eqref{s206}. 
Moreover, by \eqref{s206} and \eqref{s402}, the formula 
\begin{equation*}
\frac{d^n}{dx^n}K_L^{\omega,\nu}(x) = \frac{1}{2\pi} \int_{\Im(z)=c} \Theta_L^{\omega,\nu}(z) (-iz)^n \, e^{-izx} \, dz 
\end{equation*}
holds together with the absolute convergence of the integral on the right-hand side. 
Therefore, this   
shows that $K_L^{\omega,\nu}$ is $C^k$ if $\nu\omega d_L>k+1$. 

Finally, we prove (6). The derivative $\frac{d}{dx}K_L^{\omega,\nu}$ is locally integrable by (4). 
On the other hand, the set of possible singularities of $\frac{d}{dx}K_L^{\omega,\nu}$ is discrete in $\R$ by (2), 
and $\frac{d}{dx}K_L^{\omega,\nu}$ does not change its sign infinitely often around any possible singularity except for $x=+\infty$ 
by definition of $g_L^{\omega,\nu}$. Therefore, the local integrability of $\frac{d}{dx}K_L^{\omega,\nu}$ 
implies the local integrability of $|\frac{d}{dx}K_L^{\omega,\nu}|$. 
\end{proof}

By Proposition \ref{prop_4_1}, we find that $E_L^{\omega,\nu}$ satisfies (K1)$\sim$(K4) if $(\omega, \nu)$ satisfies \eqref{s208}. 
Successively, we show that $E_L^{\omega,\nu}$ satisfies (K5) for sufficiently small $\tau>0$ unconditionally. 

%
%
\subsection{Non-vanishing of Fredholm determinants: Unconditional cases} 
%
%

In this section, we understand that $\mathsf{K}_L^{\omega,\nu}f$ 
is a function defined by the integral 
\begin{equation} \label{s302}
({\mathsf K}_L^{\omega,\nu}f)(x) = \int_{-\infty}^{\infty} K_L^{\omega,\nu}(x+y) \, f(y) \, dy
\end{equation}
if the right-hand side converges absolutely and locally uniform for a function $f$, 
because we do not assume that $E_L^{\omega,\nu} \in \overline{\mathbb{HB}}$ 
(which implies that $\mathsf{K}_L^{\omega,\nu}f$ belongs to $L^2(\R)$ 
for every $f \in L^2(\R)$ 
by \cite[Lemmas 2.1 and 2.2]{Su19_1}). 

\begin{proposition} \label{prop_4_2} 
Let $L \in \mathcal{S}_\R$. 
Suppose that $(\omega, \nu)$ satisfies \eqref{s208} and define the operator $\mathsf{K}_L^{\omega,\nu}[t]$ 
on $L^2(-\infty,t)$ by \eqref{s209}. 
Then, there exists $\tau=\tau(L;\omega,\nu)>0$ such that both $\pm 1$ are not eigenvalues of $\mathsf{K}_L^{\omega,\nu}[t]$ 
for every $0 \leq t < \tau$, 
that is, both $1 \pm {\mathsf K}_L^{\omega,\nu}[t]$ are invertible operators on $L^2(-\infty,t)$ 
for every $0 \leq t < \tau$. 
Thus $E_L^{\omega,\nu}$ satisfies (K5) for $[0,\tau)$. 
\end{proposition}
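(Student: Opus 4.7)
The plan is to bound the operator norm of $\mathsf{K}_L^{\omega,\nu}[t]$ via its Hilbert--Schmidt norm and observe that this bound tends to zero as $t \to 0^+$. Once $\|\mathsf{K}_L^{\omega,\nu}[t]\|_{\mathrm{op}} < 1$, no value of modulus $1$ can lie in the spectrum of this compact self-adjoint operator, and in particular $\pm 1$ are not eigenvalues, so $1 \pm \mathsf{K}_L^{\omega,\nu}[t]$ is invertible. Hence the assertion reduces to producing any $\tau > 0$ on which the Hilbert--Schmidt norm stays strictly below $1$.

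The first step is to extract from Proposition \ref{prop_4_1} the fact that $K_L^{\omega,\nu}(0) = 0$. Part (1) asserts vanishing on $(-\infty,0)$, while part (4) --- valid precisely under the standing hypothesis \eqref{s208} --- asserts continuity on all of $\R$; together they force $K_L^{\omega,\nu}(0) = 0$ by continuity from the left.

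The second step is to reuse the elementary Hilbert--Schmidt estimate already displayed in Section \ref{section_3_1}, namely
\begin{equation*}
\|\mathsf{K}_L^{\omega,\nu}[t]\|_{\mathrm{HS}}^2 \le 2t \int_0^{2t} |K_L^{\omega,\nu}(x)|^2 \, dx \le 4t^2 \sup_{x \in [0,2t]} |K_L^{\omega,\nu}(x)|^2.
\end{equation*}
The continuity of $K_L^{\omega,\nu}$ on $\R$ and the vanishing $K_L^{\omega,\nu}(0) = 0$ established above force the supremum on the right to tend to $0$ as $t \to 0^+$, so $\|\mathsf{K}_L^{\omega,\nu}[t]\|_{\mathrm{HS}} \to 0$. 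Choosing $\tau = \tau(L;\omega,\nu) > 0$ so that this norm stays below $1$ on $[0,\tau)$ then yields (K5).

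There is no serious obstacle here: the argument is essentially a quantitative reflection of the facts that $\mathsf{K}_L^{\omega,\nu}[0] = 0$ and that $t \mapsto \mathsf{K}_L^{\omega,\nu}[t]$ is continuous in Hilbert--Schmidt norm at $t = 0$. All the genuinely analytic content --- the continuity of $K_L^{\omega,\nu}$ up to and including the origin --- is already packaged into Proposition \ref{prop_4_1}, which is the substantive input; the proof of Proposition \ref{prop_4_2} itself is only a short deduction from that input.
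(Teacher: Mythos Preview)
Your proof is correct and considerably more elementary than the paper's. The paper argues by contradiction: assuming $\mathsf{P}_t\mathsf{K}_L^{\omega,\nu}f=\pm f$ for some nonzero $f\in L^2(-\infty,t)$, it shows $f$ is supported in $[-t,t]$, passes to the Fourier side, and uses the decay estimate $|\Theta_L^{\omega,\nu}(u+iv)|\ll v^{-\nu\omega d_L}$ from Lemma~\ref{lem_4_2} to force $M_v^2 e^{4vt}<1$ for small $t$, contradicting an $L^2$ inequality. Your route bypasses the Fourier analysis and Lemma~\ref{lem_4_2} entirely: you simply observe that the Hilbert--Schmidt norm of $\mathsf{K}_L^{\omega,\nu}[t]$ tends to zero as $t\to0^+$, which already follows from the estimate $\|\mathsf{K}_L^{\omega,\nu}[t]\|_{\mathrm{HS}}^2\le 2t\int_0^{2t}|K_L^{\omega,\nu}|^2$ displayed in Section~\ref{section_3_1} together with continuity of $K_L^{\omega,\nu}$ (Proposition~\ref{prop_4_1}(4)). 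One minor remark: your first step, extracting $K_L^{\omega,\nu}(0)=0$, is true but unnecessary---the factor $4t^2$ in your bound already drives the product to zero even if the supremum merely stays bounded. What the paper's approach buys is a technique that does not rely on continuity of the kernel at the origin and that foreshadows the Fourier-analytic machinery used later (e.g.\ in Proposition~\ref{prop_4_4}); what your approach buys is transparency and minimal hypotheses, making it clear that Proposition~\ref{prop_4_2} is really just the soft statement that a norm-continuous family of compact operators vanishing at $t=0$ has small norm nearby.
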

\begin{proof} 
The spectrum of ${\mathsf K}_L^{\omega,\nu}[t]$ is discrete and consists of eigenvalues, 
since ${\mathsf K}_L^{\omega,\nu}[t]$ is a Hilbert-Schmidt operator on $L^2(-\infty,t)$ 
by (K2) and (K3). 
The statement of the proposition 
is equivalent that ${\mathsf K}_L^{\omega,\nu}[t] f \not= \pm f$ for any $f \in L^2(-\infty,t)$, 
because $1 - \mu{\mathsf K}_L^{\omega,\nu}[t]$ is invertible if $1/\mu$ is not an eigenvalue. 
In addition, ${\mathsf K}_L^{\omega,\nu}[t] f \not= \pm f$ is equivalent that ${\mathsf P}_t {\mathsf K}_L^{\omega,\nu} f \not= \pm f$, 
since ${\mathsf P}_t f = f$ for $f \in L^2(-\infty,t)$, 
where ${\mathsf P}_t$ is the orthogonal projection from $L^2(\R)$ to $L^2(-\infty,t)$. 

Suppose that ${\mathsf P}_t{\mathsf K}_L^{\omega,\nu}f  = \pm f $ for some $0 \not= f \in L^2(-\infty,t)$. 
We have 
\begin{equation*}
({\mathsf P}_t{\mathsf K}_L^{\omega,\nu} f)(x) 
=  {\mathbf 1}_{(-\infty,t)}(x)\int_{-\infty}^{t} K_L^{\omega,\nu}(x+y)f(y)\, dy = 0 
\end{equation*}
for $-\infty<x<-t$ and $f\in L^2(-\infty,t)$. 
Therefore, ${\mathsf P}_t{\mathsf K}_L^{\omega,\nu}f$ is a function on $\R$ 
having support in $[-t,\infty)$, 
and hence the assumption implies that $f$ has a compact support contained in $[-t,t]$. 

We put $g={\mathsf K}_L^{\omega,\nu}f$. 
Then, we have 
$({\mathsf F} g)(z)  = \Theta_L^{\omega,\nu}(z)({\mathsf F}f)(-z)$
for $\Im(z)>1/2+\omega$ by  \cite[Lemma 2.2]{Su19_1}. 
This means that 
\begin{equation*} 
({\mathsf F}g_{-v})(u)  = \Theta_L^{\omega,\nu}(u+iv)({\mathsf F}f_{v})(-u) 
\end{equation*}
for $u \in \R$ if $v>1/2+\omega$, where we put $g_{-v}(x) = g(x)e^{-xv}$ and $f_{v}(x) = f(x)e^{xv}$. 
%
%
Therefore, we have 
\begin{equation*} 
\aligned 
\Vert {\mathsf F}g_{-v} \Vert^2 
& = \Vert \Theta_L^{\omega,\nu}(\cdot+iv)({\mathsf F}f_{v})(-\cdot) \Vert^2 \\
& \leq M_v^2 \Vert {\mathsf F}f_{v} \Vert^2 = 2\pi M_v^2 \Vert f_{v} \Vert^2
=  2\pi M_v^2 \int_{-\infty}^{t} |f(x)|^2 e^{2vx} \, dx \\
&\leq 2\pi M_v^2 e^{2vt} \int_{-\infty}^{t} |f(x)|^2\, dx = 2\pi M_v^2 e^{2vt} \Vert f \Vert^2, 
\endaligned
\end{equation*}
where $\Vert \cdot \Vert = \Vert \cdot \Vert_{L^2(\R)}$ and $M_v = \max_{u \in \R}|\Theta_L^{\omega,\nu}(u+iv)|$. 
Therefore, we have  
\begin{equation} \label{s413}
\Vert g_{-v} \Vert \leq M_v e^{vt} \Vert f \Vert
\end{equation}
by $\Vert {\mathsf F}g_{-v} \Vert^2 = 2\pi \Vert g_{-v} \Vert^2$. 
On the other hand, the equality ${\mathsf P}_t{\mathsf K}_L^{\omega,\nu}f  = \pm f $ implies 
\[
\Vert {\mathsf P}_tg_{-v} \Vert^2 = \Vert f_{-v} \Vert^2 
= \int_{-\infty}^{t} |f(x)|^2 e^{-2vx} \, dx
\geq e^{-2vt} \int_{-\infty}^{t} |f(x)|^2  \, dx = e^{-2vt} \Vert f \Vert^2 
\]
for every $v>0$. Therefore, 
\begin{equation} \label{s414}
e^{vt} \Vert{\mathsf P}_tg_{-v} \Vert  \geq  \Vert f \Vert.  
\end{equation}
By \eqref{s413} and \eqref{s414}, we have
$\Vert g_{-v} \Vert \leq M_v e^{2vt} \Vert{\mathsf P}_tg_{-v} \Vert$ 
which implies 
\begin{equation*}
\int_{-t}^{\infty} |g(x)|^2 e^{-2vx} \, dx \leq M_v^2 e^{4vt}\int_{-t}^{t} |g(x)|^2 e^{-2vx} \, dx, 
\end{equation*}
since $g$ has support in $[-t,\infty)$. By \eqref{s403}, we have 
\begin{equation*}
 M_v^2 e^{4vt} \ll \exp\Bigl(4vt-2\nu\omega d_L\log (1+v)\Bigr).
\end{equation*}
Therefore, $M_v^2 e^{4vt}<1$ and thus
\begin{equation*}
\int_{-t}^{\infty} |g(x)|^2 e^{-2vx} \, dx < \int_{-t}^{t} |g(x)|^2 e^{-2vx} \, dx. 
\end{equation*}
if $t>0$ is sufficiently small with respect to fixed $v > 1/2+\omega$.  
This is a contradiction. Hence, ${\mathsf P}_t{\mathsf K}_L^{\omega,\nu}f  = \pm f$ is impossible for every sufficiently small $t>0$.  
\end{proof}

%
%
\subsection{Further analytic properties of $\Theta_L^{\omega,\nu}$} 
%
%

Let $H^\infty=H^\infty(\C_+)$ be the space of all bounded analytic functions in $\C_+$. 
A function $\Theta \in H^\infty$  
is called an {\it inner function} in $\C_+$ 
if $\lim_{y \to 0+}|\Theta(x+iy)|=1$ for almost all $x \in \R$ 
with respect to the Lebesgue measure. 
If an inner function $\Theta$ in $\C_+$ is extended to a meromorphic function in $\C$, 
it is called a {\it meromorphic inner function} in $\C_+$. 

\begin{proposition} \label{prop_4_3}
Let $L \in \mathcal{S}_\R$, $\omega_0 \geq 0$ and $\nu \in \Z_{>0}$. 
Then the following statements are equivalent: 
\begin{enumerate}
\item $L(s)\not=0$ for $\Re(s)>\frac{1}{2}+\omega_0$, 
\item $E_L^{\omega,\nu}$ belongs to $\mathbb{HB}$ for every $\omega>\omega_0$, 
\item $\Theta_L^{\omega,\nu}$ is a meromorphic inner function in $\C_+$ for every $\omega>\omega_0$. 
\end{enumerate}
The value of $\nu$ does not affect the above equivalence. 
\end{proposition}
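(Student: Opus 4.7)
The plan is to prove the three-way equivalence via the cyclic chain $(1) \Rightarrow (2) \Rightarrow (3) \Rightarrow (1)$. The pivotal general fact is Lemma \ref{lem_3_1}, which identifies meromorphic inner functions in $\C_+$ with quotients $E^\sharp/E$ arising from $E \in \overline{\mathbb{HB}}$. Everything else reduces to carefully tracking the substitution $s = \tfrac{1}{2} + \omega - iz$ and its effect on the locations of the zeros of $\xi_L$.

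For $(1) \Rightarrow (2)$, I would first use the functional equation $\xi_L(s) = \epsilon_L \xi_L(1-s)$ from \eqref{s203} to note that the zero set of $\xi_L$ is symmetric about the line $\Re(s) = 1/2$, so (1) forces all zeros of $\xi_L$ to lie in the closed strip $\tfrac{1}{2} - \omega_0 \le \Re(s) \le \tfrac{1}{2} + \omega_0$. Under $s = \tfrac{1}{2} + \omega - iz$ this strip corresponds, in the $z$-plane, to the horizontal strip $-\omega - \omega_0 \le \Im(z) \le \omega_0 - \omega$, which sits strictly inside $\C_-$ whenever $\omega > \omega_0$. Hence $E_L^{\omega,\nu}$ has no zeros in $\overline{\C_+}$, in particular no real zeros. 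To upgrade this to the strict Hermite--Biehler inequality $|(E_L^{\omega,\nu})^\sharp(z)| < |E_L^{\omega,\nu}(z)|$ on $\C_+$, I would follow the argument used to prove Proposition \ref{prop_2_2} and apply \cite[Theorem 4]{LS} to $\xi_L(s)^\nu$, which satisfies a functional equation of the type (S3) with root number $\epsilon_L^\nu$ and whose zeros all lie in a strip of width $2\omega_0$ about the critical line.

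The implication $(2) \Rightarrow (3)$ is immediate from Lemma \ref{lem_3_1} together with the inclusion $\mathbb{HB} \subset \overline{\mathbb{HB}}$. For the closing arrow $(3) \Rightarrow (1)$, I would focus on the pole structure of $\Theta_L^{\omega,\nu}$: if it is meromorphic inner in $\C_+$ for every $\omega > \omega_0$, then for each such $\omega$ it is analytic in $\C_+$, and its poles are precisely the zeros of $E_L^{\omega,\nu}(z)$. Tracing the substitution, a zero of $\xi_L$ at $s_0 = \sigma_0 + i t_0$ yields a zero of $E_L^{\omega,\nu}$ at $z_0 = -t_0 + i(\sigma_0 - \tfrac{1}{2} - \omega)$, which lies in $\C_+$ exactly when $\sigma_0 > \tfrac{1}{2} + \omega$. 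Absence of poles in $\C_+$ for each $\omega > \omega_0$ thus forces $\xi_L$ to have no zeros with $\Re(s) > \tfrac{1}{2} + \omega_0$. Since the trivial zeros of $L$ all lie in $\Re(s) \le 0 < \tfrac{1}{2} + \omega_0$ and the nontrivial zeros of $L$ coincide with the zeros of $\xi_L$, this gives (1), completing the cycle.

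The principal difficulty is the promotion from ``no zeros in $\overline{\C_+}$'' to the strict inequality defining $\mathbb{HB}$ in the step $(1) \Rightarrow (2)$. Absence of zeros alone is not enough: one must establish $|\xi_L(\tfrac{1}{2} + \omega - iz)| > |\xi_L(\tfrac{1}{2} - \omega - iz)|$ throughout $\C_+$, which rests on a Phragm{\'e}n--Lindel{\"o}f-type comparison exploiting the functional equation, precisely the content of \cite[Theorem 4]{LS} invoked in the same fashion as in Proposition \ref{prop_2_2}. Finally, the last sentence of the proposition (independence from $\nu$) is a formal observation: since $E_L^{\omega,\nu} = (E_L^{\omega,1})^\nu$, its zero set, the sign of $|E_L^{\omega,\nu}| - |(E_L^{\omega,\nu})^\sharp|$ on $\C_+$, and the inner-function property of $\Theta_L^{\omega,\nu} = (\Theta_L^{\omega,1})^\nu$ are all insensitive to the choice of positive integer $\nu$.
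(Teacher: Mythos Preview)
Your cyclic chain $(1)\Rightarrow(2)\Rightarrow(3)\Rightarrow(1)$ matches the paper's proof exactly: the paper invokes \cite[Theorem~4]{LS} for $(1)\Rightarrow(2)$, Lemma~\ref{lem_3_1} for $(2)\Rightarrow(3)$, and for $(3)\Rightarrow(1)$ it defers to an argument from \cite{Su11}, whereas you give a direct pole-tracking argument.

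There is one small gap in your $(3)\Rightarrow(1)$ step. You write that the poles of $\Theta_L^{\omega,\nu}$ are ``precisely the zeros of $E_L^{\omega,\nu}$,'' but a zero of $E_L^{\omega,\nu}$ at $z_0\in\C_+$ may be cancelled by a zero of $(E_L^{\omega,\nu})^\sharp$ there, i.e.\ it may happen that $\xi_L(s_0-2\omega)=0$ as well. So analyticity of $\Theta_L^{\omega,\nu}$ in $\C_+$ for a \emph{single} $\omega$ does not immediately exclude a zero of $\xi_L$ at $s_0$ with $\Re(s_0)>\tfrac12+\omega$. The fix is easy and already implicit in your setup: since (3) holds for \emph{every} $\omega>\omega_0$ and the zero set of $\xi_L$ is countable, for any putative zero $s_0$ with $\Re(s_0)>\tfrac12+\omega_0$ you can choose $\omega\in(\omega_0,\Re(s_0)-\tfrac12)$ so that $s_0-2\omega$ is not a zero of $\xi_L$; then $\Theta_L^{\omega,\nu}$ has a genuine pole at $z_0\in\C_+$, contradicting (3). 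With this adjustment your argument is complete and in line with the paper.
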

\begin{proof}
Assume $0 \leq \omega_0 \leq 1/2$, since we have nothing to say for $\omega > 1/2$ 
by Proposition \ref{prop_2_2} and \cite[Lemma 2.1]{Su19_1}. 
We find that (1) implies that $E_L^{\omega,\nu}$ satisfies \eqref{s103} for every $\omega>\omega_0$ 
in a way similar to the proof of Proposition \ref{prop_2_2}. 
If $E_L^{\omega,\nu}(z)=\xi_L^{\omega}(\tfrac{1}{2}+\omega-iz)^\nu$ has a real zero for some $\omega>\omega_0$, 
it implies that $L(s)$ has a zero in $\Re(s)>\frac{1}{2}+\omega_0$, since
\begin{equation*}
\xi_L(\tfrac{1}{2}+\omega-iz)=\xi_L(\tfrac{1}{2}+\omega_0-i(z+i(\omega-\omega_0))). 
\end{equation*}
Thus $E_L^{\omega,\nu} \in \mathbb{HB}$ and we obtain (1)$\Rightarrow $(2). 
The implication (2)$\Rightarrow$(3) is a consequence of \cite[Lemma 2.1]{Su19_1}. 
The implication (3)$\Rightarrow$(1) is proved in a way similar to the proof of Theorem 2.3 (1) in \cite{Su11}.
\end{proof}

The value $\omega=1/2$ corresponds to the abscissa $\sigma=1$ 
of the absolute convergence of the Dirichlet series \eqref{s201}.  
The non-vanishing of $L \in \mathcal{S}$ on the line $\sigma = 1$ is an important problem 
because it relates to an analogue of the prime number theorem of $L \in \mathcal{S}$ for example. 
Conrey--Ghosh~\cite{CoGh93} proved the non-vanishing of $L \in \mathcal{S}$ on the line $\sigma = 1$ 
subject to the truth of the Selberg orthogonality conjecture. 
Kaczorowski--Perelli~\cite{KP03} obtained the non-vanishing of $L \in \mathcal{S}$ on the line $\sigma = 1$ 
under a weak form of the Selberg orthogonality conjecture. 
As mentioned before, it is conjectured that $\mathcal{S}$ consists only of automorphic $L$-functions. 
The non-vanishing for automorphic $L$-functions on the line $\sigma=1$ 
had been proved unconditionally in Jacquet--Shalika~\cite{JaSh76}.

%
%
\section{Proof of Theorem \ref{thm_3}} \label{section_6}
%
%

In this section, we prove that $E_L^{\omega,\nu}$ of \eqref{s204} satisfies (K5) for $\tau=\infty$ 
if $E_L^{\omega,\nu} \in \overline{\mathbb{HB}}$ (Proposition \ref{prop_4_4}). 
Then we obtain Theorem \ref{thm_3} as a consequence of \cite[Theorem 1.2]{Su19_1}. 

%
%
\subsection{Non-vanishing of Fredholm determinants: Conditional cases} 
%
%

We suppose that $E_L^{\omega,\nu} \in \overline{\mathbb{HB}}$ 
throughout this subsection, otherwise it will be mentioned.  
Then, $\Theta_L^{\omega,\nu}$ is inner in $\C_+$ by \cite[Lemma 2.1]{Su19_1}.  
This assumption is satisfied unconditionally for $\omega>1/2$,   
and also for all $\omega>0$ under ${\rm GRH}(L)$ by Proposition \ref{prop_2_2} 
(see also Proposition \ref{prop_4_3}). 
We denote by ${\mathsf K}_L^{\omega,\nu}$ 
the isometry on $L^2(\R)$ defined by \eqref{s302} 
(cf.  \cite[Lemma 2.2]{Su19_1}. 
Note that it is not obvious whether the integral defines an operator on $L^2(\R)$ 
if we do not assume that $E_L^{\omega,\nu} \in \overline{\mathbb{HB}}$.)
%
If $(\omega,\nu)$ satisfies \eqref{s208}, 
we have 
\begin{equation*}
\mathsf{K}_L^{\omega,\nu}[t]  = {\mathsf P}_t{\mathsf K}_L^{\omega,\nu}{\mathsf P}_t
\end{equation*}
for $\mathsf{K}_L^{\omega,\nu}[t]$ in \eqref{s209} 
and the orthogonal projection ${\mathsf P}_t$ from $L^2(\R)$ to $L^2(-\infty,t)$. 

\begin{lemma} \label{lem_4_3} 
Let $L \in \mathcal{S}_\R$ and $\omega>0$. 
Then there exist entire functions $f_1^\omega(s)$ and $f_2^\omega(s)$ 
such that they have no common zeros, satisfy
\begin{equation*}
\frac{\xi_L(s-\omega)}{\xi_L(s+\omega)} = \frac{f_2^\omega(s)}{f_1^\omega(s)},
\end{equation*}
and the number of zeros of $f_2^\omega(s)$ in $|\Im(s)| \leq T$ 
is approximated by $c \, T \log T$ for large $T>0$, 
where $c>0$ is some constant. 
\end{lemma}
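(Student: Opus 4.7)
The plan is to factor $\xi_L(s-\omega)/\xi_L(s+\omega)$ as a ratio of coprime entire functions by dividing out the greatest common divisor, and then count the zeros of the numerator via the Riemann--von Mangoldt asymptotic for the Selberg class.

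Let $Z$ denote the multiset of zeros of $\xi_L$ (counted with multiplicity). The zero multisets of $\xi_L(s\mp\omega)$ are the horizontal translates $Z\pm\omega$, and their intersection $C:=(Z+\omega)\cap(Z-\omega)$, taken with pointwise minimum of multiplicities, is a discrete multiset whose counting function is bounded above by that of $Z$, hence of order at most $1$. By the Weierstrass factorization theorem there is an entire function $g$ of order at most $1$ with zero multiset exactly $C$. Setting
\[
f_1^\omega(s) := \xi_L(s+\omega)/g(s), \qquad f_2^\omega(s) := \xi_L(s-\omega)/g(s),
\]
produces entire functions with no common zeros whose ratio is $\xi_L(s-\omega)/\xi_L(s+\omega)$, as required.

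For the zero count, write $N_F(T)$ for the number of zeros of an entire function $F$ in $|\Im(s)|\leq T$, counted with multiplicity. The Riemann--von Mangoldt formula for $L\in\mathcal{S}$ gives $N_{\xi_L}(T)=\frac{d_L}{\pi}\,T\log T+O(T)$, and horizontal shifts preserve $\Im(s)$, so
\[
N_{f_2^\omega}(T) \;=\; N_{\xi_L(s-\omega)}(T) - N_g(T) \;=\; \frac{d_L}{\pi}\,T\log T - N_g(T) + O(T).
\]
The upper bound $N_{f_2^\omega}(T)\leq(d_L/\pi)\,T\log T+O(T)$ is thus immediate, and the lemma reduces to establishing $N_g(T)\leq(1-\eta)\frac{d_L}{\pi}\,T\log T+O(T)$ for some $\eta\in(0,1]$, after which one may take $c=\eta\,d_L/\pi>0$.

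Each element of $C$ corresponds to a pair $(\rho_1,\rho_2)$ of $\xi_L$-zeros with $\rho_2-\rho_1=2\omega$. For $\omega\geq 1/2$ no such pair of nontrivial zeros can fit inside the critical strip $0\leq\Re\rho\leq 1$, whose width is at most $2\omega$, so only the trivial zeros of $\xi_L$ contribute; these lie on finitely many horizontal lines coming from the gamma factor $\gamma_L$, yielding $N_g(T)=O(T)$ at once. The main obstacle is the range $0<\omega<1/2$, where resonant pairs of nontrivial zeros at horizontal distance exactly $2\omega$ cannot be excluded by a region argument. In this regime I plan to combine the functional equation $\xi_L(s)=\epsilon_L\xi_L(1-s)$, which symmetrizes $C$ under $s\mapsto 1-s$ and forces $f_1^\omega$ and $f_2^\omega$ to have matching zero counts, with the polynomial decay $|\Theta_L^{\omega,1}(u+iv)|\ll_v |u|^{-\omega d_L}$ from Lemma \ref{lem_4_2}: the decay rules out the degenerate scenario in which $N_g(T)$ absorbs the full leading term of $N_{\xi_L}(T)$, since otherwise $f_2^\omega/f_1^\omega$ would have sufficiently balanced numerator and denominator growth to preclude the genuine polynomial decay on vertical lines asserted by Lemma \ref{lem_4_2}.
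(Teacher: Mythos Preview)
Your construction of $f_1^\omega,f_2^\omega$ by dividing out the greatest common divisor is exactly the paper's approach, and the reduction to ruling out $N_g(T)\sim N_{\xi_L}(T)$ is correct. Two remarks on the easy case: $\xi_L$ has \emph{no} trivial zeros (by its very definition \eqref{s202} all zeros lie in the critical strip), so for $\omega>1/2$ the set $C$ is simply empty and $N_g(T)=0$; your conclusion there is right but for a simpler reason than you state.

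The genuine gap is in the range $0<\omega\leq 1/2$, where you do not give a proof but only a plan, and the plan cannot succeed as written. The decay $|\Theta_L^{\omega,1}(u+iv)|\ll_v|u|^{-\omega d_L}$ of Lemma~\ref{lem_4_2} is a property of the fixed ratio $\xi_L(s-\omega)/\xi_L(s+\omega)$; it holds regardless of how one factors that ratio as $f_2^\omega/f_1^\omega$, and in particular it carries no information about how many zeros were cancelled in forming the coprime representation. Likewise, the observation that $N_{f_1^\omega}(T)=N_{f_2^\omega}(T)$ by the functional equation is correct but does not conflict with polynomial decay on vertical lines: entire functions with identical zero counts can have ratios with arbitrary growth or decay (e.g.\ $e^s$ and $1$). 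So neither ingredient, nor their combination, forces $N_g(T)$ to be small.

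The paper's argument for this range is purely combinatorial and does not use Lemma~\ref{lem_4_2} at all. One assumes for contradiction that almost every zero $\rho$ of $\xi_L$ satisfies $\rho+2\omega\in\mathcal{Z}_L$, and then uses the functional equation $\xi_L(s)=\epsilon_L\xi_L(1-s)$ to show that almost every such $\rho$ also satisfies $\rho-2\omega\in\mathcal{Z}_L$. Iterating, one produces arbitrarily long chains $\rho,\rho-2\omega,\rho-4\omega,\dots$ of zeros of $\xi_L$, which eventually leave the critical strip --- a contradiction. This shift-and-iterate mechanism is the missing idea.
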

\begin{proof} 
We denote by $\mathcal{Z}_L$ the set of all zeros of $\xi_L(s)$ 
and by $m(\rho)$ the multiplicity of a zero $\rho \in \mathcal{Z}_L$. 
Then any zero of $\xi_L(s-\omega)$ has the form $s=\rho+\omega$ for some $\rho \in \mathcal{Z}_L$ 
and has the multiplicity $m(\rho)$. 
On the other hand, if $s=\rho+\omega$ for some $\rho \in \mathcal{Z}_L$ and $\xi_L(s+\omega)=0$, 
we have $\rho+2\omega \in \mathcal{Z}_L$. 
Considering this, we set
\begin{equation*}
\mathcal{Z}_L^\omega = \{ \rho \in \mathcal{Z}_L\,|\, \rho+2\omega \in \mathcal{Z}_L \},
\end{equation*}
and define an entire function by the Weierstrass product:
\begin{equation*}
\aligned
f_0^\omega(s) 
&= 
\prod_{\substack{\rho \in \mathcal{Z}_L^\omega \\ m(\rho) \geq m(\rho+2\omega)}}\left(1 - \frac{s}{\rho+\omega} \right)^{m(\rho+2\omega)} 
\exp\left( \frac{m(\rho+2\omega)\,s}{\rho-\omega} \right) \\
& \quad \times \prod_{\substack{\rho \in \mathcal{Z}_L^\omega \\ m(\rho) < m(\rho+2\omega)}}\left(1 - \frac{s}{\rho+\omega} \right)^{m(\rho)} 
\exp\left( \frac{m(\rho)\,s}{\rho-\omega} \right), 
\endaligned
\end{equation*}
where the right-hand side converges uniformly on compact subsets in $\C$, 
since $\xi_L(s)$ is an entire function of order one. 
In addition, we put
\begin{equation*}
f_1^\omega(s) = \frac{\xi_L(s+\omega)}{f_0^\omega(s)}, \quad 
f_2^\omega(s) = \frac{\xi_L(s-\omega)}{f_0^\omega(s)}.
\end{equation*}
Then, by definition, $f_1^\omega(s)$ and $f_2^\omega(s)$ are entire functions such that 
they have no common zeros and satisfy
\begin{equation*}
\frac{\xi_L(s-\omega)}{\xi_L(s+\omega)}=\frac{f_2^\omega(s)}{f_1^\omega(s)}.
\end{equation*}
Therefore, the remaining task is to show that $f_2^\omega(s)$ has approximately 
$c\,T \log T$ many zeros in $|\Im(s)| \leq T$ for some $c>0$. 

We denote by $N_L(T)$ (resp. $N_L^\omega(T)$) the number of zeros in   
$\mathcal{Z}_L$ (resp. $\mathcal{Z}_L^\omega$) with $|\Im(s)| \leq T$ counting with multiplicity: 
\begin{equation*}
N_L(T) = \sum_{\rho \in \mathcal{Z}_L,\, |\Im(s)| \leq T} m(\rho), \quad 
N_L^\omega(T) = \sum_{\rho \in \mathcal{Z}_L^\omega,\, |\Im(s)| \leq T} m(\rho)
\end{equation*}
and define
\begin{equation*}
n_L^\omega(T) = \sum_{\substack{\rho \in \mathcal{Z}_L,\, |\Im(s)| \leq T \\ m(\rho) \geq m(\rho+2\omega)}} m(\rho+2\omega) 
+ \sum_{\substack{\rho \in \mathcal{Z}_L, \, |\Im(s)| \leq T \\ m(\rho) < m(\rho+2\omega)}} m(\rho),
\end{equation*}
where the first sum is zero if it is an empty sum. 
Then, $n_L^\omega(T) \leq N_L^\omega(T) \leq N_L(T)$ 
and the number of zeros of $f_2^\omega(s)$ in $|\Im(s)| \leq T$ is $N_L(T) - n_L^\omega(T)$. 
We recall that $N_L(T) \sim (d_L/\pi)\,T \log T$, and it is so for the number of zeros of $\xi_L(s-\omega)$ with $|\Im(s)| \leq T$, 
where $f(T) \sim g(T)$ means that $f(T)/g(T) \to 1$ as $T \to \infty$. 
Therefore, $N_L(T) - n_L^\omega(T) \sim c \,T \log T$ for some $c>0$ unless $N_L(T) \sim n_L^\omega(T)$. 

Now we prove that $N_L(T) \not\sim n_L^\omega(T)$ by contradiction. 
Suppose that $N_L(T) \sim n_L^\omega(T)$. 
Then, $N_L(T) \sim N_L^\omega(T)$, since 
$n_L^\omega(T) \leq N_L^\omega(T) \leq N_L(T)$. 
We put 
$\Sigma_L^{\omega} = \{ \rho \in \mathcal{Z}_L^\omega\,|\, 1-\rho \in \mathcal{Z}_L^\omega\}$ 
and 
\begin{equation*}
M_L^{\omega}(T) = \sum_{\substack{\rho \in \Sigma_L^{\omega},\,|\Im(\rho)| \leq T}} m(\rho).
\end{equation*}
Then, $M_L^\omega(T) \sim N_L(T)$, 
because $N_L(T) \sim N_L^\omega(T)$ and functional equations \eqref{s203} imply that 
$\mathcal{Z}_L^\omega$ is closed under $\rho \mapsto 1-\rho$ and $\rho \mapsto \bar{\rho}$ 
except for a relatively small subset counting with multiplicity. 
If we take a zero $\rho \in \Sigma_L^{\omega}$, 
then $1-\rho \in \mathcal{Z}_L^\omega$ by the definition of $\Sigma_L^{\omega}$, 
and thus $1-\rho+2\omega \in \mathcal{Z}_L$ by the definition of $\mathcal{Z}_L^\omega$. 
Therefore, $\rho-2\omega=1-(1-\rho+2\omega) \in \mathcal{Z}_L$ by the first functional equation of \eqref{s203}. 
As a consequence, $\rho \in \Sigma_L^{\omega}$ implies $\rho-2\omega \in \mathcal{Z}_L^\omega$. 
On the other hand, $M_L^\omega(T) \sim N_L^\omega(T) \sim N_L(T)$ shows that 
$\rho-2\omega \in \mathcal{Z}_L^\omega$ implies $\rho-2\omega \in \Sigma_L^{\omega}$ almost surely. 
Taken together, $\rho \in \Sigma_L^{\omega}$ implies $\rho-2\omega \in \Sigma_L^{\omega}$ almost surely 
and this process is continued repeatedly. 
However, it is impossible, because all zeros of $\xi_L(s)$ must lie in the critical strip. 
Hence, $N_L(T) \not\sim n_L^\omega(T)$. 
\end{proof}

\begin{lemma} \label{lem_4_4} 
Let $t \geq 0$. Suppose that $(\omega, \nu)$ satisfies \eqref{s208}. 
Then the support of ${\mathsf K}_L^{\omega,\nu}{\mathsf P}_{t}f$ is not compact 
for every $f \in L^2(\R)$ unless ${\mathsf K}_L^{\omega,\nu}{\mathsf P}_{t}f =0$. 
\end{lemma}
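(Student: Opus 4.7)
The plan is to argue by contradiction: suppose that $g := \mathsf{K}_L^{\omega,\nu}\mathsf{P}_t f$ has compact support and is not identically zero, and derive a contradiction by comparing the zero-density of $\mathsf{F}g$ forced by Lemma \ref{lem_4_3} against the standard growth bound for entire functions of exponential type. By (K3) verified for $K_L^{\omega,\nu}$ in Proposition \ref{prop_4_1}, $g$ vanishes on $(-\infty,-t)$; combined with the compact-support assumption, $\mathrm{supp}(g)\subseteq[-t,R]$ for some $R>0$, and the Paley--Wiener theorem makes $\mathsf{F}g$ an entire function of exponential type at most $\max(t,R)$.

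On the other hand, since $\mathsf{P}_t f\in L^2(\R)$ has support in $(-\infty,t]$, the function $h(z):=\mathsf{F}(\mathsf{P}_t f)(-z)$ is holomorphic on $\C_+$, and Lemma \ref{lem_3_2} provides the identity
\begin{equation*}
\mathsf{F}g(z)=\Theta_L^{\omega,\nu}(z)\,h(z)\qquad (z\in\C_+).
\end{equation*}
If $h\equiv 0$, then $\mathsf{P}_t f=0$ and so $g=0$, which is already the desired conclusion; we may therefore assume $h\not\equiv 0$ on $\C_+$. Consequently, at every zero $z_0$ of $\Theta_L^{\omega,\nu}$ in $\C_+$, the entire function $\mathsf{F}g$ vanishes to multiplicity at least the order of $\Theta_L^{\omega,\nu}$ at $z_0$.

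The crucial step is to exhibit $\gg T\log T$ such zeros inside $|z|\leq T$ via Lemma \ref{lem_4_3}. With $s=\tfrac{1}{2}-iz$ and $\Theta_L^{\omega,\nu}(z)=\epsilon_L^{\nu}(f_2^\omega(s)/f_1^\omega(s))^\nu$ for the coprime entire functions $f_1^\omega,f_2^\omega$ from Lemma \ref{lem_4_3}, the zeros of $\Theta_L^{\omega,\nu}$ in $\C_+$ correspond exactly to zeros of $f_2^\omega$ with $\Re(s)>1/2$, counted with multiplicity $\nu\cdot m_{f_2^\omega}(s)$. The hypothesis $E_L^{\omega,\nu}\in\overline{\mathbb{HB}}$ confines all zeros $\rho$ of $\xi_L$ to the strip $|\Re(\rho)-1/2|\leq\omega$, whence the zeros of $\xi_L(s-\omega)$ lie in $1/2\leq\Re(s)\leq 1/2+2\omega$. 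I then claim that $f_2^\omega$ has no zeros on the boundary line $\Re(s)=1/2$: a putative such zero would arise from some $\rho=1/2-\omega+it\in\mathcal{Z}_L$, in which case the functional equations \eqref{s203} together with $\xi_L^\sharp=\xi_L$ force $\rho+2\omega=1/2+\omega+it$ to be a zero of $\xi_L$ of the same multiplicity $m(\rho)$, placing $\rho\in\mathcal{Z}_L^\omega$ and making the factor $f_0^\omega$ from Lemma \ref{lem_4_3} cancel the zero of $\xi_L(s-\omega)$ at $s=1/2+it$ completely. Thus all zeros of $f_2^\omega$ lie strictly in $\Re(s)>1/2$.

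Combining the pieces, Lemma \ref{lem_4_3} supplies $\sim cT\log T$ zeros of $f_2^\omega$ with $|\Im(s)|\leq T$, all of which transport under $z=i(s-1/2)$ into the disc $|z|\leq T+2\omega$. Hence $\mathsf{F}g$ has at least $\nu cT\log T$ zeros counted with multiplicity in this disc. On the other hand, Jensen's formula applied to any entire function of finite exponential type yields a zero-counting bound $n(R)=O(R)$. Letting $T\to\infty$ contradicts the previous lower bound unless $\mathsf{F}g\equiv 0$, in which case $g=0$. The main obstacle is verifying that no zeros of $f_2^\omega$ sit on the critical line $\Re(s)=1/2$ under the sole assumption $E_L^{\omega,\nu}\in\overline{\mathbb{HB}}$, since this requires combining the enlarged strip with the precise reflective symmetry of $\mathcal{Z}_L$ and the construction of $f_0^\omega$; the remainder of the argument is a clean density-versus-growth comparison.
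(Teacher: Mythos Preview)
Your proof is correct and follows the same overall architecture as the paper's (contradiction via Paley--Wiener, the zero count from Lemma~\ref{lem_4_3}, and the Jensen bound). The one point of divergence is how you force the zeros of $f_2^\omega$ to be zeros of $\mathsf{F}g$. The paper argues globally: setting $G(z)=\mathsf{F}\mathsf{P}_t f(-z)/f_1^\omega(\tfrac12-iz)^\nu$, it uses the coprimality of $f_1^\omega$ and $f_2^\omega$ to conclude that $G$ is entire, so that $\mathsf{F}g=\epsilon_L^\nu\, f_2^{\omega}(\tfrac12-iz)^\nu\, G(z)$ inherits every zero of $f_2^\omega$ regardless of its location. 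You instead keep the identity on $\C_+$ and show---using the standing hypothesis $E_L^{\omega,\nu}\in\overline{\mathbb{HB}}$ and the reflective symmetries of $\mathcal{Z}_L$---that all zeros of $f_2^\omega$ already lie in $\Re(s)>1/2$, including your boundary elimination on $\Re(s)=1/2$ via the exact cancellation in $f_0^\omega$. Your route makes the dependence on $\overline{\mathbb{HB}}$ explicit, while the paper remarks at the end of its proof that this standing assumption is in fact not needed for Lemma~\ref{lem_4_4}; the global factorization is what buys that extra generality.
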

\begin{proof} We prove this by contradiction. 
Suppose that ${\mathsf K}_L^{\omega,\nu}{\mathsf P}_{t} f \not=0$ and has a compact support. 
Then ${\mathsf F}{\mathsf K}_L^{\omega,\nu}{\mathsf P}_{t} f$ is an entire function of exponential type by the Paley-Wiener theorem.  
On the other hand, we have
\begin{equation*}
{\mathsf F}{\mathsf K}_L^{\omega,\nu}{\mathsf P}_{t} f(z) 
= \Theta_L^{\omega,\nu}(z) \cdot {\mathsf F}{\mathsf P}_{t} f(-z)
\end{equation*}
by  \cite[Lemma 2.2]{Su19_1}. 
If we put $G(z)={\mathsf F}{\mathsf P}_{t} f(-z)/f_1^\omega\left(\tfrac{1}{2}-iz\right)^\nu$, 
then we have  
\begin{equation*}
{\mathsf F}{\mathsf K}_L^{\omega,\nu}{\mathsf P}_{t} f(z)  = f_2^\omega\left(\tfrac{1}{2}-iz\right)^\nu \cdot G(z), 
\end{equation*}
where $f_1^\omega$ and $f_2^\omega$ are functions in Lemma \ref{lem_4_3}.  
Here $G(z)$ is entire, because, by Lemma \ref{lem_4_3}, the zeros of the numerator $f_2^\omega\left(\tfrac{1}{2}-iz\right)^\nu$ 
of $\Theta_L^{\omega,\nu}(z)$
can not kill the zeros of the denominator $f_1^\omega\left(\tfrac{1}{2}-iz\right)^\nu$, 
which therefore must be killed by zeros of ${\mathsf F}{\mathsf P}_{a} f(-z)$. 
This allows $f_2^\omega\left(\tfrac{1}{2}-iz\right)^\nu$ to be factored out.

The entire function on the right-hand side has at least $c \,T \log T$ many zeros 
in the disk of radius $T$ around the origin as $T \to \infty$ for some $c>0$ by Lemma \ref{lem_4_3}. 
However all entire functions of exponential type have at most $O(T)$ zeros in the disk of radius $T$ around the origin, as $T \to \infty$, 
because of the Jensen formula (\cite[\S2.5 (15)]{Le96}). This is a contradiction. 

As the above, it is not necessary to assume that $\Theta$ is inner in $\C_+$ 
for Lemma \ref{lem_4_4}. 
\end{proof}

\begin{proposition} \label{prop_4_4}
Let $t \geq 0$. We have 
${\rm i)}$ ${\mathsf K}_L^{\omega,\nu}[t] f =0$ for every $f \in L^2(-\infty,-t)$, 
${\rm ii)}$ $\Vert {\mathsf K}_L^{\omega,\nu}[t] f \Vert \not= \Vert f \Vert$ 
for every $0 \not=f \in L^2(-\infty,t)$, 
and ${\rm iii)}$ $\Vert {\mathsf K}_L^{\omega, \nu}[t] \Vert < 1$. 
In particular, $1 \pm {\mathsf K}_L^{\omega,\nu}[t]$ are invertible operators on $L^2(-\infty,t)$ 
for every $t \geq 0$. 
\end{proposition}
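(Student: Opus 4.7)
The plan is to exploit three ingredients in sequence: the support of the kernel $K_L^{\omega,\nu}$ (Proposition \ref{prop_4_1}), the fact that $\mathsf{K}_L^{\omega,\nu}$ extends to an isometry on $L^2(\R)$ (Lemma \ref{lem_3_2}, available because the standing hypothesis in this subsection is $E_L^{\omega,\nu}\in\overline{\mathbb{HB}}$, equivalently $\Theta_L^{\omega,\nu}$ is inner), and the rigidity of supports of $\mathsf{K}_L^{\omega,\nu}\mathsf{P}_t f$ given by Lemma \ref{lem_4_4}.

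For (i), I would simply compute supports. If $f\in L^2(-\infty,-t)$, then $\mathsf{P}_t f=f$, and for $(\mathsf{K}_L^{\omega,\nu}f)(x)=\int_{-\infty}^{-t} K_L^{\omega,\nu}(x+y)f(y)\,dy$ to be nonzero one needs $x+y\geq 0$ with $y\leq -t$, i.e.\ $x\geq t$. Hence $\mathsf{K}_L^{\omega,\nu}f$ is supported in $[t,\infty)$, and applying $\mathsf{P}_t$ annihilates it in $L^2$. The same support calculation, applied to a general $f\in L^2(-\infty,t)$, shows that $\mathsf{K}_L^{\omega,\nu}f$ is supported in $[-t,\infty)$; this observation is the crucial bookkeeping I will reuse in (ii).

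For (ii), I would argue by contradiction. Suppose $0\neq f\in L^2(-\infty,t)$ satisfies $\Vert\mathsf{K}_L^{\omega,\nu}[t]f\Vert=\Vert f\Vert$. Since $\mathsf{P}_t f=f$ and $\mathsf{K}_L^{\omega,\nu}$ is an isometry on $L^2(\R)$ by Lemma \ref{lem_3_2}, $\Vert\mathsf{K}_L^{\omega,\nu}f\Vert=\Vert f\Vert$, so the projection $\mathsf{P}_t$ must preserve the norm of $\mathsf{K}_L^{\omega,\nu}f$; equivalently $\mathsf{K}_L^{\omega,\nu}f$ is supported in $(-\infty,t]$. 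Combined with the support bound $[-t,\infty)$ from the end of the previous paragraph, $\mathsf{K}_L^{\omega,\nu}f$ is supported in the compact set $[-t,t]$. By Lemma \ref{lem_4_4} (which uses \eqref{s208} and the inner-function property, both in force), this forces $\mathsf{K}_L^{\omega,\nu}f=\mathsf{K}_L^{\omega,\nu}\mathsf{P}_t f=0$, and then isometry gives $f=0$, a contradiction.

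For (iii) and the final assertion, I would combine (ii) with the spectral theorem. The operator $\mathsf{K}_L^{\omega,\nu}[t]$ is self-adjoint and Hilbert–Schmidt (Theorem \ref{thm_1}), hence compact with purely real eigenvalues accumulating only at $0$; its operator norm equals $\sup|\lambda_j|$, and this supremum is attained at some eigenvalue $\lambda^{\star}$. If $f^{\star}$ is a corresponding eigenvector, (ii) yields $|\lambda^{\star}|\Vert f^{\star}\Vert=\Vert\mathsf{K}_L^{\omega,\nu}[t]f^{\star}\Vert<\Vert f^{\star}\Vert$, so $\Vert\mathsf{K}_L^{\omega,\nu}[t]\Vert=|\lambda^{\star}|<1$. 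A Neumann series then gives invertibility of $1\pm\mathsf{K}_L^{\omega,\nu}[t]$ for every $t\geq 0$. The main obstacle, as often in such arguments, is the compact-support step in (ii): making the chain ``isometry $+$ norm-preserving projection $\Rightarrow$ two-sided support bound $\Rightarrow$ compact support $\Rightarrow$ contradiction via Lemma \ref{lem_4_4}'' work cleanly requires checking that part (i) indeed supplies the $[-t,\infty)$ bound for arbitrary $f\in L^2(-\infty,t)$, not merely for $f\in L^2(-\infty,-t)$; this is what the support of $K_L^{\omega,\nu}$ in $[0,\infty)$ (Proposition \ref{prop_4_1}(1)) delivers.
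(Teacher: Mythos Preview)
Your proposal is correct and follows essentially the same argument as the paper: support of $K_L^{\omega,\nu}$ for (i), the chain ``isometry $+$ norm-preserving projection $\Rightarrow$ compact support $\Rightarrow$ Lemma~\ref{lem_4_4}'' for (ii), and the spectral theorem for compact self-adjoint operators for (iii). The organization and emphasis match the paper's proof closely.
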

\begin{proof} 
First, we note that ${\mathsf K}_L^{\omega,\nu} f$ is defined for every $f \in L^2(-\infty,t)$ 
by \eqref{s302} and \cite[Lemma 2.2]{Su19_1}). 
Because $\int_{-\infty}^{t} K_L^{\omega,\nu}(x+y)f(y)\,dy =0$ for $x<-t$ by (K3), 
we obtain i).   

To prove ii), 
it is sufficient to show $\Vert {\mathsf K}_L^{\omega,\nu}[t] f \Vert < \Vert f \Vert$ unless $f=0$, 
because  
$\Vert {\mathsf K}_L^{\omega,\nu}[t] \Vert \leq \Vert {\mathsf P}_t \Vert \cdot \Vert {\mathsf K}_L^{\omega,\nu} \Vert \cdot \Vert {\mathsf P}_t \Vert = 1$ 
by  \cite[Lemma 2.2]{Su19_1}, and 
$\Vert {\mathsf K}_L^{\omega,\nu}[t] f \Vert \leq \Vert {\mathsf K}_L^{\omega,\nu}[t] \Vert \cdot \Vert f \Vert \leq \Vert f \Vert$. 
Here $\Vert {\mathsf K}_L^{\omega,\nu}[t] f \Vert \not= \Vert f \Vert$ 
is equivalent to 
$\Vert {\mathsf P}_t {\mathsf K}_L^{\omega,\nu} f \Vert \not= \Vert f \Vert$, 
since ${\mathsf P}_t f = f$ for $f \in L^2(-\infty,t)$. 
Suppose that $\Vert {\mathsf P}_t {\mathsf K}_L^{\omega,\nu} f \Vert = \Vert f \Vert$ for some $0 \not= f \in L^2(-\infty,t)$.  
Then it implies $\Vert {\mathsf P}_t {\mathsf K}_L^{\omega,\nu} f \Vert= \Vert {\mathsf K}_L^{\omega,\nu} f\Vert$ 
by $\Vert {\mathsf K}_L^{\omega,\nu} f \Vert = \Vert f \Vert$. 
Therefore
\begin{equation*}
\int_{-\infty}^{t} |{\mathsf K}_L^{\omega,\nu} f(x)|^2 \, dx 
= \int_{-\infty}^{\infty} |{\mathsf K}_L^{\omega,\nu} f(x)|^2 \, dx.
\end{equation*}
Thus ${\mathsf K}_L^{\omega,\nu} f(x) = 0$ for almost every $x>t$. 
On the other hand, we have 
\begin{equation*}
{\mathsf K}_L^{\omega,\nu} f(x) = \int_{-\infty}^{t} K_L^{\omega,\nu}(x+y)f(y)\,dy 
= \int_{-x}^{t} K_L^{\omega,\nu}(x+y)f(y)\,dy =0 
\end{equation*}
for $x<-t$ by $f\in L^2(-\infty,t)$. 
Hence ${\mathsf K}_L^{\omega,\nu} f$ has compact support contained in $[-t,t]$. 
However, it is impossible for any $f\not=0$ by Lemma \ref{lem_4_4}. 
As the consequence $\Vert {\mathsf K}_L^{\omega,\nu}[t] f \Vert < \Vert f \Vert$ for every $0\not=f \in L^2(-\infty,t)$. 

Finally, we prove iii). As stated in Theorem \ref{thm_1}, 
${\mathsf K}_L^{\omega,\nu}[t]$ is a self-adjoint compact operator 
(because the Hilbert-Schmidt operator is compact). 
Therefore, ${\mathsf K}_L^{\omega,\nu}[t]$ has purely discrete spectrum which has no accumulation points except for $0$, 
and one of $\pm \Vert {\mathsf K}_L^{\omega,\nu}[t] \Vert$ is an eigenvalue of ${\mathsf K}_L^{\omega,\nu}[t]$. 
However, by ii), every eigenvalue of ${\mathsf K}_L^{\omega,\nu}[t]$ has an absolute value less than $1$. 
Hence $\Vert {\mathsf K}_L^{\omega,\nu}[t] \Vert <1$. 
\end{proof}

\section{Proof of Theorem \ref{thm_4}} \label{section_7} 
%
%
\subsection{Necessity} 
%

If we take $\nu>1/(\omega d_L)$ for each $\omega>0$, 
$E_L^{\omega,\nu}$ satisfies (K1)$\sim$(K4) by Proposition \ref{prop_4_1}. 
In addition, $\Theta_L^{\omega,\nu}$ is inner in $\C_+$ for every $(\omega,\nu) \in \R_{>0} \times \Z_{>0}$ 
under ${\rm GRH}(L)$ 
by Proposition \ref{prop_4_3}. Therefore, we obtain (K5) with $\tau=\infty$ by Proposition \ref{prop_4_4}. 
Thus (1), (2) and (3) of Theorem \ref{thm_4} hold. Finally, (4) follows from Theorem \ref{thm_3}.
\hfill $\Box$

%
\subsection{Sufficiency}
%

By (1), (2) and (3), we obtain Hamiltonians $H_L^{\omega_n,\nu_n}$ on $[0,\infty)$ 
and the family of solutions ${}^{\rm t}[A_L^{\omega_n,\nu_n}(t,z),B_L^{\omega_n,\nu_n}(t,z)]$, 
$t \geq 0$,  of the canonical system on $[0,\infty)$ 
associated with $H_L^{\omega_n,\nu_n}$ satisfying  
the initial condition 
\begin{equation*}
A_L^{\omega_n,\nu_n}(0,z)=A_L^{\omega_n,\nu_n}(z) 
\quad \text{and} \quad 
B_L^{\omega_n,\nu_n}(0,z)=B_L^{\omega_n,\nu_n}(z). 
\end{equation*}
By \cite[Proposition 2.4]{Su19_1}, (4) implies that 
$E_L^{\omega_n,\nu_n}(t,z)=A_L^{\omega_n,\nu_n}(t,z)-iB_L^{\omega_n,\nu_n}(t,z)$ 
belongs to $\overline{\mathbb{HB}}$ for every $t \geq 0$. 
In particular, 
\begin{equation*}
E_L^{\omega_n,\nu_n}(0,z) = A_L^{\omega_n,\nu_n}(z)-iB_L^{\omega_n,\nu_n}(z) = \xi_L(\tfrac{1}{2}+\omega_n-iz)^{\nu_n}
\end{equation*}
belongs to $\overline{\mathbb{HB}}$. 
That is, $|\xi_L(\tfrac{1}{2}-\omega_n-iz)^{\nu_n}/\xi_L(\tfrac{1}{2}+\omega_n-iz)^{\nu_n}|<1$ if $\Im(z)>0$. 
It implies that $E_L^{\omega_n,1}(z)=\xi_L(\tfrac{1}{2}+\omega_n-iz)$ belongs to $\overline{\mathbb{HB}}$. 
In particular, $\xi_L(\tfrac{1}{2}+\omega_n-iz)$ has no zeros in $\C_+$ for every $n$. 
This implies that $\xi_L(\tfrac{1}{2}-iz)$ has no zeros in $\C_+$. 
In fact, if $\xi_L(\tfrac{1}{2}-iz)$ has a zero $\gamma=u+iv \in \C_+$,  
there exists $n$ such that $\omega_n <v$ and $E_L^{\omega_n,1}$ has a zero in $\C_+$, 
since $\omega_n \to 0$ and 
\begin{equation*}
\xi_L(\tfrac{1}{2}-i(u+iv))=\xi_L(\tfrac{1}{2}+\omega_n-i(u+i(v-\omega_n))). 
\end{equation*}
This contradicts $E_L^{\omega_n,1} \in \overline{\mathbb{HB}}$ for every $n$. 
The functional equation implies $\xi_L(\tfrac{1}{2}-iz)$ has no zeros in $\C_-$. 
Hence, all zeros of $\xi_L(\tfrac{1}{2}-iz)$ are real. \hfill $\Box$ 

%
%
\subsection{A variant of Theorem \ref{thm_4}} \label{section_9_1}
%
%

By Proposition \ref{prop_4_3} and the above argument, 
we obtain the following variant of Theorem \ref{thm_4}. 

\begin{theorem} \label{thm_4v} 
Let $L \in \mathcal{S}_\R$ and $0<\omega_0<1/2$. 
Then $L(s)\not=0$ for $\Re(s)>\tfrac{1}{2}+\omega_0$ if and only if 
there exists a sequence $(\omega_n,\nu_n) \in \R_{>0} \times \Z_{>0}$, $n \geq 1$, such that  
\begin{enumerate}
\item $\omega_m < \omega_n$ if $m>n$ and $\omega_n \to \omega_0$ as $n \to \infty$, 
\item $\nu_n \omega_n d_L>1$, 
\item $H_{L}^{\omega_n,\nu_n}(t)$ extends to a Hamiltonian on $[0,\infty)$, that is, 
$\det(1 \pm \mathsf{K}_{L}^{\omega_n,\nu_n}[t])\not=0$ for every $t \geq 0$, and 
\item $\displaystyle{\lim_{t \to \infty}J_{L}^{\omega_n,\nu_n}(t;z,z)=0}$ for every fixed $z \in \C_+$.  
\end{enumerate}
\end{theorem}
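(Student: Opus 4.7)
The plan is to mimic the proof of Theorem \ref{thm_4} in Section \ref{section_7}, with the single substantive modification that Propositions \ref{prop_2_1} and \ref{prop_2_2} (which tie $E_L^{\omega,\nu} \in \overline{\mathbb{HB}}$ to full ${\rm GRH}(L)$) are replaced by Proposition \ref{prop_4_3}, which characterizes $E_L^{\omega,\nu} \in \mathbb{HB}$ for every $\omega>\omega_0$ in terms of the partial zero-free region $\Re(s)>\tfrac12+\omega_0$. The sequence in the conclusion will accumulate at $\omega_0$ rather than at $0$.

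For the necessity direction, assume $L(s)\neq 0$ on $\Re(s)>\tfrac12+\omega_0$. Choose any strictly decreasing sequence $\omega_n\to\omega_0$ with $\omega_n>\omega_0$, and choose $\nu_n\in\Z_{>0}$ with $\nu_n\omega_n d_L>1$; this immediately gives (1) and (2). By Proposition \ref{prop_4_3} we have $E_L^{\omega_n,\nu_n}\in\mathbb{HB}\subset\overline{\mathbb{HB}}$, so Proposition \ref{prop_4_4} yields the invertibility of $1\pm\mathsf{K}_L^{\omega_n,\nu_n}[t]$ for every $t\geq 0$; this secures (3) and the extension of the Hamiltonian to $[0,\infty)$. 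Finally, Theorem \ref{thm_3} applies to $E_L^{\omega_n,\nu_n}\in\mathbb{HB}$ to supply (4).

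For the sufficiency direction, assume the sequence $(\omega_n,\nu_n)$ exists. Condition (2) guarantees that $E_L^{\omega_n,\nu_n}$ satisfies (K1)--(K4) by Proposition \ref{prop_4_1}, and condition (3) supplies (K5) with $\tau=\infty$. Applying Theorem \ref{thm_5} (equivalently Theorem \ref{thm_2}) produces a canonical system on $[0,\infty)$ whose solution ${}^t(A_L^{\omega_n,\nu_n}(t,z),B_L^{\omega_n,\nu_n}(t,z))$ recovers $E_L^{\omega_n,\nu_n}$ at $t=0$. Combining condition (4) with Proposition \ref{prop_3_4}, one obtains $E_L^{\omega_n,\nu_n}(t,z)\in\overline{\mathbb{HB}}$ for every $t\geq 0$; specializing to $t=0$ gives $\xi_L(\tfrac12+\omega_n-iz)^{\nu_n}\in\overline{\mathbb{HB}}$, and taking positive $\nu_n$-th roots of the inequality $|E^\sharp/E|<1$ on $\C_+$ yields $E_L^{\omega_n,1}\in\overline{\mathbb{HB}}$. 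Now assume for contradiction that $L$ vanishes at some $s_0$ with $\Re(s_0)>\tfrac12+\omega_0$; since $\gamma_L$ is zero-free, $\xi_L(s_0)=0$ as well. Because $\omega_n\to\omega_0<\Re(s_0)-\tfrac12$, we may pick $n$ with $\omega_n<\Re(s_0)-\tfrac12$, so $z_0:=i(s_0-\tfrac12-\omega_n)\in\C_+$ is a zero of $E_L^{\omega_n,1}$, contradicting $E_L^{\omega_n,1}\in\overline{\mathbb{HB}}$.

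No step is a genuine obstacle, since all ingredients are already established in Sections \ref{section_3}--\ref{section_6}; the only point requiring care is the bookkeeping that ensures the threshold $\omega_0$ in the hypothesis $L(s)\neq 0$ on $\Re(s)>\tfrac12+\omega_0$ coincides exactly with the accumulation point of the sequence $\omega_n$, and this is handled cleanly by Proposition \ref{prop_4_3} together with the elementary observation that a zero of $\xi_L$ at $s_0$ with $\Re(s_0)>\tfrac12+\omega_0$ propagates, for all $n$ sufficiently large, to a zero of $\xi_L(\tfrac12+\omega_n-iz)$ located in $\C_+$.
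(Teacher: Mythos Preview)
Your argument is correct and is essentially the paper's own proof: the paper states that Theorem \ref{thm_4v} follows from Proposition \ref{prop_4_3} together with the argument of Section \ref{section_7}, and you have written out precisely that combination, replacing the role of Propositions \ref{prop_2_1}--\ref{prop_2_2} by Proposition \ref{prop_4_3} and letting the sequence $\omega_n$ accumulate at $\omega_0$ instead of $0$. The only cosmetic point is that when you pass from $L(s_0)=0$ to $\xi_L(s_0)=0$, you should also note that $s_0\notin\{0,1\}$ (which is automatic since $\Re(s_0)>\tfrac12$ and a zero of $L$ at $s=1$ forces $m_L=0$), but this does not affect the argument.
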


%
\section{Spectral realization of zeros of $A_L^{\omega,\nu}$ and $B_L^{\omega,\nu}$} \label{section_8} 
%

In this part, we mention that 
the zeros of $A_L^{\omega,\nu}$ and $B_L^{\omega,\nu}$ can be regarded as eigenvalues of 
self-adjoint extensions of a differential operator 
for $\omega > 1/2$ unconditionally and for $0<\omega\leq 1/2$ under ${\rm GRH}(L)$. 

%
\subsection{Multiplication by the independent variable} 
%

For $E \in \overline{\mathbb{HB}}$, 
the de Branges space $\mathcal{B}(E)$ has the unbounded operator $({\mathsf M},{\rm dom}({\mathsf M}))$  
consisting of multiplication by the independent variable $({\mathsf M}f)(z)=zf(z)$ endowed with the natural domain 
${\rm dom}({\mathsf M})=\{f\in \mathcal{B}(E) \,|\, zf(z) \in \mathcal{B}(E)\}$. 
The multiplication operator ${\mathsf M}$ is symmetric and closed, 
satisfies $\mathsf{M}(F^\sharp)=({\mathsf M}F)^\sharp$ for $F \in {\rm dom}(\mathsf{M})$ 
and has deficiency indices $(1,1)$ (\cite[Proposition 4.2]{KW99}). 
The operator $\mathsf{M}$ has no eigenvalues (\cite[Corollary 4.3]{KW99}). 

In general, ${\rm dom}({\mathsf M})$ has codimension at most one. 
Hereafter, we suppose that ${\rm dom}({\mathsf M})$ has codimension zero, that is, ${\rm dom}({\mathsf M})$ is dense in $\mathcal{B}(E)$. 
Then, all self-adjoint extensions ${\mathsf M}_\theta$ of $\mathsf{M}$ are parametrized by $\theta \in [0,\pi)$ 
and their spectrum consists of eigenvalues only. 
The self-adjoint extension $\mathsf{M}_\theta$ 
is described as follows.  
We introduce 
\begin{equation*}
S_\theta(z) = e^{i \theta}E(z) - e^{-i\theta}E^\sharp(z) 
\end{equation*}
for $\theta \in [0,\pi)$. 
Then, the domain of $\mathsf{M}_\theta$ is defined by 
\begin{equation*} 
{\rm dom}(\mathsf{M}_\theta) 
= \left\{\left.
G_F(z) = \frac{S_\theta(w_0)F(z)-S_\theta(z)F(w_0)}{z-w_0} ~\right|~
F(z) \in \mathcal{B}(E)
\right\}
\end{equation*}
and the operation is defined by 
\begin{equation*}
{\mathsf M}_\theta G_F(z) 
= z \, G_F(z) + F(w_0)S_\theta(z),
\end{equation*}
where $w_0$ is a fixed complex number with $S_\theta(w_0)\not=0$ 
and ${\rm dom}(\mathsf{M}_\theta) $ does not depend on the choice of $w_0$. 
The set 
\begin{equation*}
\left\{\left.
F_{\theta,\gamma}(z)=\frac{S_\theta(z)}{z-\gamma} 
~\right|~ S_\theta(\gamma)=0
\right\} 
\end{equation*}
forms an orthogonal basis of $\mathcal{B}(E)$, 
and each $F_{\theta,\gamma}$ is an eigenfunction of ${\mathsf M}_\theta$ 
with the eigenvalue $\gamma$: 
\begin{equation*}
{\mathsf M}_\theta F_{\theta,\gamma} = \gamma F_{\theta,\gamma}
\end{equation*}
(\cite[Proposition 6.1, Theorem 7.3]{KW99}). 
We have $S_{\pi/2}(z) = 2i \, A(z)$ and $S_0(z) = -2i \, B(z)$ by definition.  
Therefore, 
$\{ A(z)/(z-\gamma)\,|\,A(\gamma)=0\}$ and $\{ B(z)/(z-\gamma)\,|\,B(\gamma)=0\}$ 
are orthogonal basis of $\mathcal{B}(E)$ 
consisting of eigenfunctions of ${\mathsf M}_{\pi/2}$ and ${\mathsf M}_0$, respectively. 

%
\subsection{Transform to differential operators} 
%

Considering the isometric isomorphism of the Hilbert spaces 
\begin{equation*}
\mathcal{B}(E) \to \mathcal{K}(\Theta) \to \mathcal{V}_0 \subset L^2(0,\infty),
\end{equation*}
we define the differential operator $\mathsf{D}$ on $\mathcal{V}_0$ by 
\begin{equation*}
\mathsf{D} = \mathsf{F}^{-1}\mathsf{M}\mathsf{F}, \quad 
{\rm dom}(\mathsf{D}) 
= \mathsf{F}^{-1}\mathsf{M}_{\frac{1}{E}}({\rm dom}(\mathsf{M})) 
= \{ f \in \mathcal{V}_0 \,|\, z(\mathsf{F}f)(z) \in \mathcal{K}(\Theta) \},
\end{equation*}
where $\mathsf{M}_{\frac{1}{E}}$ is the operator of multiplication by $1/E(z)$.  
Then we have $(\mathsf{D} f)(x) = i \frac{d}{dx}f(x)$ for $f \in C^1(\R) \cap L^2(0,\infty)$.  
All self-adjoint extensions of $\mathsf{D}$ are given by
\begin{equation*}
\mathsf{D}_\theta = \mathsf{F}^{-1}\mathsf{M}_\theta\mathsf{F}, \quad 
{\rm dom}(\mathsf{D}_\theta) 
= \mathsf{F}^{-1}\mathsf{M}_{\frac{1}{E}}({\rm dom}(\mathsf{M}_\theta)), \quad \theta \in [0,\pi). 
\end{equation*}
The set 
\begin{equation*}
\{ f_{\theta,\gamma}=ie^{-i\theta}\mathsf{F}^{-1}\mathsf{M}_{\frac{1}{E}}F_{\theta,\gamma}\,|\, S_\theta(\gamma)=0\}
\end{equation*}
forms an orthogonal basis of $\mathcal{V}_0$ 
consisting of eigenfunctions $f_{\theta,\gamma}$ of $\mathsf{D}_\theta$ for eigenvalues $\gamma$, 
where $ie^{-i\theta}$ is the constant for the simplicity of $f_{\theta,\gamma}$. 
We have 
\begin{equation*}
f_{\theta,\gamma}(x) 
= e^{-i\gamma x} \left( \mathbf{1}_{[0,\infty)}(x) - e^{-2i\theta} \int_{0}^{x} K(y) e^{i\gamma y}\, dy \right) 
\end{equation*}
by the direct calculation of $\mathsf{F}^{-1}\mathsf{M}_{\frac{1}{E}}F_{\theta,\gamma}$. 
This formula suggests that the eigenfunction $f_{\theta,\gamma}$ 
is an adjustment of the ``eigenfunction'' $e^{-i\gamma x}$ of $i\frac{d}{dx}$ in $\mathcal{V}_0$. 

\begin{theorem} \label{thm_8}
Let $L \in \mathcal{S}_\R$, $(\omega,\nu) \in \R_{>0} \times \Z_{>0}$. 
Suppose that $E_L^{\omega,\nu} \in \overline{\mathbb{HB}}$ so that the de Branges space $\mathcal{B}(E_L^{\omega,\nu})$ is defined.  
Then ${\rm dom}(\mathsf{M})$ is dense in $\mathcal{B}(E_L^{\omega,\nu})$.  
\end{theorem}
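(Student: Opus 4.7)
The plan is to exploit the isomorphism $\mathcal{B}(E_L^{\omega,\nu})\cong\mathcal{V}_0$ of Lemma \ref{lem_6_1}, which conjugates $\mathsf{M}$ to the differential operator $\mathsf{D}$ described in the preceding subsection. Density of ${\rm dom}(\mathsf{M})$ in $\mathcal{B}(E_L^{\omega,\nu})$ is then equivalent to density of ${\rm dom}(\mathsf{D})$ in $\mathcal{V}_0$. Unpacking the condition $z(\mathsf{F}f)(z)\in\mathcal{K}(\Theta_L^{\omega,\nu})=H^2\cap\Theta_L^{\omega,\nu}\bar{H}^2$, an element $f\in\mathcal{V}_0$ belongs to ${\rm dom}(\mathsf{D})$ exactly when both $f$ and $\mathsf{K}_L^{\omega,\nu}f$ are absolutely continuous with $L^2$-derivative.

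First I would show that $\bigcup_{t>0}\mathcal{V}_t$ is dense in $\mathcal{V}_0$. From the proof of Lemma \ref{lem_6_2}, $\mathcal{V}_0\ominus\mathcal{V}_t$ coincides with the image of $L^2(-\infty,t)+\mathsf{K}_L^{\omega,\nu}(L^2(-\infty,t))$ inside $\mathcal{V}_0$, and because the kernel $K_L^{\omega,\nu}(x+y)$ is continuous and vanishes on $x+y<0$, the Hilbert--Schmidt norm of $\mathsf{K}_L^{\omega,\nu}[t]$ tends to $0$ as $t\to 0+$, so this orthogonal complement shrinks to $\{0\}$. Next, within each $\mathcal{V}_t$ with $t>0$, I would approximate $f\in\mathcal{V}_t$ by mollifications $f*\phi_n$ (with $\phi_n$ supported in $(-\varepsilon,\varepsilon)$ for some $\varepsilon<t$) followed by the bounded orthogonal projection ${\rm Id}-(1-\mathsf{K}_L^{\omega,\nu}[t]^2)^{-1}(\mathsf{P}_t-\mathsf{K}_L^{\omega,\nu}[t]\mathsf{P}_t\mathsf{K}_L^{\omega,\nu})$ onto $\mathcal{V}_t$, which is well defined because $1\pm\mathsf{K}_L^{\omega,\nu}[t]$ are invertible (Proposition \ref{prop_4_4}). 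For a compactly supported smooth approximant, integration by parts together with the local integrability of $\frac{d}{dx}K_L^{\omega,\nu}$ from Proposition \ref{prop_4_1} yields that both the approximant and its image under $\mathsf{K}_L^{\omega,\nu}$ have $L^2$-derivative, placing the result in ${\rm dom}(\mathsf{D})$.

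The main obstacle is that the projection onto $\mathcal{V}_t$, while bounded on $L^2$, involves the sharp cut-off $\mathsf{P}_t$ and generically does not preserve the $H^1$-regularity produced by mollification. Resolving this cleanly requires using the compactness of $\mathsf{K}_L^{\omega,\nu}[t]$ (Theorem \ref{thm_1}) together with the specific form of the resolvent to control the $H^1$-norm of the projected approximant uniformly as $n\to\infty$. An alternative route that sidesteps approximation inside $\mathcal{V}_t$ is to leverage the chain of de Branges subspaces $\mathcal{B}(E_L^{\omega,\nu}(t,\cdot))\subset\mathcal{B}(E_L^{\omega,\nu})$ from Theorem \ref{thm_6}: showing (via the transfer matrix of the canonical system, which is entire in $z$) that each such subspace is contained in ${\rm dom}(\mathsf{M})$ and that their union is dense as $t\to 0+$.
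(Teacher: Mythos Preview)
Your approach is entirely different from the paper's and, as you yourself flag, it does not close. The paper never touches $\mathcal{V}_t$ or approximation at all: it invokes the known criterion (\cite[Theorem 29]{MR0229011}, \cite{MR1855436}) that ${\rm dom}(\mathsf{M})$ fails to be dense in $\mathcal{B}(E)$ if and only if the Blaschke sum $\sum_{\Theta(\gamma)=0}\Im(\gamma)$ over the zeros of $\Theta=E^\sharp/E$ in $\C_+$ is finite. It then rules this out for $\Theta_L^{\omega,\nu}$ in two lines: finiteness would force $\Theta$ to be rational (contradicting the decay \eqref{s402}--\eqref{s403}), or would force infinitely many zeros of $E^\sharp$ to approach the real axis, whereas the functional equation of $\xi_L$ pins every such zero at height $\Im(z)\geq\omega$, making the sum diverge. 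No analysis on $\mathcal{V}_t$, no condition \eqref{s208}, no mollifiers.

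Your main route has the obstacle you name, and it is not merely technical: the projection onto $\mathcal{V}_t$ is built from $\mathsf{P}_t$, which introduces a jump at $t$ and destroys the $H^1$-regularity you just manufactured; compactness of $\mathsf{K}_L^{\omega,\nu}[t]$ gives no $H^1$-control. Your alternative route is also a gap: there is no reason for $\mathcal{B}(E_L^{\omega,\nu}(t,\cdot))\subset{\rm dom}(\mathsf{M})$. Membership in a smaller de Branges subspace says nothing about $zf(z)$ lying in the larger space~--- already in the Paley--Wiener model $E(z)=e^{-iaz}$, a generic $f$ in the band-limited subspace has $zf(z)\notin L^2$. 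The transfer matrix relates $(A(t,z),B(t,z))$ to $(A(z),B(z))$, not arbitrary elements of the subspace to ${\rm dom}(\mathsf{M})$. Finally, note that your argument tacitly imports \eqref{s208} (through Proposition~\ref{prop_4_1}, Proposition~\ref{prop_4_4}, Theorem~\ref{thm_6}), whereas Theorem~\ref{thm_8} is stated for all $(\omega,\nu)\in\R_{>0}\times\Z_{>0}$; the paper's zero-based argument needs no such hypothesis.
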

\begin{proof} Let $E=E_L^{\omega,\nu}$, $\Theta=E^\sharp/E$. 
The domain of $\mathsf{M}$ is not dense in $\mathcal{B}(E)$ 
if and only if 
\begin{equation} \label{s801}
\sum_{\gamma \in \C,\,\Theta(\gamma)=0} \Im(\gamma) < \infty.
\end{equation} 
by \cite[Theorem 29]{MR0229011} and \cite[Theorem A and Corollary 2]{MR1855436}. 
The condition \eqref{s801} 
means that the zeros of $E^\sharp$ appearing in the zeros of $\Theta$ 
(all of them in $\C_+$) are finitely many or tend to the real line 
sufficiently quickly (from the above) if ${\rm dom}(\mathsf{M})$ is not dense in $\mathcal{B}(E)$. 
If the number of such zeros is finite, 
$\Theta$ is a rational function.  
But it contradicts \eqref{s402} and \eqref{s403}. 
If $E^\sharp$ has infinitely many zeros appearing in the zeros of $\Theta$ 
and tend to the real line, 
the functional equation of $\xi_L$ implies that there exists a zero of $E^\sharp$ 
above the horizontal line $\Im(z)=\omega$. 
Hence, \eqref{s801} is impossible. 
\end{proof}

Theorem \ref{thm_8} suggests that the pair $(\mathcal{V}_0, \mathsf{D}_{\theta})$ is a P{\'o}lya-Hilbert space  
for $A_L^{\omega,\nu}$ if $\theta=\pi/2$ and for $B_L^{\omega,\nu}$ if $\theta=0$. 
Noting that $A_L^{\omega,1} \to \xi_L$ as $\omega \to 0$ if $\epsilon_L=+1$ 
and $B_L^{\omega,1} \to i\xi_L$ as $\omega \to 0$ if $\epsilon_L=-1$, 
the family of pairs $\{ (\mathcal{V}_0^{\omega,\nu(\omega)}, \mathsf{D}_{\theta}^{\omega,\nu(\omega)}) \}_{\omega>0}$ 
may be considered as a perturbation of the ``genuine P{\'o}lya-Hilbert space'' 
associated with $E(z)=\xi_L(s)+\xi_L^\prime(s)$, 
where $\nu(\omega) \to \infty$ as $\omega \to 0$ under \eqref{s208}. 

On the other hand, $\mathcal{V}_0$ is isomorphic to the quotient space $L^2(0,\infty)/\mathsf{K}(L^2(-\infty,0))$ 
by \cite[Lemma 4.1]{Su19_1}. 
This structure of $\mathcal{V}_0$ is similar to Connes' suggestion for the P{\'o}lya-Hilbert space as explained below.  

%
\subsection{Comparison with Connes' P{\'o}lya-Hilbert space} 
%

Connes~\cite{Connes99} suggests a candidate of the P{\'o}lya-Hilbert space  
by interpreting the critical zeros of the Riemann zeta function 
as an absorption spectrum as follows. 
Let $S({\Bbb R})_0$ be the subspace of the Schwartz space $S({\Bbb R})$ 
consisting of all even functions $\phi \in S({\Bbb R})$ 
satisfying $\phi(0)=(\mathsf{F}\phi)(0)=0$. 
For a function $\phi \in S({\Bbb R})_0$, 
the function $\mathsf{Z}\phi$ on $\R_+=(0,\infty)$ is defined by 
$(\mathsf{Z}\phi)(y) = y^{1/2}\sum_{n=1}^\infty \phi(ny)$. 
Then $\mathsf{Z}\phi$ is of rapid decay as $y \to +0$ and $y \to +\infty$. 
In particular, $\mathsf{Z}(S(\R)_0) \subset L^2(\R_+,dy/y)$. 
Then the ``orthogonal complement'' $L^2(\R_+,dy/y) \ominus \mathsf{Z}(S(\R)_0)$ 
is spanned by generalized eigenfunctions $y^{-i\gamma} (\log y)^k$, $0 \leq k < m_\gamma$, 
of the differential operator $iyd/dy$ 
attached to the critical zeros $1/2+i\gamma$ of the Riemann zeta function 
with multiplicity $m_\gamma$. That is,
$\displaystyle{ 
\left(iy d/dy, ~L^2(\R_+,dy/y) \ominus \mathsf{Z}(S(\R)_0)\right)
}$
forms a ``P{\'o}lya-Hilbert space''. 
The differential operator $iy d/dy$ may be regarded as 
the shift of the Hamiltonian 
$
(1/2)( y [-i\hbar\, d/dy] + [-i \hbar\, d/dy] y ) 
= - i \hbar (yd/dy + 1/2)
$
of the Berry--Keating model~\cite{BK99}. 

Rigorously, the above argument does not make sense, 
since $y^{-i\gamma} (\log y)^k$ are not members of $L^2(\R_+,dy/y)$ 
and $L^2(\R_+,dy/y)=\overline{\mathsf{Z}(S(\R)_0)}$. 
However, the above naive idea is justified by several manners (\cite{Connes99} and R. Meyer~\cite{MR2132868, MR2206883}), 
but some nice property such as the self-adjointness of the operator, the spectral realization of zeros, the Hilbert space structure 
is lost by known justification. 

Contrast with justifications so far, 
the family $\{ (\mathcal{V}_0^{\omega,\nu(\omega)}, \mathsf{D}_{\pi/2}^{\omega,\nu(\omega)}) \}_{\omega>0}$ 
justifies Connes' idea preserving the self-adjointness of the operator, the spectral realization of zeros 
and the Hilbert space structure by considering the perturbation family $A_\xi^{\omega,\nu}$ of $\xi$. 
\medskip

Major objects of the above naive model of Connes' idea  
correspond to objects attached to $(\mathcal{V}_0, \mathsf{D}_\theta)$ 
as follows under the changing of variables $y=e^x$: 
\begin{equation*} 
\aligned 
L^2(\R_+^\times,dy/y) ~ & \Leftrightarrow~L^2(0,\infty) \\
\mathsf{Z}(S(\R)_0) ~ & \Leftrightarrow~ \mathsf{K}(L^2(-\infty,0)) \\ 
{\rm Mellin}(\mathsf{Z}(S(\R)_0))=\zeta(\tfrac{1}{2}+s){\rm Mellin}(S(\R)_0) ~ & 
\Leftrightarrow~ \mathsf{F}(\mathsf{K}(L^2(-\infty,0))) = \Theta(z)\mathsf{F}(L^2(0,\infty))\\
iy\frac{d}{dy}~ & \Leftrightarrow~ \mathsf{D}_\theta \approx i\frac{d}{dx},
\endaligned 
\end{equation*} 
where ``${\rm Mellin}$'' means the usual Mellin transform 
and $\approx$ means ``is equal up to domain''. 

%
%
\section{Miscellaneous Remarks} \label{section_10} 
%
%

\noindent
(1) Concerning the size of Dirichlet coefficients $a_L(n)$, 
the polynomial bound $|a_L(n)| \ll n^A$ for some $A \geq 0$ is enough to prove Theorems \ref{thm_1}, \ref{thm_2} and \ref{thm_3}. 
In other words, the Ramanujan conjecture (S4) is not necessary to prove these theorems. 
Therefore, the method of Sections \ref{section_3} and \ref{section_4} 
about the construction of $H_L^{\omega,\nu}$ 
and ${}^{\rm t}[A_L^{\omega,\nu}(t,z),B_L^{\omega,\nu}(t,z)]$ 
can apply to more general $L$-functions, 
in particular, to $L$-functions associated 
with self-dual irreducible cuspidal automorphic representations of $GL_n(\mathbb{A}_\Q)$ 
with unitary central characters.
\medskip 

\noindent
(2) In contrast with the Ramanujan conjecture (S4), 
the Euler product (S5) is essential to the construction of $H_L^{\omega,\nu}$ 
and ${}^{\rm t}[A_L^{\omega,\nu}(t,z),B_L^{\omega,\nu}(t,z)]$ . 
In fact, the explicit formula of the kernel $K_L^{\omega,\nu}$ coming from (S5) 
was critical to proved that $E_L^{\omega,\nu}$ satisfies (K2) and (K3). 
It seems that it is not easy even to prove that $K_L^{\omega,\nu}$ is a function 
if we do not have (S5). 
It is an interesting problem to extend the construction of $H_L^{\omega,\nu}$ 
and ${}^{\rm t}[A_L^{\omega,\nu}(t,z),B_L^{\omega,\nu}(t,z)]$ 
to the class of $L$-data which is an axiomatic framework for $L$-functions introduced by A. Booker~\cite{Booker15}. 
Superficially, Booker's $L$-datum does not require the Euler product, 
but it is based on the Weil explicit formula of $L$-functions in the Selberg class. 
Roughly, the Weil explicit formula is a result of (S3) and (S5), 
but the theory of $L$-data suggests that 
the Weil explicit formula is more essential than (S3) and (S5). 
\medskip 

\noindent
(3)  By the Euler product (S5), $L \in \mathcal{S}$ is expressed as a product of local $p$-factors $L_p$, 
and often, there exists polynomial $F_p$ of degree at most $d_L$ for each prime $p$ 
such that $L_p(s)=1/F_p(p^{-s})$. 
The Ramanujan conjecture (S4) is understood 
as an analogue of the Riemann hypothesis for $F_p(p^{-s})$. 
The Hamiltonian $H_{L,p}$ attached to $F_p(p^{-s})$ 
was constructed in \cite{Su16} 
by using a way analogous to the method in Section \ref{section_3} 
if $F_p$ is a real self-reciprocal polynomial 
(for details, see \cite[Section 1, Section 7.6]{Su16}). 
It is an interesting problem to find a relation among 
the perturbation family of global Hamiltonians $H_L^{\omega,\nu}$, 
the family of local Hamiltonians $H_{L,p}$ 
and 
the conjectural Hamiltonian $H_L$ corresponding to $E(z)=\xi_L(s) + \xi_L^\prime(s)$. 
\medskip 

\noindent
(4) The method of \cite{Su16} for the construction of $H$ and ${}^{\rm t}[A(t,z), B(t,z)]$ for exponential polynomials $E$ 
is useful to observe $H_L^{\omega,\nu}$ by numerical calculation of computer for concrete given $L \in \mathcal{S}_\R$, 
because an entire function satisfies (K1) is approximated by exponential polynomials 
by approximating the Fourier integral by Riemann sums 
(cf. the final part of the introduction of \cite{Su16}). 
\medskip 

\noindent
(5) A sharp estimate of $K_L^{\omega,\nu}(x)$ for large $x>0$ is not necessary 
to prove the main results of this paper. 
In fact, we do not know 
the role of the behavior of $K_L^{\omega,\nu}(x)$ at $x=+\infty$ 
in the equivalent condition of Theorem \ref{thm_4}.  
\medskip

\noindent
(6) If we replace the condition (4) of Theorem \ref{thm_4} by
\begin{enumerate}
\item[(4')] $E_L^{\omega_n,\nu_n}(0)\not=0$ and 
$\displaystyle{\lim_{t \to \infty}(A_L^{\omega_n,\nu_n}(t,z), B_L^{\omega_n,\nu_n}(t,z))=(E_L^{\omega_n,\nu_n}(0),0)}$,
\end{enumerate}
we obtain a sufficient condition for ${\rm GRH}(L)$, since (4)' implies (4). 
It is ideal if this is also a necessary condition, but we have no plausible evidence to support the necessity of (4)'. 
On the contrary, 
it is not clear whether $\lim_{t \to +\infty}A_L^{\omega,\nu}(t,z)$ 
defines a functions of $z$ contrast with the fact 
$\lim_{t \to +\infty}B_L^{\omega,\nu}(t,z)=0$ 
under $\omega>1/2$ or ${\rm GRH}(L)$ (see \cite[Section 5.4]{Su19_1}).  
The limit behavior may be related to the arithmetic properties of $L(s)$ in a deep level, 
because we need information of all $q_L^{\omega,\nu}(n)$'s to understand it 
differ from the situation that we need only finitely many $q_L^{\omega,\nu}(n)$'s 
to understand $\mathsf{K}_L^{\omega,\nu}[t]$ for a finite range of $t \in \R$. 
We do not touch this problem further in this paper. 

%

%

\bigskip \noindent
\\
Department of Mathematics, 
School of Science, \\
Tokyo Institute of Technology \\
2-12-1 Ookayama, Meguro-ku, 
Tokyo 152-8551, JAPAN  \\
Email: {\tt msuzuki@math.titech.ac.jp}

\end{document}